\documentclass[11pt,a4paper,leqno]{amsart} 
\usepackage[all]{xy}

\usepackage[utf8]{inputenc}
\usepackage{amssymb}
\usepackage{amsmath}
\usepackage{amsthm}
\usepackage{varioref}
\usepackage{dsfont}
\usepackage{cases}
\usepackage{pict2e}
\usepackage{stmaryrd}
\usepackage{graphicx}
\usepackage{enumerate}

\RequirePackage{yhmath}

%%%%%
\DeclareMathOperator{\Span}{\textrm{Span}}

%% END Memoir customization

%MA MISE EN PAGE

%\usepackage{fullpage}
\usepackage[a4paper]{geometry}
\geometry{hmargin=2.5cm,vmargin=3cm}

%%% BEGIN DOCUMENT
\begin{document}

\title{Strongly $n$-supercyclic operators}
\author{ERNST Romuald}
\date{}

\maketitle

\theoremstyle{plain}
\newtheorem{theo}{Theorem }[section]
\newtheorem{lem}[theo]{Lemma}
\newtheorem{cor}[theo]{Corollary}
\newtheorem{prop}[theo]{Proposition}

\theoremstyle{definition}
\newtheorem{defi}[theo]{Definition}
\newtheorem{rem}[theo]{Remark}

\newtheorem{exe}[theo]{Example}
\newtheorem{cexe}[theo]{Counterexample}
\newtheorem*{nota}{Notation}
\newtheorem*{quest}{Question}

\newtheorem*{thmbfs}{Theorem Bourdon, Feldman, Shapiro}
\newtheorem*{thmf}{Theorem Feldman}
\newtheorem*{thmme}{Theorem \cite{Ernstnsupstrongnsupop}}
\newtheorem*{thmhc}{Theorem Hypercyclicity Criterion}
\newtheorem*{thmsc}{Theorem Supercyclicity Criterion}
\newtheorem*{thmct}{Theorem Circle Theorem}
\newtheorem*{thmas}{Theorem Ansari-Shkarin}

\newcommand{\claim}{\vspace{0.5cm}\noindent{\bf Claim:}\quad}
\newcommand{\R}{\mathbb R}
\newcommand{\Z}{\mathbb Z} 
\newcommand{\N}{\mathbb N}
\newcommand{\Q}{\mathbb Q}
\newcommand{\C}{\mathbb C}
\newcommand{\K}{\mathbb K}
\newcommand{\Pn}{\mathbb P}
\newcommand{\HC}{\mathcal{H}\mathcal{C}}
\newcommand{\SC}{\mathcal{S}\mathcal{C}}
\newcommand{\Oc}{\mathcal{O}}
\newcommand{\ES}{\mathcal{E}\mathcal{S}}
\newcommand{\epsi}{\varepsilon}

\begin{abstract}
In this paper, we are interested in the properties of a new class of operators, recently introduced by Shkarin, called strongly $n$-supercyclic operators. This notion is stronger than $n$-supercyclicity. We prove that such operators have interesting spectral properties and we give examples and counter-examples answering some questions asked by Shkarin.
\end{abstract}

\section{Introduction}

In what follows $X$ will denote completely separable Baire vector spaces over the field $\K=\R,\C$ and $T$ will be a bounded linear operator on $X$.
Since the last 1980's, density properties of orbits of operators have been of great interest for many mathematicians, particularly hypercyclic and cyclic operators for their link with the invariant subspace problem. Another reason explaining this interest is that they appear in many well-known classes of operators: weighted shifts, composition operators, translation operators,...
\begin{defi}
A vector $x\in\ X$ is said to be hypercyclic (resp. supercyclic) for $T$ if its orbit $\mathcal{O}(x,T):=\{T^{n}x,n\in\Z_{+}\}$ (resp. projective orbit $\{\lambda T^{n}x,n\in\Z_{+},\lambda\in\K\}$) is dense in $X$. The operator $T$ is said to be hypercyclic (resp. supercyclic) if it admits at least one hypercyclic (resp. supercyclic) vector.
\end{defi}
One may remove linearity in this definition, then under the same assumptions, $T$ is said to be universal. The definition of supercyclicity was introduced in 1974 by Hilden and Wallen \cite{Hilwal}. As one can see, this notion does not deal with orbits of vectors any more but with orbits of lines.
% \begin{defi}
% A vector $x\in\ X$ is said to be supercyclic for $T$ if its projective orbit $$\{\lambda T^{n}x,n\in\Z_{+},\lambda\in \K\}$$ is dense in $X$. The set of all supercyclic vectors for $T$ is denoted by $\mathcal{S}\mathcal{C}(T)$. The operator $T$ is called supercyclic if $\SC(T)\neq\emptyset$.
% \end{defi}
As we said before, these properties have been intensively studied and the reader may refer to \cite{Bay} and \cite{Grope} for a deep and complete survey. 
%One of the main ingredient providing such operators is the so called Hypercyclicity Criterion given by Carol Kitai in 1982 \cite{Kitai}.
%\begin{thmhc}
%Let $X$ be a separable Banach space and $T\in\mathcal{L}(X)$. $T$ satisfies the Hypercyclicity Criterion if there exist a strictly increasing sequence $(n_{k})_{k\in\Z_{+}}$, two dense sets $\mathcal{D}_{1},\mathcal{D}_{2}\subset X$ in $X$ and a sequence of maps $S_{n_{k}}:\mathcal{D}_{2}\to X$ such that:
%\begin{enumerate}[(a)]
%\item $T^{n_{k}}x\to 0$ for any $x\in\mathcal{D}_{1}$;\label{crit1}
%\item $S_{n_{k}}y\to 0$ for any $y\in\mathcal{D}_{2}$;\label{crit2}
%\item $T^{n_{k}}S_{n_{k}}y\to y$ for any $y\in\mathcal{D}_{2}$.\label{crit3}
%\end{enumerate}
%If $T$ satisfies the Hypercyclicity Criterion, then $T$ is hypercyclic.
%\end{thmhc}
%
%M. De La Rosa and J.C. Read \cite{Delarosaread} proved that this criterion is only a sufficient condition for hypercyclicity providing a counter-example to the necessary condition. Later, F. Bayart and \'E. Matheron \cite{Bay} or  S. Shkarin \cite{Shkachao} built on their ideas to provide new counter-examples. Actually, J. B\`es and A. Peris \cite{Besper} showed that any finite direct sum of an operator $T$ with itself is hypercyclic if and only if $T$ satisfies the Hypercyclicity Criterion. This characterisation will be of great use later.
%Similarly, 
One of the main ingredient providing such operators is the so called Supercyclicity Criterion given by H.N. Salas \cite{Salas}, which is only a sufficient condition for supercyclicity.
\begin{thmsc}
Let $X$ be a separable Banach space and $T\in\mathcal{L}(X)$. $T$ satisfies the Supercyclicity Criterion if there exist a strictly increasing sequence $(n_{k})_{k\in\Z_{+}}$, two dense sets $\mathcal{D}_{1},\mathcal{D}_{2}\subset X$ in $X$ and a sequence of maps $S_{n_{k}}:\mathcal{D}_{2}\to X$ such that:
\begin{enumerate}[(a)]
\item $\Vert T^{n_{k}}x\Vert\Vert S_{n_{k}}y\Vert\to 0$ for any $x\in\mathcal{D}_{1}$ and $y\in\mathcal{D}_{2}$;\label{scrit1}
\item $T^{n_{k}}S_{n_{k}}y\to y$ for any $y\in\mathcal{D}_{2}$.\label{scrit2}
\end{enumerate}
If $T$ satisfies the Supercyclicity Criterion, then $T$ is supercyclic.
\end{thmsc}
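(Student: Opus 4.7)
The standard route is to derive supercyclicity from a Birkhoff-style transitivity condition: for every pair of nonempty open sets $U, V \subset X$ there exist $n \in \Z_+$ and $\lambda \in \K$ with $\lambda T^{n}(U) \cap V \ne \emptyset$. In a separable Baire space this implies that the set $\SC(T)$ of supercyclic vectors is a dense $G_{\delta}$: for a countable basis $(V_j)$ one writes $\SC(T) = \bigcap_j \bigcup_{n \in \Z_+} T^{-n}\bigl(\bigcup_{\mu \ne 0} \mu V_j\bigr)$, and each open set in the intersection is dense as soon as transitivity holds. So the plan reduces to verifying the transitivity condition under hypotheses (a) and (b).

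Fix nonempty open $U, V \subset X$ and pick, by density, $x \in U \cap \mathcal{D}_1$ and $y \in V \cap \mathcal{D}_2$. For each index $k$ define
\[
z_k := x + \alpha_k\, S_{n_k} y,
\]
where the scalars $\alpha_k \in \K$ are yet to be chosen. Then
\[
\alpha_k^{-1}\, T^{n_k} z_k \;=\; \alpha_k^{-1}\, T^{n_k} x \;+\; T^{n_k} S_{n_k} y,
\]
whose second summand converges to $y$ by hypothesis (b). Hence, in order to obtain $z_k \to x \in U$ and $\alpha_k^{-1}T^{n_k} z_k \to y \in V$, it is enough to secure
\[
|\alpha_k|\,\|S_{n_k} y\| \to 0 \qquad \text{and} \qquad |\alpha_k|^{-1}\,\|T^{n_k} x\| \to 0.
\]

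The one key step is to balance both decay requirements through a single scalar: assuming both norms are positive, take
\[
\alpha_k := \bigl(\|T^{n_k}x\| / \|S_{n_k}y\|\bigr)^{1/2},
\]
so that each of the two quantities above equals $\sqrt{\|T^{n_k}x\|\,\|S_{n_k}y\|}$, which tends to zero exactly by hypothesis (a); the degenerate situations $T^{n_k}x=0$ or $S_{n_k}y=0$ are dispatched by taking $\alpha_k$ very small (resp.\ very large). For all sufficiently large $k$ one then has $z_k \in U$ and $\alpha_k^{-1} T^{n_k} z_k \in V$, so $(\alpha_k^{-1} T^{n_k})(U) \cap V \ne \emptyset$, proving the transitivity condition and hence supercyclicity.

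The only substantive obstacle is the simultaneous control of $\|\alpha_k S_{n_k}y\|$ and $\|\alpha_k^{-1} T^{n_k} x\|$ through one scalar; the product form of (a) is what makes this geometric-mean trick work, and the rest is routine density and Baire category.
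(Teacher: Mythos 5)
Your argument is correct and is the standard proof of the Supercyclicity Criterion: reduce to the supercyclic Birkhoff transitivity condition via Baire category, then verify transitivity with the perturbation $z_k = x + \alpha_k S_{n_k}y$ and the geometric-mean choice of $\alpha_k$ that balances the two decay requirements using hypothesis (a). The paper states this theorem without proof (citing Salas), so there is nothing to compare against, but your write-up matches the classical argument, including the correct (if terse) treatment of the degenerate cases $T^{n_k}x=0$ and $S_{n_k}y=0$.
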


This result is at the very heart of the theory. Indeed, only very few operators have been proved to be supercyclic without using this criterion.
Recently, some authors tried to generalise supercyclicity in a natural way. The first one is N. Feldman \cite{Felnsupop} at the beginning of the 2000's.
\begin{defi}
An operator $T$ is said to be $n$-supercyclic, $n\geq1$, if there is a subspace of dimension $n$ in $X$ with dense orbit.
\end{defi}
These operators have been studied in \cite{Bayhyp},\cite{Ber} and \cite{Bou} and \cite{Ernstnsupstrongnsupop} and many other articles.
 Feldman gave various classes of $n$-supercyclic operators and in particular the following:
\begin{exe}\label{exeFelnsupop}\cite{Felnsupop}
Let $n\in\mathbb{N}$. If $\{\Delta_{k},1\leq k\leq n\}$ is a collection of open disks,
$S_{k}=M_{z}$ on $L_{a}^{2}(\Delta_{k})$ for any $1\leq k\leq n$ and $S=\oplus_{k=1}^{n}S_{k}$, then $S^{\ast}$
is $n$-supercyclic.
\end{exe}
%Bayart and Matheron \cite{Bayhyp} characterised $n$-supercyclicity for the classical bilateral weighted shifts on $\ell^{p}(\Z)$, $1\leq p<\infty$:
%\begin{theo}\cite{Bayhyp}
%For a bilateral weighted shift on $\ell^{p}(\Z)$, $n$-supercyclicity is equivalent to supercyclicity.
%\end{theo}
%However, this is not the case for any class of $n$-supercyclic operators as one may notice with Example \ref{exeFelnsupop}. We noticed that Feldman gives sufficient conditions providing $n$-supercyclicity but he also supplies necessary conditions. The following theorem is concerned with the spectrum of $n$-supercyclic operators:
%\begin{thmct}{\cite{Felnsupop}}
% If $T$ is $n$-supercyclic, then there are $n$ circles $\Gamma_{i}=\{z:\vert z\vert=r_{i}\}$, $r_{i}\geq0$, $i=1,\ldots,n$, such that for every invariant subspace $\mathcal{M}$ of $T^{\ast}$, we have $\sigma(T^{\ast}\vert\mathcal{M})\cap(\cup_{i=1}^{n}\Gamma_{i})\neq\emptyset$. In particular, every component of the spectrum of $T$ intersects $\cup_{i=1}^{n}\Gamma_{i}$.
%\end{thmct}
%In particular, if one consider the case $n=1$, one recognises the Circle Theorem for supercyclic operators.
Then, in 2004, Bourdon, Feldman and Shapiro \cite{Bou} proved in the complex setting that $n$-supercyclicity is a purely infinite dimensional phenomenon.
\begin{theo}\label{theoboufelshap}
Let $n\geq2$. There is no $(n-1)$-supercyclic operator on $\C^{n}$. In particular, there is no $k$-supercyclic operator on $\C^{n}$ for any $1\leq k\leq n-1$.
\end{theo}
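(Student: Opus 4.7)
I argue by contradiction: suppose $T\in\mathcal{L}(\C^{n})$ admits an $(n-1)$-dimensional subspace $M$ with $\bigcup_{k\ge0}T^{k}M$ dense in $\C^{n}$. If $T$ is not invertible then $\bigcup_{k\ge0}T^{k}M\subseteq M\cup T(\C^{n})$, a union of two proper subspaces of $\C^{n}$, which is closed with empty interior and therefore not dense. So assume $T$ invertible and each $T^{k}M$ a complex hyperplane of $\C^{n}$.

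Write $M=w_{0}^{\perp}$ for a unit vector $w_{0}\in\C^{n}$ (with the standard Hermitian inner product); then $T^{k}M=(S^{k}w_{0})^{\perp}$ with $S:=(T^{*})^{-1}$, and the distance from $v\in\C^{n}$ to $T^{k}M$ equals $|\langle v,S^{k}w_{0}\rangle|/\|S^{k}w_{0}\|$. Density of the orbit of $M$ is therefore equivalent to
\[
\forall v\in\C^{n},\qquad \inf_{k\ge0}\frac{|\langle v,S^{k}w_{0}\rangle|}{\|S^{k}w_{0}\|}=0.
\]
Producing a single $v$ violating this will give the contradiction.

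To do so I would Jordan-decompose $S$: write $\C^{n}=\bigoplus_{i}V_{i}$ for the generalised eigenspace decomposition with eigenvalues $\mu_{i}\ne0$, so that $S|_{V_{i}}=\mu_{i}I+N_{i}$ with $N_{i}$ nilpotent, and expand $w_{0}=\sum_{i}v_{i}$. A direct computation yields $S^{k}w_{0}=\sum_{i}\mu_{i}^{k}P_{i}(k)$ where each $P_{i}$ is a $V_{i}$-valued polynomial of some degree $d_{i}$, with leading coefficient $c_{i}\ne0$ whenever $v_{i}\ne0$. Let $I$ collect the indices $i$ (with $v_{i}\ne 0$) which maximise first $|\mu_{i}|=:|\mu_{1}|$ and then $d_{i}=:d$, and set $\eta_{i}:=\mu_{i}/|\mu_{1}|$ (so $|\eta_{i}|=1$). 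Then
\[
\frac{S^{k}w_{0}}{|\mu_{1}|^{k}k^{d}}=\sum_{i\in I}\eta_{i}^{k}c_{i}+o(1),
\]
and the vectors $\{c_{i}\}_{i\in I}$ are linearly independent (they lie in distinct generalised eigenspaces).

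Finally I would exhibit the desired $v$. By linear independence, choose $v\in\C^{n}$ with $\langle v,c_{i_{0}}\rangle\ne0$ for some $i_{0}\in I$ and $\langle v,c_{i}\rangle=0$ for $i\in I\setminus\{i_{0}\}$. Then $|\langle v,\sum_{i\in I}\eta_{i}^{k}c_{i}\rangle|=|\langle v,c_{i_{0}}\rangle|$ is a positive constant, while $\|\sum_{i\in I}\eta_{i}^{k}c_{i}\|\le\sum_{i\in I}\|c_{i}\|$ is bounded. Feeding this into the asymptotic expansion, one checks that $|\langle v,S^{k}w_{0}\rangle|/\|S^{k}w_{0}\|$ stays above a fixed positive constant for all large $k$, so its infimum over $k\ge0$ is strictly positive---contradicting the reformulation. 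The main obstacle is the Jordan-form bookkeeping: isolating the dominant regime when several $\mu_{i}$ share modulus or several Jordan blocks share polynomial degree, and controlling the $o(1)$ uniformly enough that the lower bound survives. Fortunately the degenerate cases only shrink the orbit's asymptotic behaviour, so the same obstruction vector $v$ continues to work; the ``in particular'' statement then follows from the trivial remark that a $k$-supercyclic subspace, augmented to any $(n-1)$-dimensional superspace, still has dense orbit.
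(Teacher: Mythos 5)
The paper does not actually prove this statement: it is quoted verbatim from Bourdon--Feldman--Shapiro \cite{Bou}, so there is no internal proof to compare against. Taken on its own terms, your duality argument is a sound and essentially self-contained route to the hyperplane case, and the reduction of the ``in particular'' part to that case by enlarging a $k$-dimensional dense-orbit subspace to a hyperplane is correct, as is the dismissal of non-invertible $T$. The identities $T^{k}M=(S^{k}w_{0})^{\perp}$ with $S=(T^{*})^{-1}$ and $\mathrm{dist}(v,u^{\perp})=|\langle v,u\rangle|/\|u\|$ are right, the Jordan asymptotics $S^{k}w_{0}/(|\mu_{1}|^{k}k^{d})=\sum_{i\in I}\eta_{i}^{k}c_{i}+o(1)$ hold (geometric decay beats the polynomial factors for the discarded indices), and the $c_{i}$, $i\in I$, are indeed independent since they lie in distinct generalised eigenspaces. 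Your choice of $v$ then gives $\mathrm{dist}(v,T^{k}M)\geq |\langle v,c_{i_{0}}\rangle|/(2\sum_{i\in I}\|c_{i}\|)>0$ for all $k\geq K$.

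The one genuine flaw is the concluding step ``stays above a fixed positive constant for all large $k$, so its infimum over $k\geq0$ is strictly positive'': this is a non sequitur, because nothing in your construction prevents $v$ from lying in $T^{k}M$ for one of the finitely many $k<K$, in which case $\inf_{k\geq0}\mathrm{dist}(v,T^{k}M)=0$ and $v$ witnesses nothing. For instance, with $S=\bigl(\begin{smallmatrix}1&1\\0&1\end{smallmatrix}\bigr)$ and $w_{0}=(0,1)$ one has $I=\{i_{0}\}$, $c_{i_{0}}=(1,0)$, and your recipe permits $v=(1,0)\in M=T^{0}M$. The repair is routine but must be said: the admissible $v$ range over $W:=\bigcap_{i\in I\setminus\{i_{0}\}}c_{i}^{\perp}$, from which one must remove the subspaces $W\cap c_{i_{0}}^{\perp}$ and $W\cap T^{k}M$ for $k<K$. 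Each of these is a \emph{proper} subspace of $W$: the first by linear independence of the $c_{i}$, and the second because $W\subseteq T^{k}M$ would force $S^{k}w_{0}\in W^{\perp}=\Span\{c_{i}\}_{i\in I\setminus\{i_{0}\}}$, whereas the $V_{i_{0}}$-component of $S^{k}w_{0}$ equals $S^{k}v_{i_{0}}\neq0$ while every vector of that span has zero $V_{i_{0}}$-component. Since a complex vector space is never a finite union of proper subspaces, a valid $v$ exists, and with this patch the proof is complete.
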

Recently, the present author \cite{Ernstnsupstrongnsupop} proved that things were different in the real setting. These theorems recall the behaviour of supercyclic operators in finite dimensional vector spaces.
%Recently, the present author \cite{Ernstnsupstrongnsupop} proved that things were different in the real setting providing the following theorem:
%\begin{thmme}
% Let $n\geq2$. There is no $(\lfloor \frac{n+1}{2}\rfloor-1)$-supercyclic operators on $\R^n$. On the contrary, there exists $(\lfloor \frac{n+1}{2}\rfloor)$-supercyclic operators on $\R^n$.
%\end{thmme}
%These two last theorems recalls the behaviour of supercyclic operators in finite dimensional vector spaces.
Nevertheless, even though most of the supercyclic theorems have a $n$-supercyclic counterpart, some questions remain open. In particular, one may ask whether there exist a Birkhoff Theorem, a $n$-supercyclicity Criterion or even if the Ansari Theorem remains true for $n$-supercyclic operators. These questions are ``more difficult '' than the previous ones mainly because $X$ being a vector space, we are not considering a ``natural space`` for working on orbits of $n$-dimensional subspaces. In this spirit, in 2008, Shkarin \cite{Shkauniv} proposed the concept of strong $n$-supercyclicity requiring a stronger condition as its name suggests.
Let us first recall some well-known facts before coming to the definition of strong $n$-supercyclicity.
If $X$ has dimension greater than $n\in\N$, then one may define a topology on the $n$-th Grassmannian, denoted by $\Pn_{n}(X)$, which is the set of all $n$-dimensional subspaces of $X$. To do so, set $X_{n}$ the open set of all linearly independent $n$-tuples $x=(x_{1},\ldots,x_{n})\in X^{n}$ and endow $X_{n}$ with the topology induced by $X^{n}$.
Then set $\pi_{n}:X_{n}\rightarrow\Pn_{n}(X)$, $\pi_{n}(x)=\Span(x_{1},\ldots,x_{n})$ and define the topology on $\Pn_{n}(X)$ as being the coarsest for which $\pi_{n}$ is continuous and open. Now, let us move to the awaited definition:
\begin{defi}
Let $n\in\N$.
 An $n$-dimensional subspace of $X$ is said to be strongly $n$-supercyclic for $T$ if for every $k\in\Z_{+}$, $T^{k}(L)$ has dimension $n$ and if its orbit $$\mathcal{O}(L,T):=\{T^{n}(L),n\in\Z_{+}\}$$ is dense in $\Pn_{n}(X)$. The set of all strongly $n$-supercyclic subspaces for $T$ is denoted $\ES_{n}(T)$. The bounded linear operator $T$ is called strongly $n$-supercyclic if $\ES_{n}(T)\neq\emptyset$.
\end{defi}
\begin{rem}
 In this definition and all along this paper, we make no difference between $L$ as a subspace of $X$ and $L$ as an element of $\Pn_{n}(X)$.
\end{rem}
The main interest of this definition, compared to Feldman's one, is that it reduces to the universality of $T$ on the space $\Pn_{n}(X)$. From this point of view, the definition of strongly $n$-supercyclic operators seems quite natural.
Moreover, with this observation Shkarin \cite{Shkauniv} proved that strongly $n$-supercyclic operators do satisfy the Ansari property:
\begin{thmas}
Let $k,n\in\N$.
Then $\ES_{n}(T)=\ES_{n}(T^{k})$. In particular, $T$ is strongly $n$-supercyclic if and only if $T^{k}$ is strongly $n$-supercyclic.
\end{thmas}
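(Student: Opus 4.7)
The plan is to exploit the reformulation, emphasised just before the statement, of strong $n$-supercyclicity of $T$ as the universality of $T$ acting on the Grassmannian $\Pn_n(X)$, and to transport the classical Ansari connectedness argument to this setting.

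The inclusion $\ES_n(T^k)\subseteq\ES_n(T)$ is elementary. If $L\in\ES_n(T^k)$, the $T$-orbit of $L$ contains the dense $T^k$-orbit, hence is itself dense in $\Pn_n(X)$. For $0\le j<k$, the identity $T^kL=T^{k-j}(T^jL)$ combined with $\dim(T^kL)=n$ and $\dim(T^jL)\le n$ forces $\dim(T^jL)=n$; iterating, every iterate $T^mL$ has dimension $n$, so $L\in\ES_n(T)$.

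For the substantive inclusion $\ES_n(T)\subseteq\ES_n(T^k)$, let $L\in\ES_n(T)$ and split the $T$-orbit of $L$ according to residues modulo $k$:
$$\mathcal{O}(L,T)=\bigcup_{i=0}^{k-1}\mathcal{O}(T^iL,T^k).$$
Taking closures yields $\Pn_n(X)=\bigcup_{i=0}^{k-1}A_i$ with $A_i:=\overline{\mathcal{O}(T^iL,T^k)}$. The operator $T$ induces a map $\widetilde T$ on the open subset $\mathcal{U}\subset\Pn_n(X)$ of subspaces on which $T$ preserves dimension; $\widetilde T$ is continuous (and indeed open on $\mathcal U$) by the definition of the topology of $\Pn_n(X)$ via $\pi_n$, and by construction satisfies $\widetilde T(A_i\cap\mathcal U)\subseteq A_{(i+1)\bmod k}$. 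Since $\Pn_n(X)$ is a Baire space, some $A_{i_0}$ has nonempty interior. An Ansari-style argument, propagating this interior through the cyclic action of $\widetilde T$ and invoking the connectedness of $\Pn_n(X)$, should force every $A_i$ to coincide with $\Pn_n(X)$. In particular $A_0=\overline{\mathcal{O}(L,T^k)}=\Pn_n(X)$, so $L\in\ES_n(T^k)$.

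The main obstacle will be executing Ansari's connectedness step in $\Pn_n(X)$. In the classical hypercyclic setting one works on a connected Banach space on which $T$ is everywhere continuous, whereas here $\widetilde T$ is only partially defined and the sets $A_i$ may contain limit subspaces at which $T$ drops dimension. One must check that the crucial dichotomy underlying Ansari's proof (a point of $A_i$ separated from $A_{i-1}$ by an open set lies in the interior of $A_i$) survives this partial definition, and that $\pi_n$ being both continuous and open suffices to transport interiors between the $A_i$'s. The final connectedness argument rests on the path-connectedness of $\Pn_n(X)$, which is available because $X$ is infinite dimensional. Once these technical points are handled, the argument reduces to Ansari's original scheme applied to the induced dynamics on the Grassmannian.
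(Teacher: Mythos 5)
The first thing to say is that the paper does not prove this statement: it is quoted from Shkarin \cite{Shkauniv}, where it is obtained by applying a general Ansari-type theorem for universal elements of (non-linear) continuous maps to the dynamics induced by $T$ on $\Pn_n(X)$. So there is no internal proof to compare against, and your proposal has to stand on its own. Your easy inclusion $\ES_n(T^k)\subseteq\ES_n(T)$ is correct, including the dimension bookkeeping via $T^kL=T^{k-j}(T^jL)$.

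The substantive half, however, is not proved: the step you describe as ``an Ansari-style argument \ldots should force every $A_i$ to coincide with $\Pn_n(X)$'' \emph{is} Ansari's theorem, and you defer exactly that step. This is a genuine gap for concrete reasons. First, Baire category plus connectedness of the ambient space is not what makes Ansari's proof work even in the linear hypercyclic case; there one needs either the Bourdon--Feldman somewhere-dense theorem or a careful analysis of the sets $F_i=\overline{\mathcal{O}(T^ix,T^k)}$ (showing that $\{i:F_i=F_0\}$ is a subgroup of $\Z/k\Z$, studying $\bigcap_iF_i$, and exploiting connectedness of the set of hypercyclic vectors rather than of the whole space). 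You transport none of this to $\Pn_n(X)$: the natural connected set to use would be $\ES_n(T)$ itself, which is a dense $G_\delta$ by Proposition \ref{propbirkhoffnsup}, but you neither identify it nor prove it is connected. Second, the induced map $\widetilde T$ is only partially defined, and you have not shown it is open or has dense range on $\mathcal U$, so ``propagating the interior through the cyclic action'' is unjustified; the closures $A_i$ may well meet the locus where $T$ drops dimension, which is precisely where $\widetilde T$ is undefined, and your inclusion $\widetilde T(A_i\cap\mathcal U)\subseteq A_{(i+1)\bmod k}$ says nothing about interior points lying outside $\mathcal U$. As it stands the proposal is a plan rather than a proof; the efficient way to complete it is the one Shkarin actually follows, namely to verify the hypotheses of his general theorem on universality for the induced self-map of the Grassmannian rather than to re-run Ansari's argument by hand.
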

When he introduced the previous definition, Shkarin asked the question whether $n$-supercyclicity is equivalent to strong $n$-supercyclicity. Indeed, a positive answer to this question solve the Ansari property problem for $n$-supercyclic operators. In fact, the present author gave a negative answer to this question in \cite{Ernstnsupstrongnsupop} and we will construct some more counterexamples in the present paper. 
Since, \cite{Shkauniv} is very concise on strong $n$-supercyclicity, giving only the definition and the Ansari property and \cite{Ernstnsupstrongnsupop} is only concerned with the finite dimensional setting, the aim of this paper is to present a deeper study of strong $n$-supercyclicity.

\section{Preliminaries and equivalent conditions to strong $n$-supercyclicity}

A useful Theorem in linear dynamics is Birkhoff's Transitivity Theorem because it permits to consider the ''orbit of an open set'' instead of the orbit of a point and is the key point to prove the Hypercyclicity and Supercyclicity Criteria. This property is called topological transitivity.
Such a result would be a stable anchor for studying strongly $n$-supercyclic operators and this is the purpose of this section.
But first, we are going to expose general properties that we need in the sequel and which allow one to express strong $n$-supercyclicity in a more concrete and handy way.
The following property is easy to check and allows one to work on the space $X^{n}$ instead of the space $X_{n}$ which is less structured.
\begin{prop}
$X_{n}$ is dense in $X^{n}$.
\end{prop}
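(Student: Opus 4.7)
The goal is to show that any tuple $(x_1,\ldots,x_n)\in X^n$ can be approximated, in the product topology, by linearly independent tuples. Since the product topology on $X^n$ has a basis of boxes $U_1\times\cdots\times U_n$ (with each $U_i$ an open neighborhood of $x_i$ in $X$), it suffices to produce, inside any such box, a tuple $(y_1,\ldots,y_n)$ with $y_1,\ldots,y_n$ linearly independent. My plan is to build this tuple inductively, perturbing one coordinate at a time to avoid the span of the coordinates chosen so far.

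The key technical fact I would isolate first is the following: if $F$ is a proper linear subspace of a (Hausdorff) topological vector space $E$, then $E\setminus F$ is dense in $E$. The reason is that for any $x\in E$ and any neighborhood $U$ of $x$, picking any $v\in E\setminus F$, the scalar multiples $\lambda v$ lie in $U-x$ for all sufficiently small $\lambda\neq 0$, and then $x+\lambda v\in U$ cannot lie in $F$ (otherwise $\lambda v\in F$, hence $v\in F$, contradicting the choice of $v$). In our situation, each finite-dimensional span $\Span(y_1,\ldots,y_{i-1})$ is a proper subspace of $X$ because the standing assumption of the section is that $\dim X>n$, so this density holds at every stage.

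With that fact in hand, the induction is routine. Choose $y_1\in U_1$ nonzero (possible since $U_1$, being open and nonempty, cannot be contained in the proper subspace $\{0\}$). Given $y_1,\ldots,y_{i-1}$ linearly independent with $y_j\in U_j$, the subspace $F_{i-1}:=\Span(y_1,\ldots,y_{i-1})$ is a proper subspace of $X$, so $X\setminus F_{i-1}$ is dense, and one can pick $y_i\in U_i\cap(X\setminus F_{i-1})$. Then $y_i\notin F_{i-1}$ ensures that $y_1,\ldots,y_i$ remain linearly independent. After $n$ steps we obtain $(y_1,\ldots,y_n)\in X_n\cap(U_1\times\cdots\times U_n)$, establishing density.

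There is no real obstacle: the argument is a standard one, and the only point requiring a moment of care is the density of the complement of a proper subspace, which must be argued topologically rather than by a naive cardinality or measure argument, since $X$ is only assumed to be a completely separable Baire vector space. Everything else is a direct induction on the coordinate index.
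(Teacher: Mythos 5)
Your proof is correct; the paper states this proposition without any proof (``easy to check''), and your inductive perturbation argument --- choosing each $y_i\in U_i$ outside the proper subspace $\Span(y_1,\ldots,y_{i-1})$ and using the density of the complement of a proper subspace in a topological vector space --- is exactly the standard way to supply the details. One microscopic point: in your density lemma the implication ``otherwise $\lambda v\in F$'' tacitly uses $x\in F$, but that is the only case in which the perturbation is needed, so nothing is lost.
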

% EASY
% \begin{proof}
% 
% Let $x=(x_{1},\ldots,x_{n})\in X^{n}$ and $V_{1},\ldots,V_{n}$ be open neighbourhoods of $x_{1},\ldots,x_{n}$.\\
% It suffices to prove that there exists $y\in X_{n}\cap \left(V_{1}\times\cdots\times V_{n}\right)$.
% Let $p$ be the greater natural number such that the family $\{x_{1},\ldots,x_{p}\}$ is linearly independent. \\
% If $p=n$, it's trivial. If not, $\Span\{x_{1},\ldots,x_{p}\}$ is a subspace of dimension $p$, thus there exists $y_{p+1}\in V_{p+1}\setminus \Span\{x_{1},\ldots,x_{p}\}$ because $V_{p+1}$ is open in $X$ and $p<n$.
% \\Iterating this argument, one may produce a family $\{y_{p+1},\ldots,y_{n}\}$ such that:\\
% $(x_{1},\ldots,x_{p},y_{p+1},\ldots,y_{n})\in \left(V_{1}\times\cdots\times V_{n}\right)\cap X_{n}$.
% \end{proof}

\begin{rem}
 The following trivial fact is important in the sequel :
let $U$ be a non-empty open set in $X_{n}$ and $L$ be an $n$-dimensional subspace of $X$, then $(L\times\cdots\times L)\cap U\neq \emptyset$ $\Leftrightarrow$ $L\in\pi_{n}(U)$.
\end{rem}

Thanks to the link between $X_n$ and $X^n$, we are now able to characterise strong $n$-supercyclicity by density properties in $X^{n}$ rather than in $\Pn_n(X)$.
\begin{prop}\label{propbeq}
The following are equivalent:
\begin{enumerate}[(i)]
\item $T$ is strongly $n$-supercyclic;\label{beq1}
\item There exists a subspace $L$ of $X$ with dimension $n$ such that for every $i\in\Z_{+}$, $T^{i}(L)$ is $n$-dimensional and: \label{beq2}\\
 $\mathcal{B}:=\cup_{i=1}^{\infty}\pi_{n}^{-1}(T^{i}(L))$ is dense in $X^{n}$;
\item There exists a subspace $L$ of $X$ with dimension $n$ such that for every $i\in\Z_{+}$, $T^{i}(L)$ is $n$-dimensional and: \label{beq3}\\
 $\mathcal{E}:=\cup_{i=1}^{\infty}(T^{i}(L)\times\cdots\times T^{i}(L))$ is dense in $X^{n}$.
\end{enumerate}
\end{prop}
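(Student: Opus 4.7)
The plan is to establish (i) $\Leftrightarrow$ (ii) $\Leftrightarrow$ (iii) by unwinding the quotient topology on $\Pn_{n}(X)$. The three structural facts I would rely on are: $\pi_{n}$ is continuous, open and surjective by construction; $X_{n}$ is open and dense in $X^{n}$ (the previous proposition); and the trivial remark that for a non-empty open $U \subset X_{n}$ and an $n$-dimensional subspace $L$, $\pi_{n}^{-1}(L) \cap U \neq \emptyset$ iff $L \in \pi_{n}(U)$. A further useful observation is that, because $T^{i}(L)$ has dimension exactly $n$, any linearly independent $n$-tuple of elements of $T^{i}(L)$ automatically spans it, which yields the identity $\pi_{n}^{-1}(T^{i}(L)) = (T^{i}(L))^{n} \cap X_{n}$, and hence $\mathcal{B} = \mathcal{E} \cap X_{n}$.

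For (i) $\Leftrightarrow$ (ii), I would fix $L$ with each $T^{i}(L)$ of dimension $n$ and show that $\{T^{i}(L)\}_{i \geq 1}$ is dense in $\Pn_{n}(X)$ iff $\mathcal{B}$ is dense in $X_{n}$, then pass from $X_{n}$ to $X^{n}$. The implication from a dense orbit to dense $\mathcal{B}$ uses that $\pi_{n}$ is open: given a non-empty open $U \subset X_{n}$, $\pi_{n}(U)$ is open and non-empty, so it contains some $T^{i}(L)$, and the trivial remark gives $\pi_{n}^{-1}(T^{i}(L)) \cap U \neq \emptyset$. The converse uses that $\pi_{n}$ is continuous and surjective: given a non-empty open $V \subset \Pn_{n}(X)$, set $U := \pi_{n}^{-1}(V) \subset X_{n}$, which is open and non-empty; density of $\mathcal{B}$ in $X_{n}$ yields an $i$ with $\pi_{n}^{-1}(T^{i}(L)) \cap U \neq \emptyset$, and by the trivial remark, $T^{i}(L) \in \pi_{n}(U) = V$. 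Finally, since $X_{n}$ is open and dense in $X^{n}$, any subset of $X_{n}$ is dense in $X_{n}$ iff it is dense in $X^{n}$.

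For (ii) $\Leftrightarrow$ (iii), the inclusion $\mathcal{B} \subset \mathcal{E}$ immediately gives (ii) $\Rightarrow$ (iii). For the converse, if $\mathcal{E}$ is dense in $X^{n}$ then, because $X_{n}$ is open in $X^{n}$, any non-empty open $U \subset X_{n}$ meets $\mathcal{E}$; but any element of $\mathcal{E} \cap U$ lies in $\mathcal{E} \cap X_{n} = \mathcal{B}$, so $\mathcal{B}$ is dense in $X_{n}$, hence in $X^{n}$. No real obstacle is expected here: everything reduces to careful bookkeeping with the continuous/open/surjective nature of $\pi_{n}$, together with the dimension trick that makes $\mathcal{B}$ and $\mathcal{E}$ differ only on the nowhere dense set $X^{n} \setminus X_{n}$ of linearly dependent tuples.
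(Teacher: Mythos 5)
Your proposal is correct and takes essentially the same approach as the paper: both arguments rest on the openness, continuity and surjectivity of $\pi_{n}$, the density of the open set $X_{n}$ in $X^{n}$, and the identity $\mathcal{B}=\mathcal{E}\cap X_{n}$ (which the paper uses implicitly in the step (iii)$\Rightarrow$(ii)). The only cosmetic difference is that for (ii)$\Rightarrow$(i) the paper pushes the dense set $\mathcal{B}$ forward and computes $\pi_{n}(\mathcal{B})=\cup_{i}T^{i}(L)$, whereas you pull non-empty open subsets of $\Pn_{n}(X)$ back through $\pi_{n}$; the two are interchangeable.
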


\begin{proof}
 We first prove that $(\ref{beq1})\Leftrightarrow(\ref{beq2})$ and then $(\ref{beq2})\Leftrightarrow(\ref{beq3})$

$(\ref{beq1})\Rightarrow(\ref{beq2}):$

Let $x=(x_{1},\ldots,x_{n})\in\ X_{n}$, $M:=\pi_{n}(x)\in\mathbb{P}_{n}(X)$ and $V$ be a non-empty open neighbourhood of $x$ in $X_{n}$. Since $\pi_{n}$ is open, then $W:=\pi_{n}(V)$ is an open neighbourhood of $M$ in $\mathbb{P}_{n}(X)$.
Moreover, strong $n$-supercyclicity of $T$ implies that there exists an $n$-dimensional subspace $L$ of $X$ such that: $\{T^{n}(L)\}_{n\in\mathbb{N}}$ is dense in $\mathbb{P}_{n}(X)$, thus there exists $k\in\mathbb{N}$ such that:
$T^{k}(L)\in W$.
Hence, there exists $y\in V$ such that 
$\pi_{n}(y)=T^{k}(L)$ and then $y\in\pi_{n}^{-1}(T^{k}(L))\subset\mathcal{B}$.
This proves the density of $\mathcal{B}$ in $X_{n}$ and thus in $X^n$.

$(\ref{beq1})\Leftarrow(\ref{beq2}):$

Assume that $\mathcal{B}$ is dense in $X^{n}$, the fact that $\mathcal{B}\subset X_{n}$ yields that $\mathcal{B}$ is dense in $X_{n}$.
Since $\pi_{n}$ is continuous and onto, $\pi_{n}(\mathcal{B})$ is dense in $\mathbb{P}_{n}(X)$.
Moreover:
\belowdisplayskip=0pt
%\begin{align*}
%\pi_{n}(B)=&\pi_{n}(\cup_{i=1}^{\infty}\pi_{n}^{-1}(T^{i}(L)))\\
%&=\cup_{i=1}^{\infty}\pi_{n}(\pi_{n}^{-1}(T^{i}(L)))\\
%&=\cup_{i=1}^{\infty}T^{i}(L)\\
%\end{align*}
$$
\pi_{n}(B)=\pi_{n}(\cup_{i=1}^{\infty}\pi_{n}^{-1}(T^{i}(L)))=\cup_{i=1}^{\infty}\pi_{n}(\pi_{n}^{-1}(T^{i}(L)))=\cup_{i=1}^{\infty}T^{i}(L)$$
Thus, we proved that $\cup_{i=1}^{\infty}T^{i}(L)$ is dense in $\mathbb{P}_{n}(X)$ and $T$ is strongly $n$-supercyclic.

$(\ref{beq2})\Rightarrow(\ref{beq3}):$

By definition of $\pi_{n}$, for any $k\in\mathbb{N}$, $\pi_{n}^{-1}(T^{k}(L))\subset T^{k}(L)\times\cdots\times T^{k}(L)\subset X^{n}$, thus $\mathcal{B}\subset\mathcal{E}$ and then $\mathcal{E}$ is dense in $X^{n}$.

$(\ref{beq2})\Leftarrow(\ref{beq3}):$

Let $U$ be a non-empty open set of $X^{n}$. Since $X_{n}$ is an open and dense subset of $X^{n}$, then the set $X_{n}\cap U$ is also non-empty and open in $X^{n}$ and since $\mathcal{E}$ is dense in $X^{n}$, there exists $x\in\mathcal{E}\cap X_{n}\cap U= X_{n}\cap (\cup_{i=1}^{\infty}T^{i}(L)\times\cdots\times T^{i}(L))\cap U$.
Hence there is $k\in\mathbb{N}$ such that $x\in X_{n}\cap (T^{k}(L)\times\cdots\times T^{k}(L))\cap U$, so $\pi_{n}(x)=T^{k}(L)$ and then $x\in\mathcal{B}\cap U$.
\end{proof}

\begin{rem}\label{remsommdir}
 In particular, (\ref{beq3}) above allows us to notice that if $T=T_1\oplus\cdots\oplus T_n$ on $X=E_1\oplus\cdots\oplus E_n$ is strongly $k$-supercyclic, then for any $i\in\{1,\ldots,n\}$, $T_i$ is strongly $k_{i}$-supercyclic where $k_{i}=\min(\dim(E_i),k)$.
\end{rem}

The last proposition makes possible to characterise the strongly $n$-supercyclic subspaces for an operator and shows that $\ES_n(T)$ is either empty or a G$_{\delta}$ subset of $\Pn_{n}(X)$.
Let us denote by $(V_{j})_{j\in\Z_{+}}$ an open basis of $X$.
\begin{prop}\label{propbespnsup}

$\mathcal{E}S_{n}(T)=\cap_{(j_{1},\ldots,j_{n})\in\mathbb{N}^{n}}\cup_{i\in\mathbb{N}}\pi_{n}((T\oplus\cdots\oplus T)^{-i}(V_{j_{1}}\times\cdots\times V_{j_{n}})\cap X_{n})$

\end{prop}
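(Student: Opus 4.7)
The plan is to show the pointwise equality: an $n$-dimensional subspace $L\subseteq X$ lies in $\mathcal{E}S_{n}(T)$ if and only if it lies in the right-hand side. Two preliminary facts drive everything. First, since $\pi_{n}$ is open and continuous and $\{V_{j_{1}}\times\cdots\times V_{j_{n}}\}_{(j_{1},\ldots,j_{n})\in\mathbb{N}^{n}}$ is a basis of $X^{n}$, the family $\{\pi_{n}(V_{j_{1}}\times\cdots\times V_{j_{n}}\cap X_{n})\}_{(j_{1},\ldots,j_{n})\in\mathbb{N}^{n}}$ is a basis of the topology of $\Pn_{n}(X)$. Second, the Remark recalled above gives, for any open $U\subseteq X_{n}$, the equivalence $L\in\pi_{n}(U)\Leftrightarrow L^{n}\cap U\neq\emptyset$; applied to $U:=(T\oplus\cdots\oplus T)^{-i}(V_{j_{1}}\times\cdots\times V_{j_{n}})\cap X_{n}$, this reads: $L\in\pi_{n}(U)$ iff there exist linearly independent $x_{1},\ldots,x_{n}\in L$ with $T^{i}x_{k}\in V_{j_{k}}$ for each $k$.

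For the inclusion $\mathcal{E}S_{n}(T)\subseteq$ RHS, fix $L\in\mathcal{E}S_{n}(T)$ and $(j_{1},\ldots,j_{n})\in\mathbb{N}^{n}$. By density of the orbit of $L$ in $\Pn_{n}(X)$ together with the basis observation, some $i$ satisfies $T^{i}L\in\pi_{n}(V_{j_{1}}\times\cdots\times V_{j_{n}}\cap X_{n})$. Because $T^{i}|_{L}$ is an isomorphism onto $T^{i}L$ (dimension condition), pulling back via $(T^{i}|_{L})^{-1}$ a linearly independent representative $(y_{1},\ldots,y_{n})\in V_{j_{1}}\times\cdots\times V_{j_{n}}$ of $T^{i}L$ produces linearly independent $x_{k}\in L$ with $T^{i}x_{k}=y_{k}\in V_{j_{k}}$, and this witnesses $L\in\pi_{n}(U)$.

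The reverse inclusion is the delicate part, and is the main obstacle: the RHS does not explicitly encode the dimension condition $\dim T^{i}L=n$, which must therefore be recovered from the density itself. Let $L$ belong to the RHS; then $L$ is $n$-dimensional by construction. By the second preliminary, the RHS is equivalent to the density in $X^{n}$ of $\mathcal{S}:=\bigcup_{i}(T\oplus\cdots\oplus T)^{i}(L^{n}\cap X_{n})$, and hence to the density of $\mathcal{S}\cap X_{n}$ in the open dense subset $X_{n}\subseteq X^{n}$. A direct check shows that $(T\oplus\cdots\oplus T)^{i}(L^{n}\cap X_{n})\cap X_{n}\neq\emptyset$ exactly when $T^{i}|_{L}$ is injective, i.e.\ $\dim T^{i}L=n$. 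Since $\ker T^{i}|_{L}$ is non-decreasing in $i$, the set of such good indices forms an initial segment of $\mathbb{Z}_{+}$; if it were finite, then $\mathcal{S}\cap X_{n}$ would be contained in a finite union of proper closed finite-dimensional subspaces of $X^{n}$ (namely the $(T^{i}L)^{n}$), contradicting its density (we may assume $\dim X>n$, the statement being otherwise trivial). Therefore $\dim T^{i}L=n$ for every $i$, which identifies $(T\oplus\cdots\oplus T)^{i}(L^{n}\cap X_{n})$ with $(T^{i}L)^{n}\cap X_{n}$; density of $\bigcup_{i}(T^{i}L)^{n}$ in $X^{n}$ follows, and Proposition~\ref{propbeq}(\ref{beq3}) concludes $L\in\mathcal{E}S_{n}(T)$.
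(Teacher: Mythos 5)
Your proof is correct and follows essentially the same route as the paper: translate membership in the right-hand side, via the Remark on $\pi_{n}$, into the non-emptiness of $L^{n}\cap X_{n}\cap(T\oplus\cdots\oplus T)^{-i}(V_{j_{1}}\times\cdots\times V_{j_{n}})$, and then into the density statement of Proposition \ref{propbeq}. The one genuine addition is your handling of the reverse inclusion: the paper writes the argument as a chain of equivalences and never explains why a subspace $L$ belonging to the right-hand side satisfies $\dim T^{i}(L)=n$ for every $i$, whereas you recover this from the density itself (the good indices form an initial segment, and a finite union of the proper closed subspaces $(T^{i}L)^{n}$ cannot contain a dense subset of $X^{n}$). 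That step is sound and makes the proof complete where the paper's version is slightly elliptic.
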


\begin{proof}
Let $L\in\mathcal{E}S_{n}(T)$, according to Proposition \ref{propbeq}, this means that $\cup_{i=1}^{\infty}T^{i}(L)\times\cdots\times T^{i}(L)$ is dense in $X^{n}$.
Then using the open basis this is equivalent to saying that:
$$\forall (j_{1},\cdots,j_{n})\in\mathbb{N}^{n},\exists i\in\mathbb{N} :\ (T^{i}(L)\times\cdots\times T^{i}(L))\cap (V_{j_{1}}\times\cdots\times V_{j_{n}}) \neq\emptyset.$$
Thus, $X_{n}$ being a dense open set of $X^n$, this can be re-written:
$$\forall (j_{1},\cdots,j_{n})\in\mathbb{N}^{n},\exists i\in\mathbb{N} :\ X_{n}\cap (L\times\cdots\times L)\cap (\oplus_{k=1}^{n} T)^{-i}(V_{j_{1}}\times\cdots\times V_{j_{n}})\neq\emptyset.$$ 
Finally, applying $\pi_{n}$ to the previous line gives the relation we expect:
$$L\in\ \cap_{(j_{1},\ldots,j_{n})\in\mathbb{N}^{n}}\cup_{i\in\mathbb{N}}\pi_{n}((T\oplus\cdots\oplus T)^{-i}(V_{j_{1}}\times\cdots\times V_{j_{n}})\cap X_{n}).$$
\end{proof}

At that point, it is possible to give a similar result as Birkhoff's Transitivity Theorem for the strong $n$-supercyclicity setting:
\begin{prop}\label{propbirkhoffnsup}
The following are equivalent:
\begin{enumerate}[(i)]
\item $T$ is strongly $n$-supercyclic;\label{birkhoffnsup4}
\item $\forall U\subset\mathbb{P}_{n}(X),\forall V\subset X^{n}$ open and non-empty, $\exists i\in\mathbb{N} :\ (\oplus_{k=1}^{n} T)^{i}(\pi_{n}^{-1}(U))\cap V\neq\emptyset$.\label{birkhoffnsup5}
\end{enumerate}
In particular, if $T$ is strongly $n$-supercyclic, then $\mathcal{E}S_{n}(T)$ is a G$_{\delta}$ dense subset of $\Pn_{n}(X)$.
\end{prop}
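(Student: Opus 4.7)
The plan is to rewrite both sides of the equivalence in terms of the sets
\[A_{j_1,\ldots,j_n}:=\bigcup_{i\in\N}\pi_n\bigl((T\oplus\cdots\oplus T)^{-i}(V_{j_1}\times\cdots\times V_{j_n})\cap X_n\bigr)\subset \Pn_n(X)\]
already encountered in Proposition~\ref{propbespnsup}. Each $A_{j_1,\ldots,j_n}$ is open in $\Pn_n(X)$ because $(T\oplus\cdots\oplus T)^{-i}(V_{j_1}\times\cdots\times V_{j_n})\cap X_n$ is open in $X_n$ and $\pi_n$ is open by construction of the topology on $\Pn_n(X)$. A straightforward chase of definitions will then show that, restricted to $V=V_{j_1}\times\cdots\times V_{j_n}$, condition (\ref{birkhoffnsup5}) is literally the statement $U\cap A_{j_1,\ldots,j_n}\neq\emptyset$, and since any open $V\subset X^n$ is a union of such basic rectangles, (\ref{birkhoffnsup5}) is equivalent to the density of every $A_{j_1,\ldots,j_n}$ in $\Pn_n(X)$.

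For (\ref{birkhoffnsup4}) $\Rightarrow$ (\ref{birkhoffnsup5}) I will fix $L\in\ES_n(T)$ and first upgrade to the observation that $T^i(L)\in\ES_n(T)$ for every $i\in\N$: each $T^j(T^iL)=T^{i+j}L$ is $n$-dimensional, and the orbit $\{T^mL\}_{m\geq i}$ is obtained from the dense orbit $\{T^mL\}_{m\geq 0}$ by removing finitely many points, which preserves density because $\Pn_n(X)$ is Hausdorff without isolated points. Given then a non-empty open $U\subset\Pn_n(X)$, density of the orbit of $L$ produces $i\in\N$ with $T^iL\in U$; Proposition~\ref{propbespnsup} applied to $T^iL\in\ES_n(T)$ forces $T^iL\in A_{j_1,\ldots,j_n}$, whence $U\cap A_{j_1,\ldots,j_n}\neq\emptyset$ and every $A_{j_1,\ldots,j_n}$ is dense.

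For (\ref{birkhoffnsup5}) $\Rightarrow$ (\ref{birkhoffnsup4}) together with the last claim I will use a Baire-category argument in $\Pn_n(X)$. Since $\pi_n:X_n\to\Pn_n(X)$ is a continuous open surjection and $X_n$ is open in the Baire space $X^n$, a standard pull-back argument shows that $\Pn_n(X)$ is itself Baire. Under (\ref{birkhoffnsup5}) each $A_{j_1,\ldots,j_n}$ is open and dense, so by Baire's theorem the countable intersection
\[\ES_n(T)=\bigcap_{(j_1,\ldots,j_n)\in\N^n}A_{j_1,\ldots,j_n}\]
is a dense $G_\delta$ in $\Pn_n(X)$; in particular $\ES_n(T)\neq\emptyset$, yielding (\ref{birkhoffnsup4}). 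Combining this Baire step with the already-proved implication (\ref{birkhoffnsup4}) $\Rightarrow$ (\ref{birkhoffnsup5}) delivers the final density statement of the proposition.

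The most delicate part I anticipate is the subclaim of the forward direction that $T^iL\in\ES_n(T)$ whenever $L\in\ES_n(T)$. Its only non-trivial input is that $\Pn_n(X)$ has no isolated points, so that truncating a dense orbit at a finite index preserves density; verifying this, together with the Baire property of $\Pn_n(X)$, amounts to a small but essential topological detour through the open quotient map $\pi_n$.
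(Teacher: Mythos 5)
Your proof is correct and follows essentially the same route as the paper's: both reduce the statement to the density of the open sets $A_{j_1,\ldots,j_n}$ coming from Proposition \ref{propbespnsup}, use the no-isolated-points/orbit argument for the forward implication, and a Baire-category argument in $\Pn_n(X)$ for the converse and for the final $G_\delta$-density claim. The only difference is that you make explicit two points the paper leaves implicit, namely that $\Pn_n(X)$ is a Baire space (via the open quotient map $\pi_n$) and the translation of condition (ii) into density of each $A_{j_1,\ldots,j_n}$.
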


\begin{proof}
Let $L\in\mathcal{E}S_{n}(T)$. Since $X$ does not have any isolated point, $\Pn_{n}(X)$ does not have any either and then $\mathcal{O}(L,T)\subset\mathcal{E}S_{n}(T)$.
Thus, $\mathcal{E}S_{n}(T)$ is either empty or dense and is also a G$_{\delta}$ with Proposition \ref{propbespnsup}.
In particular, $T$ is strongly $n$-supercyclic if and only if $\mathcal{E}S_{n}(T)$ is dense in $\Pn_{n}(X)$, and using the characterisation of $\mathcal{E}S_{n}(T)$ from Proposition \ref{propbespnsup}, this means that for all non-empty open set $U\in\Pn_{n}(X)$ and any $(j_{1},\ldots,j_{n})\in\mathbb{N}^{n}$, there exists $i\in\mathbb{N}$ such that $$\pi_{n}((T\oplus\cdots\oplus T)^{-i}(V_{j_{1}}\times\cdots\times V_{j_{n}})\cap X_{n})\cap U\neq\emptyset$$ where $(V_{j})_{j\in\Z_{+}}$ is an open basis of $X$.\\
Thanks to the relation $\pi_{n}^{-1}(U)\cap X_{n}=\pi_{n}^{-1}(U)$, this can be re-written: for all non-empty open set $U\in\Pn_{n}(X)$, for any $(j_{1},\ldots,j_{n})\in\mathbb{N}^{n}$, there exists $i\in\mathbb{N}$ such that $$(T\oplus\cdots\oplus T)^{-i}(V_{j_{1}}\times\cdots\times V_{j_{n}})\cap \pi_{n}^{-1}(U)\neq\emptyset$$ and the proposition is proved.
\end{proof}

Thanks to these results, we are now able to prove the existence of strongly $n$-supercyclic operators. This is a first class of examples:
\begin{cor}\label{corpremex}
 Suppose that $T$ satisfies the Supercyclicity Criterion, then $T$ is strongly $n$-supercyclic for every $n\in\N$.
\end{cor}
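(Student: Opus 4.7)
The approach is to verify the Birkhoff-type transitivity condition of Proposition \ref{propbirkhoffnsup}: for every non-empty open $U\subset\Pn_{n}(X)$ and every non-empty open $V\subset X^{n}$, I must produce $(u_{1},\ldots,u_{n})\in\pi_{n}^{-1}(U)$ and $i\in\N$ with $(T^{i}u_{1},\ldots,T^{i}u_{n})\in V$. Since products form a basis of the topology of $X^{n}$, I may assume $V=V_{1}\times\cdots\times V_{n}$. By density of $\mathcal{D}_{1}$, I fix a linearly independent tuple $(x_{1},\ldots,x_{n})\in\mathcal{D}_{1}^{n}\cap\pi_{n}^{-1}(U)$ and, by density of $\mathcal{D}_{2}$, vectors $v_{j}\in\mathcal{D}_{2}\cap V_{j}$.

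The main computational step is the classical ``geometric mean'' scaling trick from the Supercyclicity Criterion, adapted to the $n$-tuple setting. Setting $a_{k}=\max_{j}\|T^{n_{k}}x_{j}\|$ and $b_{k}=\max_{j}\|S_{n_{k}}v_{j}\|$, condition (\ref{scrit1}) yields $a_{k}b_{k}\to 0$, so scalars $\lambda_{k}>0$ can be chosen (for instance $\lambda_{k}=\sqrt{b_{k}/a_{k}}$ when $a_{k}>0$) with $\lambda_{k}a_{k}\to 0$ and $b_{k}/\lambda_{k}\to 0$. I then define
$$u_{j}^{(k)}:=\lambda_{k}x_{j}+S_{n_{k}}v_{j},$$
and condition (\ref{scrit2}) immediately gives $T^{n_{k}}u_{j}^{(k)}=\lambda_{k}T^{n_{k}}x_{j}+T^{n_{k}}S_{n_{k}}v_{j}\to 0+v_{j}=v_{j}\in V_{j}$, so the image condition $T^{n_{k}}u_{j}^{(k)}\in V_{j}$ holds for all $k$ large enough.

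The main obstacle is the domain condition: the vectors $u_{j}^{(k)}$ need not be close to $x_{j}$ at all (so the tuple is not a small perturbation in $X^{n}$), yet I need $(u_{1}^{(k)},\ldots,u_{n}^{(k)})\in\pi_{n}^{-1}(U)$. The key observation is that $\lambda_{k}$ multiplies all generators simultaneously and therefore does not move the span: $\Span(u_{1}^{(k)},\ldots,u_{n}^{(k)})=\Span(x_{1}+\lambda_{k}^{-1}S_{n_{k}}v_{1},\ldots,x_{n}+\lambda_{k}^{-1}S_{n_{k}}v_{n})$. Because $\|\lambda_{k}^{-1}S_{n_{k}}v_{j}\|\leq b_{k}/\lambda_{k}\to 0$, the right-hand tuple converges in $X^{n}$ to $(x_{1},\ldots,x_{n})\in\pi_{n}^{-1}(U)\cap X_{n}$; openness of $\pi_{n}^{-1}(U)\cap X_{n}$ then forces this tuple, and hence $(u_{1}^{(k)},\ldots,u_{n}^{(k)})$ itself, to lie in $\pi_{n}^{-1}(U)$ for large $k$. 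Combining both sides, $i=n_{k}$ witnesses the transitivity condition for all sufficiently large $k$, and Proposition \ref{propbirkhoffnsup} concludes. The conceptual content is that strong $n$-supercyclicity does not allow rescaling the targets $v_{j}$ (as ordinary supercyclicity does), so the entire scaling factor $\lambda_{k}$ must be absorbed on the initial data side; this is legitimate precisely because a uniform rescaling of an $n$-tuple leaves its class in $\Pn_{n}(X)$ untouched.
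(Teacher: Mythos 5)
Your proof is correct, and it reaches the same target as the paper --- the transitivity condition of Proposition \ref{propbirkhoffnsup} --- but by a genuinely different route. The paper argues in two lines: by B\`es--Peris \cite{Besper}, the Supercyclicity Criterion is equivalent to supercyclicity of $\oplus_{k=1}^{n}T$ on $X^{n}$; the supercyclic Birkhoff transitivity then yields $i$ and $\lambda\in\K^{*}$ with $(\oplus_{k=1}^{n} T^{i})(\lambda\,\pi_{n}^{-1}(U))\cap V\neq\emptyset$, and the scalar is absorbed via the invariance $\lambda\,\pi_{n}^{-1}(U)=\pi_{n}^{-1}(U)$. You instead unwind the criterion by hand: the vectors $u_{j}^{(k)}=\lambda_{k}x_{j}+S_{n_{k}}v_{j}$ with the geometric-mean choice of $\lambda_{k}$ are exactly those used in the standard proof that the criterion implies supercyclicity of the $n$-fold direct sum, so in effect you re-prove the relevant direction of B\`es--Peris inside your argument. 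Both proofs ultimately rest on the same observation --- that $\pi_{n}^{-1}(U)$ is invariant under simultaneous nonzero rescaling of a tuple --- but yours makes explicit where the scalar goes (into the span of the initial tuple, since the targets $v_{j}$ cannot be rescaled) and is self-contained, at the cost of length and of a couple of degenerate cases you should dispose of explicitly (e.g.\ $\max_{j}\Vert S_{n_{k}}v_{j}\Vert=0$, where your formula gives $\lambda_{k}=0$ and the division by $\lambda_{k}$ breaks down; any fixed positive $\lambda_{k}$ works there since the span is then exactly $\Span(x_{1},\ldots,x_{n})$). The paper's version is shorter and modular but uses \cite{Besper} as a black box.
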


\begin{proof}

We are going to check the equivalent condition given by Proposition \ref{propbirkhoffnsup}.
Bès and Peris have shown in \cite{Besper} that $T$ satisfies the Supercyclicity Criterion if and only if $(\oplus_{k=1}^{n}T)$ is supercyclic on $X^{n}$ for any $n\in\mathbb{N}$.
Let $n\in\N$, by the supercyclic version of Birkhoff Theorem, for any non-empty open sets $V,W$ in $X^n$, there exists $i\in\Z_{+}$ and $\lambda\in\K^*$ so that $(\oplus_{i=1}^nT^i)(\lambda W)\cap V\neq\emptyset$.
Let $U$ be a non-empty open set in $\Pn_n(X)$ and $V$ be a non-empty open set in $X^n$. Then, $\pi^{-1}_n(U)$ is non-empty and open in $X^n$ by definition of $\pi_n$ and for any $\lambda\in\K^*$, $\lambda\pi^{-1}_n(U)=\pi^{-1}_n(U)$.
Set $W:=\pi^{-1}_n(U)$ and use the supercyclic Birkhoff Theorem with sets $V$ and $W$, then there exists $i\in\Z_{+}$ such that $(\oplus_{i=1}^nT^i)(\pi^{-1}_n(U))\cap V\neq\emptyset$. This proves that $T$ is strongly $n$-supercyclic.
\end{proof}

Actually, one may deduce the following corollary. It is straightforward with the above corollary but we state it to justify the following remark.

\begin{cor}

Let $1\leq n<\infty$ and $X_{1},\ldots,X_{n}$ be Banach spaces and for any $i\in\{1,\ldots,n\}$, $T_{i}\in\mathcal{L}(X_{i})$. Assume that the $T_i$ satisfy the Hypercyclicity Criterion with respect to the same sequence $\{n_{k}\}_{k\in\mathbb{N}}$. Then $(\oplus_{i=1}^{n}T_{i})$ is strongly $n$-supercyclic on $X=\oplus_{i=1}^{n}X_{i}$.
\end{cor}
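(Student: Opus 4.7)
The plan is to reduce the statement to the preceding Corollary \ref{corpremex}, by verifying that the direct sum $T := \bigoplus_{i=1}^{n}T_{i}$ satisfies the Supercyclicity Criterion on $X = \bigoplus_{i=1}^{n} X_{i}$; once this is done, Corollary \ref{corpremex} immediately yields that $T$ is strongly $k$-supercyclic for every $k\in\N$, in particular for $k=n$.

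First I would show that $T$ itself satisfies the Hypercyclicity Criterion along the common sequence $\{n_{k}\}_{k\in\N}$. For each $i\in\{1,\ldots,n\}$, let $\mathcal{D}_{1}^{(i)},\mathcal{D}_{2}^{(i)}\subset X_{i}$ be the dense sets and $S_{n_{k}}^{(i)}:\mathcal{D}_{2}^{(i)}\to X_{i}$ the maps witnessing the Hypercyclicity Criterion for $T_{i}$. Set
$$\mathcal{D}_{j} \;:=\; \mathcal{D}_{j}^{(1)}\times\cdots\times\mathcal{D}_{j}^{(n)}\quad(j=1,2),$$
which is dense in $X$, and define $S_{n_{k}}:\mathcal{D}_{2}\to X$ by $S_{n_{k}}(y_{1},\ldots,y_{n}):=(S_{n_{k}}^{(1)}y_{1},\ldots,S_{n_{k}}^{(n)}y_{n})$. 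Routine coordinatewise checks give $T^{n_{k}}x\to 0$ for every $x\in\mathcal{D}_{1}$, $S_{n_{k}}y\to 0$ for every $y\in\mathcal{D}_{2}$, and $T^{n_{k}}S_{n_{k}}y\to y$ for every $y\in\mathcal{D}_{2}$, so $T$ satisfies the Hypercyclicity Criterion.

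Next, I would observe that the Hypercyclicity Criterion trivially implies the Supercyclicity Criterion: if $\|T^{n_{k}}x\|\to 0$ and $\|S_{n_{k}}y\|\to 0$ separately, then certainly $\|T^{n_{k}}x\|\,\|S_{n_{k}}y\|\to 0$, which is condition (a) of the Supercyclicity Criterion; condition (b) is unchanged.

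Finally, applying Corollary \ref{corpremex} to the operator $T$ concludes the proof. The only point to be watchful of is the hypothesis that all the $T_{i}$ share the \emph{same} sequence $\{n_{k}\}$: without this, the direct sum argument in the first step would break down, since the Hypercyclicity Criterion would fail to be witnessed simultaneously on all coordinates. This is also exactly the obstacle that prevents one from weakening the hypothesis to merely requiring each $T_{i}$ to satisfy the Hypercyclicity Criterion separately.
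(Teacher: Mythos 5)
Your proposal is correct and follows exactly the route the paper intends: the paper gives no explicit proof, stating only that the corollary "is straightforward with the above corollary," i.e.\ one checks that $\oplus_{i=1}^{n}T_{i}$ satisfies the Supercyclicity Criterion (via the Hypercyclicity Criterion along the common sequence $\{n_k\}$) and then invokes Corollary \ref{corpremex}. Your coordinatewise verification and the remark on why the common sequence is essential are precisely the details being elided.
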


\begin{rem}
 One could be interested in trying to replace the Hypercyclicity Criterion above with the Supercyclicity Criterion. Feldman already proved that such operators are $n$-supercyclic \cite{Felnsupop}. We will see later in Theorem \ref{theospecerclefns} that Feldman's Theorem does not always provide strongly $n$-supercyclic operators because their spectral properties are different. In particular, this contradicts the affirmation in \cite{Shkauniv} that the operators constructed by Feldman in Example \ref{exeFelnsupop} are strongly $n$-supercyclic.
\end{rem}

\begin{rem}
As people did it for hypercyclicity, we can deduce from Proposition \ref{propbirkhoffnsup} a strong $n$-supercyclicity Criterion.
Unfortunately, this criterion is equivalent to the Hypercyclicity Criterion.
\end{rem}

\section{Some spectral properties}

It is a well-known fact for hypercyclic and supercyclic operators that the point spectrum of their adjoint is very small, in fact it counts at most one element for supercyclic operators and none for hypercyclic ones. 
Bourdon, Feldman and Shapiro proved that this was also the case for $n$-supercyclic operators, and therefore for strongly $n$-supercyclic operators, giving the following theorem:
\begin{thmbfs}{\cite{Bou}}
Suppose that $T : X \to X$ is a continuous linear operator and $n$ is a positive
integer. If $T^*$ has $n + 1$ linearly independent eigenvectors, then T is not $n$-supercyclic.
\end{thmbfs}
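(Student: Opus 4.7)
The plan is to reduce to a statement about diagonal matrices on $\K^{n+1}$. Suppose for contradiction that $L$ is an $n$-dimensional subspace of $X$ with $\cup_{k}T^{k}L$ dense in $X$, and that $\phi_1,\ldots,\phi_{n+1}\in X^*$ are linearly independent eigenvectors of $T^*$ with eigenvalues $\lambda_1,\ldots,\lambda_{n+1}$. I would package these into a single continuous linear map $V:X\to\K^{n+1}$ defined by $V(x)=(\phi_1(x),\ldots,\phi_{n+1}(x))$. Linear independence of the $\phi_i$ forbids any nontrivial relation $\sum a_i\phi_i=0$, so $V$ is surjective; and the eigenvalue relation yields the intertwining $V\circ T = D\circ V$, where $D=\mathrm{diag}(\lambda_1,\ldots,\lambda_{n+1})$.

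Setting $W:=V(L)$, the intertwining gives $V(T^{k}L)=D^{k}W$. Since $V$ is continuous and surjective, the preimage of any nonempty open subset of $\K^{n+1}$ is nonempty and open in $X$, so density of $\cup_k T^k L$ transfers to density of $\cup_{k}D^{k}W$ in $\K^{n+1}$. But $W$ is the image of the $n$-dimensional space $L$, so $\dim W\leq n$; in particular $W$ is a proper subspace of $\K^{n+1}$. I would then derive a contradiction by showing $\cup_{k}D^{k}W$ cannot be dense in $\K^{n+1}$ when $W$ is proper and $D$ is diagonal. Enlarging $W$ to a hyperplane $\{z:c\cdot z=0\}$ (with $c\neq 0$) only enlarges the union, so it is enough to treat this case. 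If some $\lambda_{i_0}$ vanishes, then $D^{k}$ kills the $i_0$-th coordinate for $k\geq 1$, placing $D^{k}W$ inside the proper subspace $\{z_{i_0}=0\}$, so $\cup_{k}D^{k}W\subseteq W\cup\{z_{i_0}=0\}$ is a closed proper subset of $\K^{n+1}$, contradicting density.

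In the remaining case all $\lambda_i$ are nonzero, and $D^{k}W$ is the hyperplane with normal $D^{-k}c=(c_1\lambda_1^{-k},\ldots,c_{n+1}\lambda_{n+1}^{-k})$. I would pick an index $i^*$ minimizing $|\lambda_j|$ among those $j$ with $c_j\neq 0$, and test the single point $y=e_{i^*}$: the distance from $y$ to $D^{k}W$ equals $|c_{i^*}\lambda_{i^*}^{-k}|/\|D^{-k}c\|$, and the estimate $\|D^{-k}c\|^{2}=\sum_{j:\,c_j\neq 0}|c_j|^{2}|\lambda_j|^{-2k}\leq |\lambda_{i^*}|^{-2k}\|c\|^{2}$ yields a uniform lower bound $|c_{i^*}|/\|c\|>0$ for this distance. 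Consequently $y$ has a neighborhood disjoint from every $D^{k}W$, contradicting density. The main obstacle is precisely this final uniform estimate: without the choice of $i^*$ as the index of smallest $|\lambda_j|$ on the support of $c$, the denominator $\|D^{-k}c\|$ can grow faster than the numerator and the bound collapses, so the delicate point is to match the dominant growth rate of the normal vector with a test coordinate in which $c$ is ``active''.
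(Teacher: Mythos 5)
Your argument is correct. Note that the paper does not prove this statement at all --- it is quoted from \cite{Bou} --- so there is no internal proof to compare against; but your reduction is essentially the one used by Bourdon, Feldman and Shapiro: push the orbit forward through the map $V=(\phi_1,\ldots,\phi_{n+1})$, which is surjective by linear independence and intertwines $T$ with the diagonal operator $D$, and then show that a proper subspace of $\K^{n+1}$ cannot have dense orbit under $D$. All the steps check out: surjectivity of $V$ guarantees that images of dense sets are dense; the case of a zero eigenvalue traps the orbit in the union of two proper closed subspaces; and in the invertible case your choice of $i^{*}$ as the index of smallest $\vert\lambda_j\vert$ on the support of $c$ gives $\Vert D^{-k}c\Vert\leq\vert\lambda_{i^*}\vert^{-k}\Vert c\Vert$ and hence the uniform lower bound $\vert c_{i^*}\vert/\Vert c\Vert$ on $\mathrm{dist}(e_{i^*},D^{k}H)$, valid for every $k\geq 0$, which kills density. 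The only points worth a half-sentence each in a written version are that the distance formula $\vert\sum_j a_j y_j\vert/\Vert a\Vert$ for the hyperplane $\{\sum_j a_jw_j=0\}$ holds verbatim over $\C$, and that a union of two proper subspaces of $\K^{n+1}$ is never all of $\K^{n+1}$.
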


One can ask whether this result can be improved for strongly $n$-supercyclic operators. The following theorem shows that it is not the case. Moreover, it points out that we can choose the eigenvalues of their adjoint.
\begin{theo}\label{theonpasn-1}
Let $X$ be a complex Banach space.
 Let $\lambda_1,\ldots,\lambda_p\in\C^*$, $m_1,\ldots,m_p\in\N$ and $T$ be a bounded linear operator on $X$ and define $n=\sum_{i=1}^{p}m_i$.
Then the following assertions are equivalent:
\begin{enumerate}[(i)]
\item $S:=\oplus_{i=1}^{m_1}\lambda_1 Id\oplus\cdots\oplus_{i=1}^{m_p}\lambda_p Id\oplus T$ is strongly $n$-supercyclic on $\C^n\oplus X$;\label{tnpasn-11}
\item $\oplus_{i=1}^{m_1}\frac{T}{\lambda_{1}}\oplus\cdots\oplus_{i=1}^{m_p}\frac{T}{\lambda_{p}}$ is hypercyclic.\label{tnpasn-12}
\end{enumerate}
Moreover, in that case, $\sigma_p(S^*)=\{\lambda_1,\ldots,\lambda_p\}$ and for any $i\in\{1,\ldots,p\}$, $\lambda_i$ has multiplicity $m_i$.
\end{theo}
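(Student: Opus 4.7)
The plan is to identify the generic $n$-dimensional subspaces of $\C^n\oplus X$ with graphs of linear maps $\C^n\to X$, and to translate the strong $n$-supercyclicity condition on $S$ into a hypercyclicity condition on $X^n$. I would write $S=D\oplus T$ with $D=\bigoplus_{i=1}^p \lambda_i\,\mathrm{Id}_{\C^{m_i}}$ on $\C^n$, and let $e_1,\ldots,e_n$ be the canonical basis of $\C^n$ with $De_j=\lambda_{(j)}e_j$. Call an $n$-dimensional $L\subset\C^n\oplus X$ a \emph{graph} if the projection $\pi_1|_L\colon L\to\C^n$ is an isomorphism; equivalently $L=L_A:=\{(v,Av):v\in\C^n\}$ for a unique $A\in\mathcal{L}(\C^n,X)\cong X^n$. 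First I would verify that the set $\mathcal{G}$ of graphs is open and dense in $\Pn_n(\C^n\oplus X)$, and that $A\mapsto L_A$ is a homeomorphism between $X^n$ and $\mathcal{G}$; this is a transversality argument that exploits the openness of $\pi_n$ and the fact that the first-coordinate projections of an $n$-tuple in $X_n$ are linearly independent in $\C^n$ on an open dense set.

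The key computation is that since every $\lambda_i\neq 0$, the operator $D$ is invertible, and a direct check gives $S^k(L_A)=L_{T^k A D^{-k}}$; in particular the orbit of any graph remains in $\mathcal{G}$ and each iterate is automatically $n$-dimensional. Under the identification $A\leftrightarrow(Ae_1,\ldots,Ae_n)=:(x_1,\ldots,x_n)\in X^n$, one gets $T^k A D^{-k}e_j=\lambda_{(j)}^{-k}T^k x_j=(T/\lambda_{(j)})^k x_j$, so the $X^n$-parameter of $S^k(L_A)$ equals $\tilde T^k(x_1,\ldots,x_n)$ where $\tilde T:=\bigoplus_{j=1}^n T/\lambda_{(j)}$ is precisely the operator in (ii). Because $\mathcal{G}$ is open and dense in $\Pn_n$, density of the orbit of $L_A$ in $\Pn_n$ is equivalent to density in $\mathcal{G}$, which via the homeomorphism is equivalent to hypercyclicity of $(x_1,\ldots,x_n)$ for $\tilde T$. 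For (\ref{tnpasn-11})$\Rightarrow$(\ref{tnpasn-12}), Proposition \ref{propbirkhoffnsup} tells me $\ES_n(S)$ is a dense $G_\delta$, hence meets the open dense set $\mathcal{G}$, producing some $L_A\in\ES_n(S)$ from which $(Ae_1,\ldots,Ae_n)$ is hypercyclic for $\tilde T$; the converse is immediate by defining $A$ from a hypercyclic vector and observing that $L_A\in\ES_n(S)$.

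For the moreover part, strong $n$-supercyclicity implies $n$-supercyclicity, so the Bourdon--Feldman--Shapiro theorem limits $S^*$ to at most $n$ linearly independent eigenvectors. Writing $S^*=D\oplus T^*$ (identifying $(\C^n)^*\cong\C^n$ so that $D^*=D$), the diagonal summand alone already contributes exactly $n$ linearly independent eigenvectors, distributed as $\lambda_i$ with geometric multiplicity $m_i$; any nonzero eigenvector $\phi$ of $T^*$ would yield an additional independent eigenvector $(0,\phi)$ of $S^*$ and break the bound. Hence $\sigma_p(T^*)=\emptyset$, and $\sigma_p(S^*)=\{\lambda_1,\ldots,\lambda_p\}$ with multiplicities $m_i$. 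I expect the main obstacle to lie in setting up the graph picture cleanly --- checking carefully that $\mathcal{G}$ is open and dense in $\Pn_n(\C^n\oplus X)$ and that $X^n\to\mathcal{G}$ is a homeomorphism for the Grassmannian topology --- after which both the equivalence and the spectral statement reduce to straightforward bookkeeping.
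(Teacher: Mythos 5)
Your proposal is correct, and it reorganizes the argument around a different principle than the paper does. The paper works directly with the characterisation of Proposition \ref{propbeq}: for (\ref{tnpasn-12})$\Rightarrow$(\ref{tnpasn-11}) it takes the subspace $M=\Span\{(e_i,y_i)\}$ (which is exactly your $L_A$) and verifies by hand that $\bigcup_k S^k(M)\times\cdots\times S^k(M)$ is dense in $(\C^n\oplus X)^n$, choosing coefficients $\mu_{i,j}=x_{i,j}/\lambda_j^k$ and perturbing to an invertible matrix; for the converse it shows via Bourdon--Feldman--Shapiro that any strongly $n$-supercyclic $M$ projects onto all of $\C^n$ (hence is a graph), then extracts hypercyclicity of $R$ through the matrices $A^{(n_k)}\to Id$ and their inverses. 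You instead establish once and for all that the graph set $\mathcal{G}$ is open and dense in $\Pn_n(\C^n\oplus X)$, invariant under the induced action (since $S^k(L_A)=L_{T^kAD^{-k}}$), and that the induced action on $\mathcal{G}$ is topologically conjugate via $A\mapsto L_A$ to $\tilde T=\oplus_j T/\lambda_{(j)}$ on $X^n$; both implications then become formal (you get a graph in $\ES_n(S)$ from Baire category rather than from the projection argument, which is equally valid). What your route buys is the elimination of all the $\epsi$-management and matrix inversion; what it costs is that the continuity of the inverse map $\mathcal{G}\to X^n$ for the quotient topology on the Grassmannian --- i.e., that $L_{A_k}\to L_A$ forces $A_k\to A$ --- is precisely where the paper's explicit computations with $A^{(n_k)}$ and $(A^{(n_k)})^{-1}$ have been hidden (compare Lemma \ref{leminfcercle}, which the paper proves for exactly this kind of purpose elsewhere). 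That verification is routine but not free, and you are right to flag it as the main obstacle; once it is done, your treatment of the equivalence and of the spectral statement (BFS forbids an $(n+1)$-st independent eigenvector of $S^*$, so $\sigma_p(T^*)=\emptyset$) is complete and agrees with the paper's.
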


\begin{proof}
For the sake of convenience we denote by $\lambda_1,\ldots,\lambda_n$ the complex values we want to realise as eigenvalues of $S^*$ counted with multiplicity and let $R=\frac{T}{\lambda_{1}}\oplus\cdots\oplus\frac{T}{\lambda_{n}}$ be hypercyclic by hypothesis.
Assume that the equivalence is already proved, then the definition of $S$ implies that $\sigma_{p}(S^*)=\{\lambda_{1},\ldots,\lambda_n\}$ because  $\sigma_{p}(T^*)=\emptyset$.\\
According to the Theorem of Bourdon, Feldman and Shapiro stated above, $S$ is not strongly $k$-supercyclic for every $k<n$.

$\rhd$We begin with (\ref{tnpasn-12})$\Rightarrow$(\ref{tnpasn-11}):\\
Assume that $R$ is hypercyclic and that $(y_1,\ldots,y_n)\in X^n$ is hypercyclic for $R$, let $\{(e_{i,1},\ldots,e_{i,n})\}_{1\leq i\leq n}$ be the canonical basis of $\C^n$ and set $M=\Span\{(e_{i,1},\ldots,e_{i,n},y_i)\}_{1\leq i\leq n}$.
We are going to show that $M$ is strongly $n$-supercyclic for $S$ i.e. $\cup_{k\in\Z_{+}} \underbrace{S^k(M)\times\cdots\times S^{k}(M)}_{n\text{ times}}$ is dense in $\left(\C^n\oplus X\right)^n$. This reduces to prove:
$$\bigcup_{\substack{
            k\in\Z_{+}\\ \mu_{i,j}\in\C
           }}\left\{\sum_{i=1}^{n}\mu_{1,i}(\oplus_{j=1}^{n}\lambda_{j}^{k}e_{i,j}\oplus T^k y_i)\oplus\cdots\oplus\sum_{i=1}^{n}\mu_{n,i}(\oplus_{j=1}^{n}\lambda_{j}^{k}e_{i,j}\oplus T^k y_i)\right\}$$
is dense in $\left(\C^n\oplus X\right)^n.$\\
For this purpose, let $z=(z_{i,j})_{1\leq i\leq n,1\leq j\leq n+1}\in (\C^n\oplus X)^n$ and $\epsi>0$. We have to find $k$ and $(\mu_{i,j})_{i,j}$ in order to approach $z$ from a distance at most $\epsi$.\\
Remark that if one defines $\mu_{i,j}=\frac{z_{i,j}}{\lambda_{j}^{k}}$ we have:
$$\sum_{i=1}^{n}\mu_{1,i}(\oplus_{j=1}^{n}\lambda_{j}^{k}e_{i,j})\oplus\cdots\oplus\sum_{i=1}^{n}\mu_{n,i}(\oplus_{j=1}^{n}\lambda_{j}^{k}e_{i,j})=(z_{1,1},\ldots,z_{1,n},\ldots,z_{n,1},\ldots,z_{n,n}).$$
This leads to two cases. Either $\det((z_{i,j})_{1\leq i,j\leq n})\neq0$ and we set $x_{i,j}=z_{i,j}$ for every $1\leq i,j\leq n$. Or $\det((z_{i,j})_{1\leq i,j\leq n})=0$ and since $GL_{n}(\C)$ is dense in $M_{n}(\C)$ and $\{(e_{i,1},\ldots,e_{i,n})\}_{1\leq i\leq n}$ is a basis of $\C^n$, there exists $A:=(x_{i,j})_{1\leq i,j\leq n}\in GL_{n}(\C)$ such that:
$$\left\Vert\sum_{i=1}^{n}\frac{x_{i,j}}{\lambda_{j}^{k}}(\oplus_{j=1}^{n}\lambda_{j}^{k}e_{i,j})\oplus\cdots\oplus\sum_{i=1}^{n}\frac{x_{i,j}}{\lambda_{j}^{k}}(\oplus_{j=1}^{n}\lambda_{j}^{k}e_{i,j})-(z_{1,1},\ldots,z_{1,n},\ldots,z_{n,1},\ldots,z_{n,n})\right\Vert<\frac{\epsi}{2}.$$
In both cases, we set $\mu_{i,j}=\frac{x_{i,j}}{\lambda_{j}^{k}}$, we apply $A^{-1}$ and we need to find $k\in\Z_{+}$ so that:
% $$\left\Vert\left(\begin{array}{c}
%                                                       \sum_{i=1}^{n}x_{1,i}\left(\frac{T}{\lambda_i}\right)^k y_i\\
%							\vdots\\
%							\sum_{i=1}^{n}x_{n,i}\left(\frac{T}{\lambda_i}\right)^k y_i\\
%                                                      \end{array}\right)
%-\left(\begin{array}{c}
%                                                       z_{1,n+1}\\
%							\vdots\\
%							z_{n,n+1}\\
%                                                      \end{array}\right)
% \right\Vert<\frac{\epsi}{2}.$$
%This may be re-written:
% \\$$\left\Vert A\left(\begin{array}{c}
%                                                       \left(\frac{T}{\lambda_1}\right)^k y_1\\
%							\vdots\\
%							\left(\frac{T}{\lambda_n}\right)^k y_n\\
%                                                      \end{array}\right)
%-\left(\begin{array}{c}
%                                                       z_{1,n+1}\\
%							\vdots\\
%							z_{n,n+1}\\
%                                                      \end{array}\right)\right\Vert<\frac{\epsi}{2}.$$
%Hence, this is equivalent to finding $k\in\Z_{+}$ so that:
$$\left\Vert \left(\begin{array}{c}
                                                       \left(\frac{T}{\lambda_1}\right)^k y_1\\
							\vdots\\
							\left(\frac{T}{\lambda_n}\right)^k y_n\\
                                                      \end{array}\right)
-A^{-1}\left(\begin{array}{c}
                                                       z_{1,n+1}\\
							\vdots\\
							z_{n,n+1}\\
                                                      \end{array}\right)\right\Vert<\frac{\epsi}{2\Vert A\Vert}.$$
But such a $k\in\Z_{+}$ exists because $(y_1,\ldots,y_n)$ is hypercyclic for $R$. Since we found $k\in\Z_{+}$ and $(\mu_{i,j})_{i,j}$ such that:
$$\left\Vert\sum_{i=1}^{n}\mu_{1,i}(\oplus_{j=1}^{n}\lambda_{j}^{k}e_{i,j}\oplus T^k y_i)\oplus\cdots\oplus\sum_{i=1}^{n}\mu_{n,i}(\oplus_{j=1}^{n}\lambda_{j}^{k}e_{i,j}\oplus T^k y_i)-(z_{i,j})_{1\leq i,j\leq n}\right\Vert<\frac{\epsi}{2}+\frac{\epsi}{2}=\epsi,$$
then $S$ is strongly $n$-supercyclic.

$\rhd$(\ref{tnpasn-11})$\Rightarrow$(\ref{tnpasn-12}):\\
Assume that $S$ is strongly $n$-supercyclic and let $M$ be a strongly $n$-supercyclic subspace for $S$ and denote by $M_0$ its projection on $\C^n$. Then, $M_0$ is strongly $\dim(M_0)$-supercyclic for $S_{\vert\C^n}$ and $\C^n$ being of dimension $n$, the main result of Bourdon, Feldman and Shapiro (\cite{Bou}) implies that $\dim(M_0)=n$, i.e. $M_0=\C^n$.
Thus, it is possible to choose a basis of $M$ like the following: 
$$M=\Span\left(\left(\begin{array}{c}
                                                                          1\\0\\\vdots\\0\\x_1
                                                                         \end{array}\right),\left(\begin{array}{c}
                                                                          0\\1\\\vdots\\0\\x_2
                                                                         \end{array}\right),\cdots,\left(\begin{array}{c}
                                                                          0\\0\\\vdots\\1\\x_n
                                                                         \end{array}\right)\right).$$
Let us prove that $R$ is hypercyclic.\\
Let $(z_1,\cdots,z_n)\in X^n$.
Since $S$ is strongly $n$-supercyclic, there exists a strictly increasing sequence $(n_k)_{k\in\Z_{+}}$ and complex numbers $(\mu_{i,j}^{(n_k)})_{1\leq i,j\leq n}$ such that for every $i\in\{1,\ldots,n\}$:
$$\begin{cases}
 \mu_{i,i}^{(n_k)}\lambda_{i}^{n_k}\underset{k\to+\infty}{\longrightarrow}1,\\
\mu_{i,j}^{(n_k)}\lambda_{j}^{n_k}\underset{k\to+\infty}{\longrightarrow}0\text{ for any }j\neq i,\\
z_{i}^{(n_k)}:=\sum_{j=1}^{n}\mu_{i,j}^{(n_k)}T^{n_k}x_j\underset{k\to+\infty}{\longrightarrow} z_i.
\end{cases}
$$
Take also, $$A^{(n_k)}=\left(\begin{array}{ccc}
                          \mu_{1,1}^{(n_k)}\lambda_{1}^{n_k}&\cdots&\mu_{n,1}^{(n_k)}\lambda_{n}^{n_k}\\
			  \vdots&\ddots&\vdots\\
			  \mu_{1,n}^{(n_k)}\lambda_{1}^{n_k}&\cdots&\mu_{n,n}^{(n_k)}\lambda_{n}^{n_k}\\
                         \end{array}\right).$$
Obviously, with the preceding convergences, $A^{(n_k)}\underset{k\to+\infty}{\longrightarrow} Id$, so we may suppose that $A^{(n_k)}$ is invertible and thus $\left(A^{(n_k)}\right)^{-1}\underset{k\to+\infty}{\longrightarrow}Id$ too.\\
Then, remark that the previous system is equivalent to the following: 
%$$A^{(n_k)}\left(\begin{array}{c}
%                                                      \frac{T^{n_k}x_1}{\lambda_{1}^{n_k}}\\
%\vdots\\
%\frac{T^{n_k}x_n}{\lambda_{n}^{n_k}}
%
%                                                     \end{array}\right)=\left(\begin{array}{c}
%                                                      z_{1}^{n_k}\\
%\vdots\\
%z_n^{n_k}
%
%                                                     \end{array}\right)
%$$
%Hence, 
$$\left(\begin{array}{c}
                                                      \frac{T^{n_k}x_1}{\lambda_{1}^{n_k}}\\
\vdots\\
\frac{T^{n_k}x_n}{\lambda_{n}^{n_k}}

                                                     \end{array}\right)=\left(A^{(n_k)}\right)^{-1}\left(\begin{array}{c}
                                                      z_{1}^{(n_k)}\\
\vdots\\
z_n^{(n_k)}

                                                     \end{array}\right)\underset{k\to+\infty}{\longrightarrow}\left(\begin{array}{c}
                                                      z_{1}\\
\vdots\\
z_n

                                                     \end{array}\right).$$
This proves the hypercyclicity of $R$.
\end{proof}

Feldman showed in \cite{Felnsupop} that there exists operators that are $n$-supercyclic but not $(n-1)$-supercyclic. The last result allows us to give an example of a strongly $n$-supercyclic operator which is not strongly $k$-supercyclic, for every $k<n$.

\begin{exe}\label{exepassup}
Let $B$ be the classical backward shift on $\ell^2(\Z_{+})$ being defined by $B(a_0,a_1,a_2,\ldots)=(a_1,a_2,\ldots)$ and let also $\lambda_1,\ldots,\lambda_n\in\mathbb{D}$. Then, the operator defined on $\C^n\oplus\ell^2(\Z_{+})$ by $T=\lambda_1 Id\oplus\cdots\oplus\lambda_n Id\oplus B$ is strongly $n$-supercyclic but not strongly $k$-supercyclic for every $k<n$.
\\Indeed, a classical result says that $\frac{B}{\lambda}$ satisfies the Hypercyclicity Criterion for the whole sequence of integers if and only if $\vert\lambda\vert<1$. Thus, $\frac{B}{\lambda_1}\oplus\cdots\oplus\frac{B}{\lambda_n}$ satisfies also the Hypercyclicity Criterion and $T$ is strongly $n$-supercyclic by Theorem \ref{theonpasn-1}. Nevertheless, the fact that $T$ is not $k$-supercyclic for $k<n$ is clear because if it was, then the restriction of $T$ to $\C^n$ would also be $k$-supercyclic but this contradicts Theorem \ref{theoboufelshap} \cite{Bou}.
\end{exe}

Since strongly $n$-supercyclic operators are in particular $n$-supercyclic, they inherit their spectral properties, hence the Circle Theorem applies to these ones. Therefore, for every strongly $n$-supercyclic operator, there exists a set of at most $n$ circles intersecting every component of the spectrum of $T$. This was obtained by Feldman \cite{Felnsupop} for $n$-supercyclic operators and he provided also examples for which $n$ circles were necessary.
In the case of strongly $n$-supercyclic operators, we are able to improve the Circle Theorem:

\begin{theo}\label{theospecerclefns}
Assume that $X$ is a complex Banach space and $T$ is a strongly $n$-supercyclic operator on $X$.\\
Then we can decompose $X=F\oplus X_0$, where $F$ and $X_0$ are $T$-invariant, $F$ has dimension at most $n$ and there exists $R\geq 0$ such that the circle $\{z\in\C: \vert z\vert=R\}$ intersects every component from the spectrum of $T_0:=T_{\vert X_0}$.
\\Moreover, in the particular case $n=2$, $T_{\vert F}$ is a diagonal operator.
\end{theo}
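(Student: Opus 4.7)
The strategy combines Feldman's Circle Theorem, Riesz spectral decomposition, Remark~\ref{remsommdir}, and Bourdon--Feldman--Shapiro (Theorem~\ref{theoboufelshap}). Since $T$ is in particular $n$-supercyclic, the Circle Theorem provides $k\leq n$ concentric circles $C_{R_1},\ldots,C_{R_k}$ meeting every component of $\sigma(T)$; if $k=1$ the conclusion holds with $F=\{0\}$ and $R=R_1$. Otherwise, I would fix a distinguished radius $R=R_j$, let $\sigma_0$ be the union of components of $\sigma(T)$ meeting $C_R$ and $\sigma_F$ the union of the remaining components. Arguing that this is a clopen partition of $\sigma(T)$---possibly peeling circles one at a time from the outside in, inserting intermediate radii in the resolvent set to separate successive spectral pieces---the Riesz functional calculus produces a $T$-invariant decomposition $X=X_0\oplus F$ with $\sigma(T_{\vert X_0})=\sigma_0$ and $\sigma(T_{\vert F})=\sigma_F$; by construction $\{\vert z\vert=R\}$ meets every component of $\sigma(T_0)$.

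To bound $\dim F$, I would observe via Remark~\ref{remsommdir} that $T_{\vert F}$ is strongly $\min(\dim F,n)$-supercyclic on $F$. Arguing as in Theorem~\ref{theonpasn-1} (where the non-main spectrum turns out to be of eigenvalue type), the ``other-circle'' spectral pieces carry finite multiplicity, so $F$ is finite-dimensional. Identifying $F\cong\C^{\dim F}$, Theorem~\ref{theoboufelshap} then forbids $\dim F>n$: otherwise $T_{\vert F}$ would be an $n$-supercyclic operator on $\C^{\dim F}$ with $\dim F>n$, contradicting Bourdon--Feldman--Shapiro.

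For $n=2$ we have $\dim F\in\{0,1,2\}$; only $\dim F=2$ with $T_{\vert F}$ non-diagonalisable---i.e.\ a Jordan block $\bigl(\begin{smallmatrix}\lambda & 1\\ 0 & \lambda\end{smallmatrix}\bigr)$ with $\lambda\neq 0$---needs to be excluded. Following the template of the proof of Theorem~\ref{theonpasn-1}, any strongly $2$-supercyclic subspace $L\subset F\oplus X_0$ must project surjectively onto $F=\C^2$ (otherwise the projection would yield a $k$-supercyclic operator on $\C^2$ with $k<2$, contradicting Theorem~\ref{theoboufelshap}), so $L$ admits a basis $(e_1,x_1),(e_2,x_2)$. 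Using $T_{\vert F}^{k}e_2 = k\lambda^{k-1}e_1+\lambda^{k}e_2$, one computes $T^{k}L = \Span\{(\lambda^{k}e_1,T_0^{k}x_1),\,(k\lambda^{k-1}e_1+\lambda^{k}e_2,T_0^{k}x_2)\}$, and changing basis to $(1,0,(T_0/\lambda)^{k}x_1)$, $(0,1,(T_0/\lambda)^{k}x_2 - (k/\lambda)(T_0/\lambda)^{k}x_1)$, density of $\{T^{k}L\}_{k}$ in $\Pn_2(X)$ would force the pair $\bigl((T_0/\lambda)^{k}x_1,\,(T_0/\lambda)^{k}x_2 - (k/\lambda)(T_0/\lambda)^{k}x_1\bigr)_{k}$ to be dense in $X_0^2$. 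The shearing factor $k/\lambda$ couples the coordinates too rigidly---the second coordinate would have to grow linearly in $k$ proportionally to the first---for this density to hold, giving the desired contradiction.

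The main obstacle lies in Steps~1--2: the Circle Theorem only guarantees intersection of the circles with each component, not a clopen partition of $\sigma(T)$, so extracting $F$ with both the correct spectral property and finite dimension requires a delicate iterative Riesz argument supplemented by the rigidity inherited from strong $n$-supercyclicity (in the spirit of Theorem~\ref{theonpasn-1}). The $n=2$ rigidity calculation, though intricate, is a direct verification along the lines above.
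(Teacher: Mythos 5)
Your overall architecture (separate the spectrum with circles, apply Riesz, bound the finite-dimensional piece, treat $n=2$ separately) matches the paper's, but the proposal is missing the one argument that actually does the work. You assert that ``the other-circle spectral pieces carry finite multiplicity, so $F$ is finite-dimensional,'' justified only by analogy with Theorem~\ref{theonpasn-1}. That theorem \emph{assumes} the finite-dimensional diagonal structure of the off-circle part and characterises when the resulting operator is strongly $n$-supercyclic; it provides no mechanism for proving that a Riesz spectral subspace is finite-dimensional. Nothing you write excludes, say, an infinite-dimensional spectral subspace whose spectrum is a disk disjoint from your distinguished circle. The paper closes exactly this gap with a quantitative dynamical argument: after normalising the separating radius to $1$ and writing $T=T_1\oplus T_2\oplus T_3$ with $\sigma(T_1)\subset\mathbb{D}$ and $\sigma(T_2)\subset\C\setminus\overline{\mathbb{D}}$, it supposes $\dim X_1\geq n$, picks $u_1,\ldots,u_n\in X_1$ each at distance $>1$ from the span of the others, and uses the invertible matrices $A_k$ coming from strong $n$-supercyclicity. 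Because the $T_2$-coordinates must be sent to $0$ while $T_2$ is expanding, $\Vert A_k^{-1}\Vert\to\infty$; because $T_1^{n_k}x_m\to 0$, dividing by the largest entry of $A_k^{-1}$ places some $u_q$ within distance $\tfrac12$ of the span of the others --- a contradiction, giving $\dim X_1\leq n-1$. The decomposition is then obtained by iterating this peeling (each step removes a piece of dimension $\leq n-1$, and Remark~\ref{remsommdir} plus Theorem~\ref{theoboufelshap} cap the total at $n$), not by fixing one circle in advance as you propose. Note also that your intended clopen partition (components meeting $C_R$ versus the rest) is not automatically clopen when there are infinitely many components; the paper avoids this by only ever separating one component strictly inside from one strictly outside a single circle.

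The $n=2$ ``moreover'' clause has the same problem in a milder form. In the paper it is free: each peeled piece has dimension $\leq n-1=1$, so $F$ is a sum of one-dimensional $T$-invariant subspaces and $T_{\vert F}$ is diagonal. Your alternative --- excluding a Jordan block by the shear computation $v_k=(T_0/\lambda)^k x_2-(k/\lambda)(T_0/\lambda)^k x_1$ --- ends with ``couples the coordinates too rigidly for this density to hold,'' which is an intuition, not a proof; as written it does not rule anything out. So the proposal identifies the right skeleton but omits both the finite-dimensionality argument and the dimension bound $\dim X_1\leq n-1$ that together constitute the substance of the theorem.
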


\begin{proof}

The theorem is trivial if there already exists a circle intersecting all the components from the spectrum of $T$.

If such a circle does not exist, then there exist $R\geq 0$ and two components $C_1,C_2$ from $\sigma(T)$ such that $C_1\subset B(0,R)$ and $C_2\subset\C\setminus\overline{B(0,R)}$.
Upon considering a scalar multiple of $T$, one may suppose that $R=1$.
Thus $\sigma(T)=\sigma_{1}\cup\sigma_2\cup\sigma_3$ where $\sigma_1\subset\mathbb{D}$, $\sigma_2\subset\C\setminus\overline{\mathbb{D}}$ and $\sigma_1,\sigma_2,\sigma_3$ are closed and pairwise disjoint.
Then, thanks to Riesz Theorem \cite{Bay} one can write $T=T_1\oplus T_2\oplus T_3$ on $X=X_1\oplus X_2\oplus X_3$ where $\sigma(T_i)=\sigma_i$ for $i=1,2,3$.
\\We are first going to prove that $\dim(X_1)\leq n-1$.
Assume to the contrary that $\dim(X_1)\geq n$.\\
Then, one can choose $(u_1,\ldots,u_n)\in X_{1}^{n}$ such that for every $i\in\{1,\ldots,n\}$, $\Vert z-u_i\Vert>1$ for any $z\in\Span(u_1,\ldots,u_{i-1},u_{i+1},\ldots,u_n)$.
\\Let $L=\small\Span\left(\left(\begin{array}{c}
                            x_1\\
y_1\\
z_1\\
                           \end{array}
\right),\cdots,\left(\begin{array}{c}
                            x_n\\
y_n\\
z_n\\
                           \end{array}
\right)\right)$ be a strongly $n$-supercyclic subspace for $T$, $(n_k)_{k\in\Z_{+}}$ be a strictly increasing sequence and $A_k\in M_n(\C)$ such that:
$$A_k\left(\begin{array}{c}
                            T_{1}^{n_k}x_1\\
\vdots\\
T_{1}^{n_k}x_n\\
                           \end{array}
\right)\underset{k\to+\infty}{\longrightarrow}\left(\begin{array}{c}
                            u_1\\
\vdots\\
u_n\\
                           \end{array}
\right)\text{ and }A_k\left(\begin{array}{c}
                            T_{2}^{n_k}y_1\\
\vdots\\
T_{2}^{n_k}y_n\\
                           \end{array}
\right)\underset{k\to+\infty}{\longrightarrow}\left(\begin{array}{c}
                            0\\
\vdots\\
0\\
                           \end{array}
\right).
$$
In addition, by density of $GL_{n}(\C)$ in $M_n(\C)$, one can assume that $A_k$ is invertible for every $k\in\Z_{+}$.
%Write:
%$$A_k\left(\begin{array}{c}
%                            T_{1}^{n_k}x_1\\
%\vdots\\
%T_{1}^{n_k}x_n\\
%                           \end{array}
%\right)=\left(\begin{array}{c}
%                            u_{1,k}\\
%\vdots\\
%u_{n,k}\\
%                           \end{array}
%\right):=\left(\begin{array}{c}
%                            u_{1}+\epsi_{1,k}\\
%\vdots\\
%u_{n}+\epsi_{n,k}\\
%                           \end{array}
%\right)$$where for every $i\in\{1,\ldots,n\}$, $\Vert\epsi_{i,k}\Vert\underset{k\to+\infty}{\longrightarrow}0.$\\
%This yields:$$\left(\begin{array}{c}
%                            T_{1}^{n_k}x_1\\
%\vdots\\
%T_{1}^{n_k}x_n\\
%                           \end{array}
%\right)=A_{k}^{-1}\left(\begin{array}{c}
%                            u_{1,k}\\
%\vdots\\
%u_{n,k}\\
%                           \end{array}
%\right).$$
This yields:$$\left(\begin{array}{c}
                            T_{1}^{n_k}x_1\\
\vdots\\
T_{1}^{n_k}x_n\\
                           \end{array}
\right)=A_{k}^{-1}\left(\begin{array}{c}
                            u_{1,k}\\
\vdots\\
u_{n,k}\\
                           \end{array}
\right):=A_{k}^{-1}\left(\begin{array}{c}
                            u_{1}+\epsi_{1,k}\\
\vdots\\
u_{n}+\epsi_{n,k}\\
                           \end{array}
\right)$$where for every $i\in\{1,\ldots,n\}$, $\Vert\epsi_{i,k}\Vert\underset{k\to+\infty}{\longrightarrow}0.$\\
Set $A_{k}^{-1}=\left(\begin{array}{ccc}
                    a_{1,1}^{k}&\cdots&a_{1,n}^{k}\\
\vdots&\ddots&\vdots\\
a_{n,1}^{k}&\cdots&a_{n,n}^{k}\\
                   \end{array}\right)$. Since $A_k\left(\begin{array}{c}
                            T_{2}^{n_k}y_1\\
\vdots\\
T_{2}^{n_k}y_n
                           \end{array}
\right)\underset{k\to+\infty}{\longrightarrow}\left(\begin{array}{c}
                            0\\
\vdots\\
0\\
                           \end{array}
\right)$ and $\sigma(T_{2})\subset \C\setminus\overline{\mathbb{D}}$, it follows that $\Vert A_{k}^{-1}\Vert\underset{k\to+\infty}{\longrightarrow}+\infty$ hence $\max(\vert a_{i,j}^{k}\vert)_{1\leq i,j\leq n}\underset{k\to+\infty}{\longrightarrow}+\infty$ and thus for any $m\in\{1,\ldots,n\}$, $\frac{T_{1}^{n_k}x_m}{\max(\vert a_{i,j}^{k}\vert)_{1\leq i,j\leq n}}\underset{k\to+\infty}{\longrightarrow}0$.
Let $k\in\Z_{+}$ be such that for any $m\in\{1,\ldots,n\}$, $\frac{\Vert T_{1}^{n_k}x_m\Vert}{\max(\vert a_{i,j}^{k}\vert)_{1\leq i,j\leq n}}<\frac{1}{2}$ and $\Vert\epsi_{m,k}\Vert<\frac{1}{2}$ and set $\vert a_{p,q}^{k}\vert:=\max(\vert a_{i,j}^{k}\vert)_{1\leq i,j\leq n}$.
Then, we have $T_{1}^{n_k}x_p=\sum_{i=1}^{n}a_{p,i}^{k}u_{i,k}$, yielding $$\left\Vert u_{q,k}+\sum_{i=1,\ i\neq q}^{n}\frac{a_{p,i}^{k}}{a_{p,q}^{k}}u_{i}^{k}\right\Vert=\left\Vert\frac{T_{1}^{n_k}x_p}{a_{p,q}^{k}}\right\Vert< \frac{1}{2}.$$
This result contradicts our first assumption that for every $i\in\{1,\ldots,n\}$, $\Vert z-u_i\Vert>1$ for any $z\in\Span(u_1,\ldots,u_{i-1},u_{i+1},\ldots,u_n)$.
Hence $\dim(X_1)\leq n-1$ and if $n=2$, we get $\dim(X_1)=1$.
\\We can do the same process with $T_2\oplus T_3$ which is strongly $n$-supercyclic thus either there exists a circle intersecting every component of the spectrum of $T_2\oplus T_3$ and the proof is finished, or we can decompose $T_2\oplus T_3$ as a direct sum of two operators where the first one is defined on a space of dimension lower than $n-1$.
Then, as there is an at most $n$-dimensional subspace in this decomposition because there is no strongly $n$-supercyclic operators on a space of dimension strictly greater than $n$ according to Theorem \ref{theoboufelshap}. Thus, we can iterate this process only a finite number of times.
This proves the first part of the theorem. The particular case $n=2$ part, is clear from the proof. 

\end{proof}

In particular, considering $n=2$ in the preceding theorem gives an alternative generalising the case of supercyclic operators. Indeed, for a supercyclic operator it is well-known that the point spectrum is either empty or a singleton $\{\lambda\}$ and in the last case, $\lambda^{-1}T$ is hypercyclic on an hyperplane of $X$. The following corollary gives a similar result for strongly 2-supercyclic operators.
\begin{cor}\label{corspef2s}
Assume that $X$ is a complex Banach space and $T$ is a strongly 2-supercyclic operator on $X$. Then, one of the following properties applies:

$\bullet$ There exists $R\geq 0$ such that the circle $\{z\in\C: \vert z\vert=R\}$ intersects every component from the spectrum of $T$,

$\bullet$ $T=\left(\begin{array}{cc}
a &0  \\ 
0 &S  \\
\end{array}\right)$ with $S$ being a supercyclic operator, $a\in\C^{*}$

$\bullet$ $T=\left(\begin{array}{ccc}
a &0 &0 \\ 
0 &b &0 \\
0 &0 &S \\
\end{array}\right)$ with $\frac{S}{a}\oplus\frac{S}{b}$ hypercyclic, $a,b\in\C^{*}$.
\end{cor}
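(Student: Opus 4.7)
The plan is to apply Theorem \ref{theospecerclefns} with $n = 2$, which yields a decomposition $X = F \oplus X_0$ with $T$-invariant summands, $\dim F \leq 2$, $T_{|F}$ diagonal, and a circle $\{|z| = R\}$ meeting every connected component of $\sigma(T_0)$. I would then split into three cases according to the value of $\dim F$, each corresponding to one of the three bullets.

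If $\dim F = 0$, then $\sigma(T) = \sigma(T_0)$ and the first bullet follows at once. If $\dim F = 2$, write $T_{|F} = a\,\mathrm{Id}\oplus b\,\mathrm{Id}$ in a diagonal basis. A preliminary kernel argument rules out $a = 0$ or $b = 0$: indeed if $a = 0$, then $T^k(L) \subset \{0\}\oplus \C e_b \oplus X_0$ for every $k \geq 1$ and every $2$-dimensional subspace $L$, which forbids the orbit of $L$ from being dense in $\Pn_2(X)$. Once $a, b \in \C^*$ is established, Theorem \ref{theonpasn-1} applies directly (with $p = 1, m_1 = 2$ if $a = b$, and with $p = 2, m_1 = m_2 = 1$ otherwise) and yields that $\tfrac{T_0}{a} \oplus \tfrac{T_0}{b}$ is hypercyclic. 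This is the third bullet.

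The delicate case is $\dim F = 1$, where Theorem \ref{theonpasn-1} cannot be invoked directly since its format would correspond to strong $1$-supercyclicity rather than strong $2$-supercyclicity. Write $T = a \oplus T_0$; the same kernel argument still forces $a \in \C^*$. The task is to show that $T_0$ is supercyclic. Using Proposition \ref{propbeq}, pick a strongly $2$-supercyclic subspace $L$; it must have nonzero projection on $\C$ (otherwise its orbit would stay inside the proper invariant set $\{0\}\oplus X_0$), so up to a change of basis $L = \Span((1, y_1), (0, y_2))$. I would then test the density of $\bigcup_k T^k(L) \times T^k(L)$ in $X^2$ against the target pair $((0, z), (1, 0))$ for arbitrary $z \in X_0$: the approximation of the second component yields $(1 + \eta)(T_0/a)^k y_1 + \beta_2 T_0^k y_2 = O(\epsi)$ with $|\eta| < \epsi$, and substituting the resulting expression for $T_0^k y_1$ into the approximation of $(0, z)$ cancels the $T_0^k y_1$ contribution up to $O(\epsi^2)$ and leaves $\gamma T_0^k y_2 = z + O(\epsi)$ for a suitable $\gamma \in \C$. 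Letting $z$ range over $X_0$ is exactly supercyclicity of $T_0$ with $y_2$ as a supercyclic vector, giving the second bullet.
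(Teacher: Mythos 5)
Your proposal is correct and follows essentially the same route as the paper: apply Theorem \ref{theospecerclefns} with $n=2$, split on $\dim F$, invoke Theorem \ref{theonpasn-1} when $\dim F=2$, and run a direct approximation against the target pair $\bigl((1,0),(0,z)\bigr)$ when $\dim F=1$ to extract supercyclicity of $T_0$ (the paper organizes that last computation by inverting the $2\times 2$ coefficient matrix rather than by substitution, but it is the same calculation, ending likewise with $\gamma_k T_0^{n_k}y_2\to z$). Your explicit kernel argument ruling out $a=0$ or $b=0$ is a small point the paper leaves implicit, and is a welcome addition.
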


\begin{proof}
According to Theorem \ref{theospecerclefns} we have the following alternative: either there exists a circle intersecting every component of the spectrum of $T$ or we can decompose $X=F\oplus X_0$ with $F$ and $X_0$, $F$ being of dimension at most 2 and $S:=T_{\vert F}$ being diagonal and there exists a circle intersecting every component of the spectrum of $T_0:=T_{\vert X_0}$.
\\$\bullet$  If $\dim(F)=1$ then $T=\left(\begin{array}{cc}
a &0  \\ 
0 &S \\
\end{array}\right)$ for some $a\in\C^{*}$.\\
We can suppose that $L$ is a strongly 2-supercyclic subspace and that $L=\Span\left(\left(\begin{array}{c}
1  \\ 
x \\
\end{array}\right),\left(\begin{array}{c}
0  \\ 
y \\
\end{array}\right)\right)$.
Let $z\in X_0$.
Since $T$ is strongly 2-supercyclic, there exists an increasing sequence $(n_k)_{k\in\Z_{+}}$ and $A_k:=\left(\begin{array}{cc}
\lambda_{1}^{n_k}&\lambda_{2}^{n_k}  \\ 
 \mu_{1}^{n_k}&\mu_{2}^{n_k} \\
\end{array}\right)\in GL_{2}(\C)$ such that
$$A_k\left(\begin{array}{c}
S^{n_k}x  \\ 
S^{n_k}y \\
\end{array}\right)=\left(\begin{array}{c}
0+\epsi_{1}^{n_k}  \\ 
z +\epsi_{2}^{n_k}\\
\end{array}\right)\text{ where }\epsi_{1}^{n_k}\underset{k\to+\infty}{\longrightarrow}0\text{ and }\epsi_{2}^{n_k}\underset{k\to+\infty}{\longrightarrow}0$$ and 
$$A_k\left(\begin{array}{c}
a^k  \\ 
0 \\
\end{array}\right)=\left(\begin{array}{c}
1+\delta_{1}^{n_k}  \\ 
0 +\delta_{2}^{n_k}\\
\end{array}\right)\text{ where }\delta_{1}^{n_k}\underset{k\to+\infty}{\longrightarrow}0\text{ and }\delta_{2}^{n_k}\underset{k\to+\infty}{\longrightarrow}0.$$
Thus, considering the inverse of $A_k$ we get: $S^{n_k}y=\frac{-\mu_{1}^{n_k}\epsi_{1}^{n_k}+\lambda_{1}^{n_k}(z+\epsi_{2}^{n_k})}{\lambda_{1}^{n_k}\mu_{2}^{n_k}-\lambda_{2}^{n_k}\mu_{1}^{n_k}}$. Multiply the last equality by $a^k$ to obtain:
$$\begin{aligned}
a^k(\lambda_{1}^{n_k}\mu_{2}^{n_k}-\lambda_{2}^{n_k}\mu_{1}^{n_k})S^{n_k}y&=-a^k\mu_{1}^{n_k}\epsi_{1}^{n_k}+a^k\lambda_{1}^{n_k}(z+\epsi_{2}^{n_k})&\\
&=-\delta_{2}^{n_k}\epsi_{1}^{n_k}+(1+\delta_{1}^{n_k})(z+\epsi_{2}^{n_k})&\underset{k\to+\infty}{\longrightarrow}z.\\ 
\end{aligned}$$
Hence $S$ is supercyclic on $X_0$.\\
$\bullet$  If $\dim(F)=2$ then $T=\left(\begin{array}{ccc}
a &0 &0 \\ 
0 &b &0 \\
0 &0 &S \\
\end{array}\right)$, for some $a,b\in\C^{*}$.\\
It suffices to apply Theorem \ref{theonpasn-1} to conclude that $\frac{S}{a}\oplus\frac{S}{b}$ is hypercyclic.
\end{proof}

\begin{rem}
Actually, these three conditions are necessary, and we give an example for each one.

The first point is easy, simply consider an operator satisfying the Supercyclicity Criterion: the circle exists because the operator is supercyclic and it is strongly 2-supercyclic thanks to Corollary \ref{corpremex}.
 
The second one is trickier:
let $\phi\in H^{\infty}(\mathbb{D})$ be defined by $\phi(z)=1+\imath+z$, and let us denote by $M_\phi$ the multiplication operator associated to $\phi$ on $H^{2}(\mathbb{D})$. Set also $R_{n}:=\sum_{i=1}^{n-1}(M_{\phi}^{*})^{i}$. Then, one may prove following Exercise 1.9 in \cite{Bay} that there exists a universal vector for $R_n$: $u\in H^{2}(\mathbb{D})$ and $u\notin Im(M_{\phi}^{*}-I)$ and that 
$\left(\begin{array}{cc}
1&0\\
u&M_{\phi}^{*}\\
                                                   \end{array}\right)$ is supercyclic and is not similar to an operator of the form $I\oplus S$. Noticing also that $\{R_n\oplus(M_{\phi}^{*})^{n}\}_{n\geq2}$ satisfies the Universality Criterion, then one can prove that 
$T:=\left(\begin{array}{ccc}
a&0&0\\
0&1&0\\
0&u&M_{\phi}^{*}\\
\end{array}\right)$ is strongly 2-supercyclic on $\C^2\oplus H^{2}(\mathbb{D})$ and is not similar to any operator of the shape $bI\oplus cI\oplus T_0$ and does not even admit a circle intersecting every component of its spectrum for a well-chosen complex number $a$.

Finally, the third case is simple: $T=\left(\begin{array}{ccc}
                                                    -1&0&0\\
0&-\frac{1}{2}&0\\
0&0&M_{\phi}^{*}\\
                                                   \end{array}\right)$ is strongly 2-supercyclic on $\C^2\oplus H^{2}(\mathbb{D})$ with $\phi(z)=1+z$ by Theorem \ref{theonpasn-1} but its spectrum is $\sigma(T)=\left\{-1,-\frac{1}{2}\right\}\cup D(1,1)$.
\end{rem}

\section{Other classes of interesting examples}

Until now, we proved several properties of strongly $n$-supercyclic operators and we came across different classes of examples but links between strong $(n-1),n,(n+1)$-supercyclic operators are not well understood yet. This part provides some answers but also some interesting questions on the subject.

\subsection{A class of strongly $k$-supercyclic operators with $k\geq n$}

The following example generalises Corollary \ref{corpremex}. It has been proved in \cite{Bou} and \cite{Ernstnsupstrongnsupop} that strong $n$-supercyclicity is purely infinite dimensional. We are going to make use of this fact to construct an operator being strongly $k$-supercyclic if and only if $k\geq n$.
  
\begin{exe}
Assume that $S$ satisfies the Hypercyclicity Criterion on a Banach space $Y$ and define $T=Id\oplus S$ on $X=\K^{n}\oplus Y$.
Then $T$ is strongly $k$-supercyclic if and only if $k\geq n$.
\end{exe}

\begin{proof}
$\ $\\

$\bullet$ We first prove that if $T$ is strongly $k$-supercyclic, then $k\geq n$. Assume to the contrary that $k<n$, then restricting $T$ to $\K^n$, one obtains that $Id$ is strongly $k$-supercyclic on $\K^n$ with $k<n$. This is impossible by \cite{Bou} for the complex case and \cite{Ernstnsupstrongnsupop} for the real case.

$\bullet$ Let us prove now that for every $p\geq n$, $T$ is strongly $p$-supercyclic.

The following lemma is the key of the proof.

\begin{lem}\label{lemfornpr}
Let $p\geq 1$. Then, there exists $(y_1,\ldots,y_p)\in Y^p$ such that for any $A=(\lambda_{i,j})_{1\leq i,j\leq p}\in GL_p(\K)$, the set: $$\left\{S^k\left(\sum_{i=1}^{p}\lambda_{1,i}y_i\right)\oplus\cdots\oplus S^{k}\left(\sum_{i=1}^{p}\lambda_{p,i}y_i\right)\right\}_{k\in\Z_{+}}\text{ is dense in }Y^{p}.$$ 
\end{lem}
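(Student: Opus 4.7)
The plan is to produce a single tuple $(y_1,\ldots,y_p)$ that simultaneously works for every invertible matrix $A$. The key observation is that the transformation that sends $(y_1,\ldots,y_p)$ to $\left(\sum_i\lambda_{1,i}y_i,\ldots,\sum_i\lambda_{p,i}y_i\right)$ is just the action on $Y^p$ of the matrix $A$ viewed as a block-scalar operator $\widetilde{A}:Y^p\to Y^p$. Since $A\in GL_p(\K)$, the operator $\widetilde{A}$ is a linear homeomorphism of $Y^p$, with inverse $\widetilde{A^{-1}}$. Moreover $\widetilde{A}$ commutes with $S\oplus\cdots\oplus S$ ($p$ times), because scalar multiplication commutes with $S$ on each coordinate.

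First I would invoke the characterisation of the Hypercyclicity Criterion due to B\`es and Peris: since $S$ satisfies it on $Y$, the direct sum $S^{(p)}:=S\oplus\cdots\oplus S$ is hypercyclic on $Y^p$. I would then pick any hypercyclic vector $(y_1,\ldots,y_p)\in Y^p$ for $S^{(p)}$, so that $\{((S^{(p)})^k(y_1,\ldots,y_p)\}_{k\in\Z_+}=\{(S^k y_1,\ldots,S^k y_p)\}_{k\in\Z_+}$ is dense in $Y^p$.

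Next, for an arbitrary $A=(\lambda_{i,j})\in GL_p(\K)$, I would rewrite
$$\left(S^k\!\left(\sum_{i=1}^{p}\lambda_{1,i}y_i\right),\ldots,S^{k}\!\left(\sum_{i=1}^{p}\lambda_{p,i}y_i\right)\right)=\widetilde{A}\bigl(S^ky_1,\ldots,S^ky_p\bigr),$$
using only linearity of $S$. Hence this sequence is the image under the homeomorphism $\widetilde{A}$ of the dense set $\{(S^ky_1,\ldots,S^ky_p):k\in\Z_+\}$, and is therefore dense in $Y^p$. This gives the density for every invertible $A$ simultaneously, which is exactly the statement of the lemma.

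There is no real obstacle here; the only subtle point is to note that the particular $A$ does not affect the choice of the witness $(y_1,\ldots,y_p)$, because the density is transferred through the fixed homeomorphism $\widetilde A$ rather than being verified afresh for each $A$. Once this is observed, the proof reduces to the B\`es--Peris theorem plus the trivial commutation $\widetilde{A}\circ S^{(p)}=S^{(p)}\circ\widetilde{A}$.
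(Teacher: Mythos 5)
Your proposal is correct and follows the same route as the paper: both invoke the B\`es--Peris theorem to get a hypercyclic vector $(y_1,\ldots,y_p)$ for $S\oplus\cdots\oplus S$ and then transfer density through the invertibility of $A$. The paper dismisses this last step as ``some simple computations,'' whereas you make it explicit via the homeomorphism $\widetilde{A}$ commuting with $S^{(p)}$, which is exactly the intended argument.
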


\begin{proof}

Since $S$ satisfies the Hypercyclicity Criterion, then $L:=\underbrace{S\oplus\cdots\oplus S}_{p \text{ times}}$ is hypercyclic too \cite{Besper}.
Let $(y_1,\ldots,y_p)\in Y^{p}$ is a hypercyclic vector for $L$.
Since $(y_1,\ldots,y_p)\in Y^{p}$ is hypercyclic for $L$. Then $(S^{k}(y_{1}),\ldots,S^{k}(y_{p}))_{k\geq0}$ is dense in $Y^{p}$.
Since $A$ is invertible, some simple computations imply the result of the lemma.
%
% there exists $k\in\Z_{+}$ such that:
%$$\left\Vert \left(\begin{array}{c}S^k y_1\\ \vdots\\ S^k y_p\\ \end{array}\right)-A^{-1}\left(\begin{array}{c}a_1\\ \vdots\\ a_p\\ \end{array}\right)\right\Vert<\frac{\epsi}{\Vert A\Vert}.$$
%Thus, there also exists $(\epsi_1,\ldots,\epsi_p)\in Y^p$ with $\left\Vert \left(\begin{array}{c}\epsi_1\\\vdots\\\epsi_p\\\end{array}\right)\right\Vert<\frac{\epsi}{\Vert A\Vert}$ such that:
%$$\left(\begin{array}{c}S^k y_1\\ \vdots\\ S^k y_p\\ \end{array}\right)-\left(\begin{array}{c}\epsi_1\\ \vdots\\ \epsi_p\\ \end{array}\right)=A^{-1}\left(\begin{array}{c}a_1\\ \vdots\\ a_p\\ \end{array}\right).$$
%and then 
%$$A\left(\begin{array}{c}S^k y_1\\ \vdots\\ S^k y_p\\ \end{array}\right)-\left(\begin{array}{c}a_1\\ \vdots\\ a_p\\ \end{array}\right)=A\left(\begin{array}{c}\epsi_1\\ \vdots\\ \epsi_p\\ \end{array}\right).$$
%By considering the norm, we have:
%$$\left\Vert A\left(\begin{array}{c}S^k y_1\\ \vdots\\ S^k y_p\\ \end{array}\right)-\left(\begin{array}{c}a_1\\ \vdots\\ a_p\\ \end{array}\right)\right\Vert=\left\Vert A\left(\begin{array}{c}\epsi_1\\ \vdots\\ \epsi_p\\ \end{array}\right)\right\Vert\leq \Vert A\Vert\frac{\epsi}{\Vert A\Vert}=\epsi.$$
%Thus, we have approached $(a_1,\ldots,a_p)$ and this achieves the proof of the lemma by proving the density of the set $\left\{S^k\left(\sum_{i=1}^{p}\lambda_{1,i}y_i\right)\oplus\cdots\oplus S^{k}\left(\sum_{i=1}^{p}\lambda_{p,i}y_i\right)\right\}_{k\in\Z_{+}}$ in $Y^p$.
\end{proof}

We come back to the proof of the example.\\
Let $p\geq n$ and $\{e_1,\ldots,e_p\}$ be a generating family of $\K^{n}$ with $p$ elements and $(y_1,\ldots,y_p)$ given by the previous lemma and denote $x_i=(e_i,y_i)\in X$, for every $i\in\{1,\ldots,p\}$.\\
It is easy to show that $M:=\Span(x_1,\ldots,x_p)$ is strongly $p$-supercyclic for $T$.
Actually, it suffices to prove that $\cup_{k\in\Z_{+}}\underbrace{T^k(M)\times\cdots\times T^{k}(M)}_{p\text{ times}}$ is dense in $X^{p}$ thanks to Proposition \ref{propbeq}.\\
The use of the definition of $M$ reduces the proof to the following assertion:
$$\cup_{k\in\Z_{+}, (\lambda_{i,j})_{1\leq i,j\leq p}\in M_p(\K)}\sum_{i=1}^{p}\lambda_{1,i}(e_i\oplus S^k y_i)\oplus\cdots\oplus\sum_{i=1}^{p}\lambda_{p,i}(e_i\oplus S^k y_i)\text{ is dense in }X^{p}.$$
For this purpose, let $\epsi>0$, $(t_1,\ldots,t_p)\in(\K^{n})^p$ and $(z_1,\ldots,z_p)\in Y^p$.
Since $GL_p(\K)$ is dense in $M_p(\K)$, there exists $A=(\lambda_{i,j})_{1\leq i,j\leq p}\in GL_p(\K)$ so that:
$$\left\Vert A\left(\begin{array}{c}e_1\\ \vdots\\ e_p\\ \end{array}\right)-\left(\begin{array}{c}t_1\\ \vdots\\ t_p\\ \end{array}\right)\right\Vert<\frac{\epsi}{2}.$$
On the other hand, Lemma \ref{lemfornpr} implies that there is $k\in\Z_{+}$ satisfying:
$$\left\Vert S^k\left(\sum_{i=1}^{p}\lambda_{1,i}y_i\right)\oplus\cdots\oplus S^k\left(\sum_{i=1}^{p}\lambda_{p,i}y_i\right)-\left(z_{1},\ldots,z_p\right)\right\Vert\leq \frac{\epsi}{2}.$$
Hence, 
$$\left\Vert\sum_{i=1}^{p}\lambda_{1,i}\left(e_i\oplus S^k y_i\right)\oplus\cdots\oplus\sum_{i=1}^{p}\lambda_{p,i}\left(e_i\oplus S^k y_i\right)-\oplus_{i=1}^{p}\left(t_i\oplus z_i\right)\right\Vert<\epsi$$
This is the relation we were looking for. Thus, $T$ is strongly $p$-supercyclic.
\end{proof}

\begin{rem}
In the same spirit, one may easily prove that strongly $n$-supercyclic operators given by Theorem \ref{theonpasn-1} are not strongly $k$-supercyclic for $k<n$. \\
Building on the same ideas as in the previous example but using properties of rotations, one may easily construct an operator being $n$-supercyclic from a particular rank and being strongly $n$-supercyclic from a strictly greater rank.
\end{rem}

\subsection{A supercyclic operator which is not strongly $n$-supercyclic for a fixed $n\geq2$}

We already noticed in the previous part that strong $n$-supercyclicity does not imply strong $(n-1)$-supercyclicity. It is a natural question to ask whether the contrary is true or not: does strong $n$-supercyclicity imply strong $(n+1)$-supercyclicity?
In the following, we prove that it is not the case for $n=1$. To do so, we will construct a supercyclic operator which is not strongly $p$-supercyclic for $p\geq2$.
Operators satisfying the Supercyclicity Criterion are useless in this context because we noticed in Corollary \ref{corpremex} that these operators are strongly $n$-supercyclic for any $n\geq1$.
Thus, we are forced to consider operators that are less handy. Actually, we are modifying the construction of a hypercyclic operator which is not weakly mixing from Bayart and Matheron \cite{Bay} to achieve it.

\begin{theo}\label{theosupnonpsup}
 Assume that $X$ is a Banach space with an unconditional normalised basis $(e_{i})_{i\in\Z_{+}}$ for which the associated forward shift $(e_{i})_{i\in\Z_{+}}$ is continuous and let $p\geq2$. Then, there exists a supercyclic operator which is not strongly $h$-supercyclic for any $2\leq h\leq p$.
\end{theo}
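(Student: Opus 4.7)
The goal is to tailor the de la Rosa--Read / Bayart--Matheron construction of a hypercyclic operator $T$ with $T\oplus T$ non-hypercyclic (see \cite{Bay}) to the supercyclic / strongly $h$-supercyclic dichotomy. The operator will be a small perturbation of the identity of the form $T=I+S_{w}$, where $S_{w}$ is the weighted forward shift $S_{w}e_{i}=w_{i}e_{i+1}$ associated to the unconditional basis $(e_{i})_{i\in\Z_{+}}$; the hypothesis on continuity of the forward shift ensures $S_{w}$ (hence $T$) is a bounded operator for any bounded choice of weights $(w_{i})$.

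The weights will be designed along carefully spaced blocks $[N_{k},N_{k}+n_{k}]$, mimicking the Bayart--Matheron scheme: outside these blocks the weights are very small, so that $T^{n}$ essentially acts like the identity except at the prescribed return times. The first step is to arrange that, for a carefully chosen base vector $x\in X$, the set $\{\lambda T^{n_{k}}x : k\in\Z_{+},\ \lambda\in\K^{\ast}\}$ is dense in $X$; this will give supercyclicity of $T$ by a standard density argument along a countable basis of $X$. The supercyclicity version of the construction differs from the hypercyclic one in that we are free to rescale at each step, which actually makes the positive side \emph{easier} than in \cite{Bay}: we use the scalar rescaling $\lambda$ to compensate for the normalisation of $T^{n_{k}}x$ and only have to match the direction.

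The heart of the argument is the negative side: show that $T$ is not strongly $h$-supercyclic for any $2\le h\le p$. By Proposition \ref{propbirkhoffnsup}, it suffices to exhibit a non-empty open set $U\subset\mathbb{P}_{h}(X)$ and a non-empty open set $V\subset X^{h}$ such that $(\oplus^{h}T)^{i}(\pi_{h}^{-1}(U))\cap V=\emptyset$ for every $i\in\Z_{+}$. The block weights will be tuned so that the only times $n$ at which $T^{n}y$ can come close to a prescribed non-trivial target are the distinguished times $n=n_{k}$, and the action of $T^{n_{k}}$ on any subspace $L$ of dimension $h\ge 2$ is essentially a rank-one map in a direction determined by $n_{k}$ alone. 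Consequently, for a generic choice of $h$ linearly independent targets $(v_{1},\ldots,v_{h})$ (forming $V$), the $h$-tuple $(T^{n_{k}}y_{1},\ldots,T^{n_{k}}y_{h})$ with $y_{1},\ldots,y_{h}\in L$ cannot simultaneously approximate $(v_{1},\ldots,v_{h})$: all coordinates get pushed into (essentially) the same one-dimensional cone at the return times, whereas $V$ is chosen to force independent directions.

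The main obstacle is calibrating the weights so that both effects coexist: the construction must be loose enough that the projective orbit of a single line is dense (supercyclicity), yet rigid enough that, at the only admissible return times, the image $T^{n_{k}}L$ of any fixed $h$-dimensional $L$ is pinned down to a subset of $X^{h}$ too thin to meet $V$. This is the genuinely delicate part, and it is handled---as in Bayart--Matheron---by a recursive construction in which, at stage $k$, one chooses $n_{k}$ and the weights on the next block to satisfy finitely many density requirements coming from a countable basis, while preserving inductively the obstruction on the return-time-$n_{k}$ directions for all $2\le h\le p$. Once these two features of the weights are in hand, supercyclicity follows from the density lemma applied to $e_{0}$, and the failure of strong $h$-supercyclicity follows from Proposition \ref{propbirkhoffnsup} applied to the obstruction pair $(U,V)$ produced by the construction.
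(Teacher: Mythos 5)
You have correctly located the right circle of ideas (the Bayart--Matheron / de la Rosa--Read construction from \cite[Section 4.2]{Bay}), but both halves of your plan have genuine gaps. On the positive side, an operator of the form $T=I+S_{w}$ with $S_{w}$ a weighted \emph{forward} shift cannot be supercyclic with cyclic vector $e_{0}$: one computes $T^{n}e_{0}=\sum_{k}\binom{n}{k}(w_{1}\cdots w_{k})e_{k}$, so the ratio of the $k$-th coordinate to the $0$-th is the fixed quantity $\binom{n}{k}w_{1}\cdots w_{k}$, a prescribed function of $n$; the projective orbit of $e_{0}$ therefore lies on a one-parameter curve and is nowhere dense. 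The paper's operator is not of this shape: it is a weighted forward shift with \emph{feedback} perturbations $T^{b_{n}}e_{0}=P_{n}(T)e_{0}+e_{b_{n}}$, where $(P_{n})$ enumerates (large multiples of) all polynomials with coefficients in a countable dense set; it is precisely these feedback terms that make $\{\lambda T^{i}e_{0}\}$ dense, and without them no choice of weights will do.

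The more serious gap is the negative side. Your proposed obstruction --- that at the return times $T^{n_{k}}$ acts on any $h$-dimensional $L$ as an essentially rank-one map, hence pushes everything into one direction --- does not obstruct density in $\Pn_{h}(X)$. The Grassmannian topology is blind to the relative magnitudes of spanning vectors: a plane spanned by a huge vector and a tiny vector is exactly the same point of $\Pn_{h}(X)$ as the plane spanned by their normalisations, and in testing density one is allowed to take linear combinations $\sum_{i}\mu_{j,i}T^{n_{k}}y_{i}$ with arbitrarily large coefficients (this is exactly what happens in the proof of Theorem \ref{theonpasn-1}, where the $\mu_{i,j}^{(n_{k})}$ blow up like $\lambda_{j}^{-n_{k}}$). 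So near-degeneracy of $T^{n_{k}}|_{L}$ gives no control on the plane $T^{n_{k}}(L)$ unless you also control the \emph{direction} of the small component, which your sketch does not do. The paper avoids this trap entirely by an algebraic obstruction: Lemma \ref{lemcritnfps} shows that if there exist $2h$ linear forms $\Phi_{\delta}$ on $\K[T](e_{0})$, dual to $e_{0},Te_{0},\ldots,T^{2h-1}e_{0}$ and continuous for the product $P(T)e_{0}\cdot Q(T)e_{0}=(PQ)(T)e_{0}$, then $T$ cannot be strongly $h$-supercyclic --- the contradiction being a ratio identity $a_{j}/b_{i}=2a_{j}/b_{i}$ obtained by evaluating two such functionals on products of orbit vectors, which is scale-invariant and hence immune to the rescaling freedom that defeats your geometric argument. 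Building those functionals (and keeping them and $T$ continuous via control sequences on the polynomials) is the actual content of the proof; your plan as written has no substitute for this step.
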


The proof of this theorem is long and is based on the work of Bayart and Matheron \cite[section 4.2]{Bay}. The proof is a succession of intermediate results leading to the final proof.
The main idea is to construct an operator and to create a criterion to check that this operator is not strongly $h$-supercyclic. We may refer the reader to the book \cite{Bay} for certain proofs.
\\Let us define some material we need in the sequel.

Assume that $T$ is a linear bounded operator on a topological vector space $X$ and let $e_0\in X$, then we set:
$$\begin{aligned}
\K[T](e_{0})&=\{P(T)(e_{0}), P\in\K[X]\}&\\
&=\Span\{T^{i}(e_{0}),i\in\Z_{+}\}.&\\ 
\end{aligned}$$
Since two polynomials with variable $T$ always commute, we can also define a product on $\K[T](e_{0})$ by: $$P(T)e_{0}\cdot Q(T)e_{0}=PQ(T)e_{0}.$$
%We first give a technical lemma proving the convergence of the sphere unity for every subspace from a sequel of subspaces of dimension $h$ to the unit sphere of another one if a sequel of bases converges to a basis of the second subspace.
We first give a technical lemma proving the convergence of a sequence of unit spheres if there is a sequence of  basis converging to another basis.
\begin{lem}\label{leminfcercle} 
Assume that $X$ is a normed vector space, $h\geq2$, and that $E=\Span(u_1,\ldots,u_h)$ is a subspace of dimension $h$. For every $1\leq i\leq h$, let $(v_{i}^{n})_{n\in\Z_{+}}$ be a sequence of elements of $X$ such that $\Vert v_{i}^{n}-u_i\Vert\leq\frac{1}{n}$ and set $F_n=\Span(v_{1}^{n},\ldots,v_{h}^{n})$.
Then, $\underset{z\in F_n,\Vert z\Vert=1}{\sup}\underset{x\in E,\Vert x\Vert=1}{\inf}\Vert x-z\Vert\underset{n\to+\infty}{\longrightarrow}0$.
\end{lem}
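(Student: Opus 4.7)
The plan is to exploit the equivalence of norms in finite dimension. Since $E$ is $h$-dimensional with linearly independent basis $u_1,\ldots,u_h$, the map $(\alpha_1,\ldots,\alpha_h)\mapsto \|\sum\alpha_i u_i\|$ is a norm on $\K^h$, so there exists $c>0$ with
\[\left\|\sum_{i=1}^h \alpha_i u_i\right\|\geq c\sum_{i=1}^h |\alpha_i| \quad \text{for all } (\alpha_i)\in\K^h.\]
I would then transfer this lower bound to the perturbed bases $\{v_1^n,\ldots,v_h^n\}$ via the triangle inequality: since $\|v_i^n-u_i\|\leq \tfrac{1}{n}$,
\[\left\|\sum_{i=1}^h \alpha_i v_i^n\right\|\geq c\sum_i|\alpha_i|-\tfrac{1}{n}\sum_i|\alpha_i|\geq \tfrac{c}{2}\sum_i|\alpha_i|\]
as soon as $n\geq 2/c$. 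In particular the $v_i^n$ are linearly independent for such $n$, so $F_n$ genuinely has dimension $h$, and any unit vector $z=\sum_i\alpha_i v_i^n\in F_n$ satisfies $\sum_i|\alpha_i|\leq 2/c$.

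Next I would compare $z$ with its natural lift $y:=\sum_i\alpha_i u_i\in E$. The same perturbation estimate gives $\|z-y\|\leq\frac{1}{n}\sum_i|\alpha_i|\leq \frac{2}{cn}$, which also forces $|\|y\|-1|\leq \frac{2}{cn}$. Setting $x:=y/\|y\|$ (well defined for $n$ so large that $\|y\|>0$) produces a unit vector in $E$ with
\[\|z-x\|\leq \|z-y\|+\|y-x\|\leq \|z-y\|+\bigl|\|y\|-1\bigr|\leq \tfrac{4}{cn}.\]
Since this bound is independent of the chosen unit vector $z\in F_n$, I obtain
\[\sup_{z\in F_n,\,\|z\|=1}\ \inf_{x\in E,\,\|x\|=1}\|x-z\|\leq \tfrac{4}{cn}\xrightarrow[n\to+\infty]{}0,\]
which is the desired conclusion.

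The argument is essentially routine; there is no real obstacle since everything is driven by the finite-dimensional equivalence of norms. The only mild subtlety is verifying that the lift $y$ is nonzero (so that the normalization $x=y/\|y\|$ makes sense) and that the coefficient $\sum_i|\alpha_i|$ is controlled \emph{uniformly} in $z$; both follow from the lower bound $\|\sum_i\alpha_i v_i^n\|\geq (c/2)\sum_i|\alpha_i|$ for $n$ large. Note also that the statement does not require the $v_i^n$ themselves to be linearly independent for small $n$: for the values of $n$ where $\dim F_n<h$, the estimate still holds since every element of $F_n$ can be written (non-uniquely) as $\sum\alpha_iv_i^n$, and one may simply apply the argument starting from some threshold $n_0\geq 2/c$.
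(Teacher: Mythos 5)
Your proof is correct, and it takes a noticeably different (and in some ways cleaner) route than the paper's. The paper extends the coordinate functionals $u_i^*$ of the basis $(u_1,\ldots,u_h)$ to all of $X$, forms the perturbation $T_n(x)=\sum_i u_i^*(x)(u_i-v_i^n)$, inverts $S_n=I-T_n$ by a Neumann series to get $\Vert S_n^{-1}\Vert\leq 2$, deduces that the coefficients of a unit vector of $F_n$ in the basis $(v_i^n)$ are bounded, and then finishes by a compactness/subsequence extraction ("we can suppose $z_n$ converges to a unit vector of $E$"). You reach the same key intermediate fact --- a uniform bound $\sum_i\vert\alpha_i\vert\leq 2/c$ on the coefficients of any unit vector $z=\sum_i\alpha_iv_i^n\in F_n$ --- but via the elementary equivalence of norms on $\K^h$ rather than operator inversion, and you then conclude directly by comparing $z$ with its lift $y=\sum_i\alpha_iu_i$ and normalizing, obtaining the explicit rate $4/(cn)$. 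This buys two things: you avoid both the Hahn--Banach extension of the $u_i^*$ and the final compactness argument (which in the paper is stated for a single sequence $z_n$ and is a bit terse about why it controls the supremum over all unit $z\in F_n$), and your bound is manifestly uniform in $z$, which is exactly what the $\sup$--$\inf$ statement requires. Your closing remark about small $n$ is harmless but unnecessary, since the conclusion only concerns the limit $n\to+\infty$ and your threshold $n_0\geq 2/c$ already guarantees $\dim F_n=h$ beyond it.
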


\begin{proof}
Let $n\geq N$, and $T_{n}: X\to X$ be defined by $T_{n}(x)=\sum_{i=1}^{h}u_{i}^{*}(x)(u_i-v_{i}^{n})$ for every $1\leq i\leq h$.
Remark that:
$$\Vert T_{n}\Vert=\underset{\Vert x\Vert=1}{\sup}\left\Vert \sum_{i=1}^{h}u_{i}^{*}(x)(u_i-v_{i}^{n})\right\Vert\leq\sum_{i=1}^{h}\Vert u_{i}^{*}\Vert\Vert u_i-v_{i}^{n}\Vert\leq \frac{Mh}{n}\leq \frac{1}{2}.$$
Thus, it follows that the operator $S_{n}:=I-T_{n}: X\to X$ satisfies $S_{n}(u_i)=v_{i}^{n}$ for any $1\leq i\leq h$ and is invertible with $S_{n}^{-1}=\sum_{i=0}^{+\infty}T_{n}^{i}$.
Moreover, we deduce the following upper bound for $S_{n}^{-1}$:
$$\Vert S_{n}^{-1}\Vert=\left\Vert \sum_{i=0}^{+\infty}T_{n}^{i}\right\Vert\leq \sum_{i=0}^{+\infty}\Vert T_{n}\Vert^i\leq \sum_{i=0}^{+\infty}\frac{1}{2^i}=2.$$
Set $z_{n}=a_{1}^{n}v_{1}^{n}+\ldots+a_{h}^{n}v_{h}^{n}\in F_{n}$ with $\Vert z_{n}\Vert=1$. Then,  we notice that $\Vert S_{n}^{-1}(z_{n})\Vert=\Vert a_{1}^{n}u_{1}+\ldots+a_{h}^{n}u_{h}\Vert\leq2$. Thus, $u_{1},\ldots,u_{h}$ being a linearly independent family, we deduce that the sequences of coefficients $(a_{i}^{n})_{n\in\N}$ are bounded and upon passing to a subsequence, we can suppose that $z_{n}$ converges to some vector on the unit sphere of $E$. This proves the lemma.
\end{proof}

To provide an operator as claimed in Theorem \ref{theosupnonpsup}, we need to be able to check the non-strong $h$-supercyclicity for the operator $T$ we are going to construct. The following lemma gives such a criterion.

\begin{lem}\label{lemcritnfps}
Assume that $X$ is a topological vector space and $T\in L(X)$ is cyclic with cyclic vector $e_{0}$ and set $h\geq2$. Assume that there exist $2h$ linear forms $\Phi_\delta:\K[T](e_0)\to \K$ for $0\leq \delta\leq 2h-1$, such that the maps $(P(T)e_{0},Q(T)e_{0})\mapsto \Phi_\delta((PQ)(T)e_{0})$ are continuous on $\K[T](e_0)\times \K[T](e_0)$ and satisfying for every $0\leq \delta \leq 2h-1$
$$\Phi_\delta(T^{\delta}e_{0})=1\text{ and }\Phi_\delta(T^{i}e_{0})=0\text{ for }0\leq i\neq\delta\leq 2h-1.$$
Then, $T$ is not strongly $h$-supercyclic.
\end{lem}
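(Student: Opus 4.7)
I would argue by contradiction: assume that $T$ is strongly $h$-supercyclic. Since $e_0$ is cyclic, $\K[T](e_0)$ is dense in $X$, so the $h$-dimensional subspaces admitting a basis of the form $(P_1(T)e_0,\ldots,P_h(T)e_0)$ form a dense subset of $\Pn_h(X)$. By Proposition \ref{propbirkhoffnsup}, $\mathcal{E}S_h(T)$ is a dense $G_\delta$ in $\Pn_h(X)$; a Baire category argument then produces a strongly $h$-supercyclic subspace $L=\Span(P_1(T)e_0,\ldots,P_h(T)e_0)$ admitting a polynomial basis. This reduction is crucial because the bilinear-continuity hypothesis only concerns elements of $\K[T](e_0)$.

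Next, fix the target $(y_1,\ldots,y_h):=(e_0,Te_0,\ldots,T^{h-1}e_0)\in\K[T](e_0)^h$ and invoke Proposition \ref{propbeq}(iii) to produce a strictly increasing sequence $(k_n)$ and scalars $\lambda_{i,j}^{(n)}$ satisfying
\[
R_i^{(n)}(T)e_0 := \sum_{j=1}^{h}\lambda_{i,j}^{(n)}\,T^{k_n}P_j(T)e_0 \xrightarrow[n\to\infty]{} T^{i-1}e_0,\qquad 1\le i\le h.
\]
The $R_i^{(n)}(T)e_0$ all lie in the $h$-dimensional subspace $T^{k_n}(L)$ and converge to the linearly independent tuple $(T^{i-1}e_0)_i$, hence they form a basis of $T^{k_n}(L)$ for all $n$ large enough; equivalently, the coefficient matrix $M_n:=(\lambda_{i,j}^{(n)})$ is invertible for such $n$. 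Applying the bilinear continuity of $\Phi_\delta$ to the pairs $(R_i^{(n)}(T)e_0, R_j^{(n)}(T)e_0)$ then yields, for every $\delta\in\{0,\ldots,2h-1\}$:
\[
\Phi_\delta\bigl(R_i^{(n)}R_j^{(n)}(T)e_0\bigr) \xrightarrow[n\to\infty]{} \Phi_\delta(T^{i+j-2}e_0).
\]
Setting $A^{(n,\delta)}_{p,q}:=\Phi_\delta(T^{2k_n}P_pP_q(T)e_0)$ and letting $D^\delta$ be the symmetric $h\times h$ matrix with $(i,j)$-entry equal to $1$ when $\delta=i+j-2$ and $0$ otherwise, this rewrites as the $2h$ matrix limits
\[
M_n A^{(n,\delta)} M_n^T \xrightarrow[n\to\infty]{} D^\delta,\qquad 0\le\delta\le 2h-1.
\]
Observe that $D^{h-1}$ is the anti-diagonal of ones, so $\det D^{h-1}=\pm 1$, while $D^{2h-1}=0$.

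The final step---extracting the contradiction---is the main obstacle of the argument. The $2h$ scalars $\bigl(A^{(n,\delta)}_{p,q}\bigr)_{\delta}$ are exactly the $2h$ coordinates of a single vector $T^{2k_n}P_pP_q(T)e_0\in X$ under the linear map $\Phi:=(\Phi_0,\ldots,\Phi_{2h-1})$. I would complement the above matrix limits by the one-sided constraints obtained from pairing the bilinear form with the fixed elements $T^m e_0$, namely
\[
M_n\bigl(\Phi_\delta(T^{k_n+m}P_j(T)e_0)\bigr)_{j} \xrightarrow[n\to\infty]{} \mathbf{1}_{\delta-m+1\in[1,h]}\,e_{\delta-m+1},
\]
which, via the invertibility of $M_n$, pin down the asymptotic coordinates $\Phi(T^{k_n+m}P_j(T)e_0)\in\K^{2h}$ for every admissible $m$. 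The contradiction should then follow from the observation that the full-rank requirement on $A^{(n,h-1)}$ (imposed by $\det D^{h-1}=\pm 1$), the vanishing $A^{(n,2h-1)}\to 0$ (imposed by $D^{2h-1}=0$), and the rigid polynomial structure of the $A^{(n,\delta)}$ (generated by only the $h$ polynomials $P_j$) cannot be reconciled with the $h$-dimensional flexibility of $M_n$; phrased differently, the $2h$ linear forms $\Phi_\delta$ extract more coordinate information than an $h$-dimensional orbit can support.
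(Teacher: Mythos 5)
There are two genuine gaps. The first is the reduction to a strongly $h$-supercyclic subspace with a polynomial basis. A Baire category argument guarantees that two dense $G_\delta$ sets meet; it says nothing about the intersection of a dense $G_\delta$ with a merely dense set (compare $\Q$ and $\R\setminus\Q$ in $\R$). The set of $h$-dimensional subspaces spanned by vectors of $\K[T](e_0)$ is dense in $\Pn_h(X)$ but there is no reason for it to be comeager --- on the contrary, $\K[T](e_0)$ is a countable-dimensional, hence meager, subspace of $X$ --- so you cannot conclude that $\ES_h(T)$ contains a subspace with a polynomial basis, and your whole setup rests on that. The paper circumvents this by a two-step approximation: for each $n$ it takes a genuine strongly $h$-supercyclic subspace $E_n$ within $1/n$ of $E=\Span(e_0,\ldots,T^{h-1}e_0)$, chooses $x_n,y_n\in E_n$ with $T^{m_n}x_n$ near $e_0$ and $T^{m_n}y_n$ near $T^he_0$, and only then approximates $x_n,y_n$ by polynomial vectors $P_n(T)e_0,Q_n(T)e_0$ with an error $\varepsilon_n\leq 1/(2n\Vert T^{m_n}\Vert)$ small enough that the images under $T^{m_n}$ remain controlled. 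That error control is precisely what lets one work with polynomial vectors without assuming the supercyclic subspace itself is polynomial.

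The second gap is that the contradiction is never actually extracted, and the system of relations you set up cannot by itself produce one: taking $M_n=I$ and $A^{(n,\delta)}=D^\delta$ satisfies all the two-sided limits $M_nA^{(n,\delta)}M_n^T\to D^\delta$, and the one-sided constraints can likewise be met, so ``full rank of $A^{(n,h-1)}$ versus vanishing of $A^{(n,2h-1)}$'' is not contradictory. The paper's mechanism is of a different nature. After normalising, $P_n(T)e_0/\Vert P_n(T)e_0\Vert\to a_0e_0+\cdots+a_{h-1}T^{h-1}e_0$ and $Q_n(T)e_0/\Vert Q_n(T)e_0\Vert\to b_0e_0+\cdots+b_{h-1}T^{h-1}e_0$ with some $a_j\neq0$ and $b_i\neq0$ (this is where Lemma \ref{leminfcercle} enters). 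Then the single scalar $\Vert Q_n(T)e_0\Vert/\Vert P_n(T)e_0\Vert$ is written as a quotient of two trilinear expressions and evaluated in the limit with two different functionals, $\Psi_{i,j}=\Phi_i+\Phi_{h+j}$ and $\widetilde{\Psi}_{i,j}=\Phi_i+2\Phi_{h+j}$, giving the incompatible limits $a_j/b_i$ and $2a_j/b_i$. This ``same quantity, two functionals, two answers'' device --- which exploits that $e_0\cdot(\sum_k b_kT^ke_0)$ and $T^he_0\cdot(\sum_k a_kT^ke_0)$ are supported on the disjoint index sets $\{0,\ldots,h-1\}$ and $\{h,\ldots,2h-1\}$ --- is the key idea your proposal still needs to supply.
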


\begin{rem}
In particular, $T$ does not satisfy the Supercyclicity Criterion.
\end{rem}

\begin{proof}
Assume that $T$ is strongly $h$-supercyclic on $X$ and $n\in\N$.
Set also 
\begin{equation}\label{eqrap}
 E=\Span(e_{0},\ldots,T^{h-1}e_{0})\text{, and }E_{n}\in\pi_{h}\left(B\left((e_{0},\ldots,T^{h-1}e_{0});\frac{1}{n}\right)\right)\cap\mathcal{E}S_{h}(T).
\end{equation}
Thus, there exists $m_{n}\in\mathbb{N}, x_{n},y_{n}\in E_{n}$ linearly independent such that: $T^{m_{n}}x_{n}\in B\left(e_{0};\frac{1}{2n}\right)$ and $T^{m_{n}}y_{n}\in B\left(T^{h}e_{0};\frac{1}{2n}\right)$.
Moreover, fix $\varepsilon_{n}=\min\left(\frac{1}{n},\frac{1}{2n\Vert T^{m_{n}}\Vert},\frac{\Vert x_{n}\Vert}{2n+1},\frac{\Vert y_{n}\Vert}{2n+1}\right)$, then $e_{0}$ being cyclic for $T$, there exists $P_{n}, Q_{n}\in\mathbb{K}[X]$ such that:\\
$$P_{n}(T)e_{0}\in B(x_{n};\varepsilon_{n})\text{ and }Q_{n}(T)e_{0}\in B(y_{n};\varepsilon_{n}).$$
Thus, $$T^{m_{n}}(P_{n}(T)e_{0})\in B\left(e_{0};\frac{1}{n}\right)\text{ and }T^{m_{n}}(Q_{n}(T)e_{0})\in B\left(T^{h}e_{0};\frac{1}{n}\right).$$
Pick also $a_{0}^{n}e_{0}+\ldots+a_{h-1}^{n}T^{h-1}e_{0}\in E$ such that: 
$$\begin{array}{c}
\Vert a_{0}^{n}e_{0}+\ldots+a_{h-1}^{n}T^{h-1}e_{0}\Vert=1\\
\text{ and }\\
\left\Vert a_{0}^{n}e_{0}+\ldots+a_{h-1}^{n}T^{h-1}e_{0}-\frac{P_{n}(T)e_{0}}{\Vert P_{n}(T)e_{0}\Vert}\right\Vert=\underset{x\in E, \Vert x\Vert=1}{\inf}\left\Vert x-\frac{P_{n}(T)e_{0}}{\Vert P_{n}(T)e_{0}\Vert}\right\Vert.
\end{array}$$
Then, $$\underset{x\in E, \Vert x\Vert=1}{\inf}\left\Vert x-\frac{P_{n}(T)e_{0}}{\Vert P_{n}(T)e_{0}\Vert}\right\Vert\underset{n\to+\infty}{\longrightarrow}0.$$
Let us prove this last point. First, we split the norm:
$$\inf_{x\in E, \Vert x\Vert=1}\left\Vert x-\frac{P_{n}(T)e_{0}}{\Vert P_{n}(T)e_{0}\Vert}\right\Vert\leq \inf_{x\in E, \Vert x\Vert=1}\left\Vert x-\frac{x_{n}}{\Vert x_{n}\Vert}\right\Vert+\left\Vert\frac{x_{n}}{\Vert x_{n}\Vert}-\frac{P_{n}(T)e_{0}}{\Vert P_{n}(T)e_{0}\Vert}\right\Vert$$
and it suffices to prove that each part tends to 0 when $n$ grows.
In fact, the first convergence to 0 is given by Lemma \ref{leminfcercle} (with $u_{i}= T^{i-1}(e_{0})$ and $(v_{1}^{n},\ldots,v_{h}^{n})=$basis of $E_n$ given by (\ref{eqrap})) and let us deal with the second part:
$$\begin{aligned}
\left\Vert\frac{x_{n}}{\Vert x_{n}\Vert}-\frac{P_{n}(T)e_{0}}{\Vert P_{n}(T)e_{0}\Vert}\right\Vert&=\left\Vert\frac{x_{n}}{\Vert x_{n}\Vert}-\frac{x_{n}}{\Vert P_{n}(T)e_{0}\Vert}+\frac{x_{n}}{\Vert P_{n}(T)e_{0}\Vert}-\frac{P_{n}(T)e_{0}}{\Vert P_{n}(T)e_{0}\Vert}\right\Vert&\\
&\leq\Vert x_{n}\Vert \left\vert\frac{1}{\Vert x_{n}\Vert}-\frac{1}{\Vert P_{n}(T)e_{0}\Vert}\right\vert+\frac{\Vert x_{n}-P_{n}(T)e_{0}\Vert}{\Vert P_{n}(T)e_{0}\Vert}&\\
&\leq \frac{\varepsilon_{n}}{\Vert x_{n}\Vert -\varepsilon_{n}}+\frac{\varepsilon_{n}}{\Vert x_{n}\Vert-\varepsilon_{n}}\leq \frac{1}{n}\text{ by definition of }\varepsilon_{n}.\\
\end{aligned}$$
Thus we have the expected convergence.\\
Doing the same thing, we also pick $b_{0}^{n}e_{0}+\ldots+b_{h-1}^{n}T^{h-1}e_{0}\in E$ such that: $$\Vert b_{0}^{n}e_{0}+\ldots+b_{h-1}^{n}T^{h-1}e_{0}\Vert=1$$ and $$\left\Vert b_{0}^{n}e_{0}+\ldots+b_{h-1}^{n}T^{h-1}e_{0}-\frac{Q_{n}(T)e_{0}}{\Vert Q_{n}(T)e_{0}\Vert}\right\Vert=\underset{x\in E, \Vert x\Vert=1}{\inf}\left\Vert x-\frac{Q_{n}(T)e_{0}}{\Vert Q_{n}(T)e_{0}\Vert}\right\Vert \underset{n\to+\infty}{\longrightarrow}0.$$
Moreover, extracting an appropriate strictly increasing subsequence $(s_{k})_{k\in\mathbb{N}}$ from the sequence of natural numbers, we get:
$$a_{0}^{s_{k}}e_{0}+\ldots+a_{h-1}^{s_{k}}T^{h-1}e_{0}\underset{k\to+\infty}{\longrightarrow}a_{0}e_{0}+\ldots+a_{h-1}T^{h-1}e_{0}$$
and also
$$b_{0}^{s_{k}}e_{0}+\ldots+b_{h-1}^{s_{k}}T^{h-1}e_{0}\underset{k\to+\infty}{\longrightarrow}b_0 e_{0}+\ldots+b_{h-1}T^{h-1}e_{0}$$ 
where $(a_0,\ldots,a_{h-1}),(b_0,\ldots,b_{h-1})\in\mathbb{K}^{h}\setminus\{0\}$.
\\It follows that:
$$
\begin{aligned}
\left\Vert \frac{P_{s_{k}}(T)e_{0}}{\Vert P_{s_{k}}(T)e_{0}\Vert}-(a_0 e_{0}+\ldots+a_{h-1}T^{h-1}e_{0})\right\Vert
\underset{k\to+\infty}{\longrightarrow}0\end{aligned}$$
and
$$\begin{aligned}\left\Vert \frac{Q_{s_{k}}(T)e_{0}}{\Vert Q_{s_{k}}(T)e_{0}\Vert}-(b_0 e_{0}+\ldots+b_{h-1}T^{h-1}e_{0})\right\Vert\underset{k\to+\infty}{\longrightarrow}0.
\end{aligned}$$
Since the vectors $a_{0}e_{0}+\ldots+a_{h-1}T^{h-1}e_{0}$ and $b_0 e_{0}+\ldots+b_{h-1}T^{h-1}e_{0}$ have norm one, we can choose two integers $0\leq i,j\leq h-1$ such that $a_j\neq0$ and $b_i\neq 0$.
Then, we define two linear forms which are continuous for the product on $\K[T]e_0$: 
$$\Psi_{i,j}=\Phi_i+\Phi_{h+j}\text{ and }\widetilde{\Psi}_{i,j}=\Phi_i+2\Phi_{h+j}.$$
Thus, thanks to the continuity for the product, we have 
\begin{align}
\lim_{k\to\infty}\Psi_{i,j}\left(T^{m_{s_{k}}}e_{0}.P_{s_{k}}(T)e_{0}.\frac{Q_{s_{k}}(T)e_{0}}{\Vert Q_{s_k}(T)e_0\Vert}\right)&=\lim_{k\to\infty}\Psi_{i,j}\left(T^{m_{s_{k}}}P_{s_{k}}(T)e_{0}.\frac{Q_{s_{k}}(T)e_{0}}{\Vert Q_{s_k}(T)e_0\Vert}\right)&\label{equapsi1}\\
&=\Psi_{i,j}\left(e_{0}.(b_0e_0+\ldots+b_{h-1}T^{h-1}e_0)\right)&\nonumber\\
&=b_i\text{ by definition of }\Psi_{i,j}&\nonumber\\\nonumber
\end{align}
and also
\begin{align}
\lim_{k\to\infty}\Psi_{i,j}\left(T^{m_{s_{k}}}e_{0}.\frac{P_{s_{k}}(T)e_{0}}{\Vert P_{s_{k}}(T)e_{0}\Vert}.Q_{s_{k}}(T)e_{0}\right)&= \lim_{k\to\infty}\Psi_{i,j}\left(T^{m_{s_{k}}}Q_{s_{k}}(T)e_{0}.\frac{P_{s_{k}}(T)e_{0}}{\Vert P_{s_k}(T)e_0\Vert}\right)&\label{equapsi2}\\ &=\Psi_{i,j}\left(T^he_{0}.(a_0e_0+\ldots+a_{h-1}T^{h-1}e_0)\right) &\nonumber\\
&=\Psi_{i,j}\left(a_0T^he_{0}+\ldots+a_{h-1}T^{2h-1}e_{0}\right) &\nonumber\\
&=a_j.&\nonumber\\\nonumber
\end{align}
Doing the same thing with the second linear form $\widetilde{\Psi}_{i,j}$, we obtain similar results
\begin{equation}\label{equapsi3}
\lim_{k\to\infty}\widetilde{\Psi}_{i,j}\left(T^{m_{s_{k}}}e_{0}.P_{s_{k}}(T)e_{0}.\frac{Q_{s_{k}}(T)e_{0}}{\Vert Q_{s_{k}}(T)e_{0}\Vert}\right)=b_i
\end{equation}
and
\begin{equation}\label{equapsi4}
\lim_{k\to\infty}\widetilde{\Psi}_{i,j}\left(T^{m_{s_{k}}}e_{0}.\frac{P_{s_{k}}(T)e_{0}}{\Vert P_{s_{k}}(T)e_{0}\Vert}.Q_{s_{k}}(T)e_{0}\right)=2a_j.
\end{equation}
We now consider the quotient of equations (\ref{equapsi1}) and (\ref{equapsi2}) and of equations (\ref{equapsi3}) and (\ref{equapsi4}). This gives
$$
\xymatrix{
&&\frac{\Vert Q_{s_{k}}(T)e_{0}\Vert}{\Vert P_{s_{k}}(T)e_{0}\Vert} \ar@{=}[rd] \ar@{=}[ld]&\\
&\frac{\Psi_{i,j}\left(T^{m_{s_{k}}}e_{0}.\frac{P_{s_{k}}(T)e_{0}}{\Vert P_{s_{k}}(T)e_{0}\Vert}.Q_{s_{k}}(T)e_{0}\right)}{\Psi_{i,j}\left(T^{m_{s_{k}}}e_{0}.P_{s_{k}}(T)e_{0}.\frac{Q_{s_{k}}(T)e_{0}}{\Vert Q_{s_{k}}(T)e_{0}\Vert}\right)} \ar[d]^{k\to+\infty}&&\frac{\widetilde{\Psi}_{i,j}\left(T^{m_{s_{k}}}e_{0}.\frac{P_{s_{k}}(T)e_{0}}{\Vert P_{s_{k}}(T)e_{0}\Vert}.Q_{s_{k}}(T)e_{0}\right)}{\widetilde{\Psi}_{i,j}\left(T^{m_{s_{k}}}e_{0}.P_{s_{k}}(T)e_{0}.\frac{Q_{s_{k}}(T)e_{0}}{\Vert Q_{s_{k}}(T)e_{0}\Vert}\right)} \ar[d]^{k\to+\infty} \\
 &\frac{a_j}{b_i} \ar@{=}[rr]&&\frac{2a_j}{b_i} \\
}
$$
This equality contradicts the fact that $a_j\neq0$ and $b_i\neq0$.
This contradiction proves the lemma!
\end{proof}

Assume now and for the following that $X$ is a Banach space having a normalised unconditional basis $(e_{i})_{i\in\Z_{+}}$ for which the associated forward shift is continuous.
\\We set:
$$c_{00}=\Span\{e_{i},i\in\Z_{+}\}.$$\\
Since Lemma \ref{lemcritnfps} gives a criterion for checking non-strong $h$-supercyclicity, the proof of Theorem \ref{theosupnonpsup} reduces to the proof of the following points:

\begin{subequations}
\begin{gather}
\Span\{T^{i}e_{0},i\in\Z_{+}\}=\Span\{e_{i},i\in\Z_{+}\}\label{P1}.\\ 
\K[T]e_{0}\subseteq\overline{\{\lambda T^{i}e_{0},i\in\Z_{+},\lambda\in\K\}}\label{P2}.\\ 
T \text{ is continuous}.\label{P3}\\ 
\text{ There exist }2h\text{ linear forms }\Phi_\delta:\K[T](e_0)\to \K\text{ such that for every }\delta=0,\ldots,2h-1,\label{P4}\\
\left( P(T)e_{0}, Q(T)e_{0}\right)\to\Phi_\delta\left((PQ)(T)e_{0}\right)\text{ are continuous on }\K[T](e_0)\times \K[T](e_0),\nonumber\\
\text{For every }2\leq h\leq p\text{, every } \delta\in\{0,\ldots,2h-1\}\text{ and every }0\leq i\neq\delta\leq2h-1,\label{P5}\\
\Phi_\delta\left(T^{\delta}e_{0}\right)=1\text{ and }\Phi_\delta\left(T^{i}e_{0}\right)=0.\nonumber
\end{gather}
\end{subequations}

\subsubsection{Construction of $T$}

Our construction of $T$ is a modification of the construction of Bayart and Matheron \cite[section 4.2]{Bay}. We will give the definition of the operator and theorems leading to the continuity of $T$ but we will omit some proofs because they are the same as in \cite{Bay} up to some details.

Let us begin with a few terminology.
\\Set a countable dense subset $\mathbf{Q}$ of $\K$. A sequence of polynomials $\mathbf{P}=(P_{n})_{n\in\Z_{+}}$ is said to be \textit{admissible} if $P_{0}=0$ and $\mathbf{P}$ contains all polynomials whose coefficients are in $\mathbf{Q}$.
Let also $\deg(P)$ denote the degree of $P$, $\vert P\vert_{1}$ the sum of the moduli of its coefficients and $cd(P)$ its leading coefficient.
 We are going to construct $T$ as an almost weighted forward shift in order to satisfy (\ref{P1}). Actually, we need two sequences to construct $T$: the first one is the sequence of weights $(w_{n})_{n\in\Z_{+}}$ and the second one is a strictly increasing sequence $(b_{n})_{n\in\Z_{+}}$ indexing the iterates of $e_0$ for which the shift will be perturbed.\\
We define $T$ such that the iterates of $e_{0}$ corresponding to the perturbation satisfy (\ref{P2}):
\begin{equation}\label{pert}
 \text{for every }n\in\N,\ T^{b_{n}}e_{0}=P_{n}(T)e_{0}+e_{b_{n}}
\end{equation}
and we also define:
\begin{equation}\label{opnorm}
 \text{ for every }i\in[b_{n-1},b_{n}-1[\text{ and }n\in\N,\ T(e_{i})=w_{i+1}e_{i+1} 
\end{equation}
Thus we can express the vectors $Te_{b_{n}-1}$:
$$\begin{aligned}
 T^{b_{n}}e_{0}&=T^{b_{n}-b_{n-1}}T^{b_{n-1}}e_{0}&\\
&=T^{b_{n}-b_{n-1}}(P_{n-1}(T)e_{0}+e_{b_{n-1}})&\\
&=T^{b_{n}-b_{n-1}}P_{n-1}(T)e_{0}+w_{b_{n-1}+1}\ldots w_{b_{n}-1}Te_{b_{n}-1}& 
\end{aligned}$$
And replacing $T^{b_{n}}e_{0}$ with $P_{n}(T)e_{0}+e_{b_{n}}$ yields to:
$$Te_{b_{n}-1}=\varepsilon_{n}e_{b_{n}}+f_{n}$$
where
\begin{equation}\label{fn}
\varepsilon_{n}=\frac{1}{w_{b_{n-1}+1}\cdots w_{b_{n}-1}}\text{ and }f_{n}=\frac{1}{w_{b_{n-1}+1}\cdots w_{b_{n}-1}}(P_{n}(T)e_{0}-T^{b_{n}-b_{n-1}}P_{n-1}(T)e_{0}).
\end{equation}
Obviously, this definition is non-ambiguous if $\deg(P_{n})<b_{n}-1$. We assume now that $\deg(P_{n})<b_{n}-1$ for any $n\in\N$.

We also make the following choices for the values of $(b_{n})_{n\in\Z_{+}}$ and $(w_{n})_{n\in\Z_{+}}$:
$$
b_{0}=1\\,b_{n}=(2p+1)^{n}\text{ for every }n\in\N,\\
w_{n}=4\left(1-\frac{1}{2\sqrt{n}}\right)\text{ for every }n\in\N.\\
$$
The choice of $b_{n}$ is motivated by the fact that we will need $b_1>2p$ to check (\ref{P5}).
We denote also $d_{n}=\deg(P_{n})$.\\
Defined in this way, one can quickly check that $T:c_{00}\to c_{00}$ satisfies (\ref{P1}) and (\ref{P2}) by definition.

\subsubsection{Continuity of $T$}
We are now checking (\ref{P3}).
We introduce the following terminology: one will say that $\mathbf{P}$ is controlled by a sequence of natural numbers $(c_{n})_{n\in\Z_{+}}$ if for all $n\in\Z_{+}$, $\deg(P_{n})<c_{n}$ and $\vert P_{n}\vert_{1}\leq c_{n}$. This is an easy fact that if $\underset{n\to\infty}{\limsup}\ c_{n}=+\infty$ then there exists an admissible sequence which is controlled by $(c_{n})_{n\in\Z_{+}}$. 
We also denote $\Vert\sum_{i\in\Z_{+}} x_{i}e_{i}\Vert_{1}=\sum_{i\in\Z_{+}}\vert x_{i}\vert$ the $\ell^{1}$ norm on $c_{00}$.

To check (\ref{P3}), we need the following lemma which is almost the same as Lemma 4.20 from \cite[p.~87]{Bay}. The reader should refer to it for a proof:

\begin{lem}\label{lemmajor}
 The following properties are satisfied:
\begin{subequations}
 \begin{gather}
  \varepsilon_{n}\leq1 \text{ for all }n\in\N\label{sup1}\\
\text{If }n\in\N\text{ and if }\Vert f_{k}\Vert_{1}\leq1\text{ for every }k<n\text{ then }:\label{sup2}\\
\Vert f_{n}\Vert_{1}\leq4^{\max(d_{n},d_{n-1})+1}\left(\frac{\vert P_{n}\vert_{1}}{2^{b_{n-1}}}+\vert P_{n-1}\vert_{1}\exp\left(-c\sqrt{b_{n-1}}\right)\right)\text{ where }c>0\text{ is a numerical constant}.\nonumber
 \end{gather}
\end{subequations}
\end{lem}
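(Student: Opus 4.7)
The plan is to prove (\ref{sup1}) and (\ref{sup2}) separately, the first being a one-line weight estimate and the second a careful $\ell^1$ bookkeeping argument very much in the spirit of \cite[Lemma~4.20]{Bay}. For (\ref{sup1}), I observe that $w_k = 4(1 - \frac{1}{2\sqrt{k}}) \geq 2$ for every integer $k \geq 1$, so
$$\varepsilon_n^{-1} = \prod_{k = b_{n-1}+1}^{b_n-1} w_k \geq 2^{b_n - 1 - b_{n-1}} = 2^{2p\, b_{n-1} - 1} \geq 1,$$
using $b_n = (2p+1)\, b_{n-1}$. This immediately yields $\varepsilon_n \leq 1$ and, as a by-product, the refined inequality $\varepsilon_n \leq 2^{-b_{n-1}}$, which I reuse below.

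For (\ref{sup2}), I would argue by induction on $n$, splitting $f_n = A_n + B_n$ with $A_n := \varepsilon_n P_n(T) e_{0}$ and $B_n := -\varepsilon_n T^{b_n - b_{n-1}} P_{n-1}(T) e_{0}$, and estimating each $\Vert\cdot\Vert_1$ norm. For $A_n$, I expand $P_n(T) e_0 = \sum_{j=0}^{d_n} a_{n,j} T^j e_0$ and, unrolling $T^j e_0$ through (\ref{opnorm}) and (\ref{pert}) while using the inductive hypothesis $\Vert f_k \Vert_1 \leq 1$ ($k < n$) to absorb the corrections arising at each perturbation position $b_k - 1$, I obtain $\Vert T^j e_0 \Vert_1 \leq 4^j$ for every $j \leq d_n$. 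Combined with $\varepsilon_n \leq 2^{-b_{n-1}}$, this yields $\Vert A_n \Vert_1 \leq 4^{d_n} |P_n|_1 / 2^{b_{n-1}}$, accounting for the first summand. For $B_n$, I expand $P_{n-1}(T) e_0 = \sum_{j=0}^{d_{n-1}} b_{n-1,j} T^j e_0$ and estimate $\varepsilon_n T^{b_n - b_{n-1} + j} e_0$ for $j \leq d_{n-1}$. Unrolling across the perturbation at $b_{n-1}$, the dominant pure-shift contribution is proportional to $\varepsilon_n \prod_{k = b_{n-1}+1}^{b_n - b_{n-1} + j} w_k$, whose magnitude collapses to $\prod_{k \in I}(1 - \frac{1}{2\sqrt{k}})$ on a range $I \subset (b_{n-1}, b_n)$ of length at least $b_{n-1}$, and this is bounded by $\exp(-c\sqrt{b_{n-1}})$ via $\sum_{k = b_{n-1}+1}^{b_n}\frac{1}{2\sqrt{k}} \geq c\sqrt{b_{n-1}}$. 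Secondary branches coming from further perturbation crossings are controlled by the inductive hypothesis and enjoy the same exponential decay. Summing over $j \leq d_{n-1}$ and absorbing $P_{n-1}$'s shift growth into $4^{d_{n-1}+1}$ gives $\Vert B_n \Vert_1 \leq 4^{d_{n-1}+1} |P_{n-1}|_1 \exp(-c\sqrt{b_{n-1}})$, which is the second summand.

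The main obstacle is the $\ell^1$ bookkeeping for $B_n$: every time an iterate of $T$ crosses a perturbation position $b_k$, the expansion splits into a pure-shift term and a correction involving $f_k$, producing a binary tree of contributions. One must verify that every branch in this tree is absorbed either into the prefactor $4^{d_{n-1}+1}$ or into the decay $\exp(-c\sqrt{b_{n-1}})$, with none of the branches escaping control. This combinatorial estimate is precisely \cite[Lemma~4.20]{Bay} specialised to our sequences $b_n = (2p+1)^n$ and $w_n = 4(1 - 1/(2\sqrt{n}))$, and I would follow that proof essentially verbatim, merely adapting the constants.
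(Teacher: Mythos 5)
Your route coincides with the paper's: the paper offers no proof of this lemma, stating only that it is Lemma 4.20 of \cite{Bay} ``up to some details'' and sending the reader there, which is exactly what you do for the hard half. Your proof of (\ref{sup1}) is complete and correct, as is the refinement $\varepsilon_n\le 2^{-b_{n-1}}$ (from $b_n-b_{n-1}-1=2p\,b_{n-1}-1\ge b_{n-1}$), and your treatment of $A_n$ via $\Vert T^je_0\Vert_1\le 4^j$ for $j\le d_n<b_n-1$ is sound — the inductive hypothesis enters precisely because $\Vert Te_{b_k-1}\Vert_1\le\varepsilon_k+\Vert f_k\Vert_1\le 2\le 4$ for $k<n$, and the perturbation at $b_n-1$ is never reached. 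One slip in the $B_n$ sketch should be flagged: the quantity $\varepsilon_n\prod_{k=b_{n-1}+1}^{b_n-b_{n-1}+j}w_k$ equals $\prod_{k=b_n-b_{n-1}+j+1}^{b_n-1}w_k^{-1}$, and since $w_k^{-1}=\tfrac14\bigl(1-\tfrac1{2\sqrt k}\bigr)^{-1}$ this is \emph{not} $\prod_{k\in I}\bigl(1-\tfrac1{2\sqrt k}\bigr)$ but essentially its reciprocal times $4^{-|I|}$; that term is nevertheless harmless, being at most $2^{-(b_{n-1}-j-1)}\le 2^{d_{n-1}+1}2^{-b_{n-1}}\le 4^{d_{n-1}+1}e^{-c\sqrt{b_{n-1}}}$. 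The factor $e^{-c\sqrt{b_{n-1}}}$ actually originates in the branch you dismiss as ``secondary'': bounding $\Vert T^me_0\Vert_1$ by the unperturbed growth $w_1\cdots w_m$ and cancelling against $\varepsilon_n$ leaves
$$\varepsilon_n\,w_1\cdots w_{b_n-b_{n-1}+j}=4^{j+1}\,\frac{\prod_{k=1}^{b_{n-1}}\bigl(1-\tfrac1{2\sqrt k}\bigr)}{\prod_{k=b_n-b_{n-1}+j+1}^{b_n-1}\bigl(1-\tfrac1{2\sqrt k}\bigr)}\le C\,4^{j+1}e^{-c\sqrt{b_{n-1}}},$$
because the numerator loses about $\sqrt{b_{n-1}}$ in the exponent while the denominator recovers only about $\sqrt{b_{n-1}}/\sqrt{2p}$. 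Since you ultimately defer to \cite{Bay} for this bookkeeping, exactly as the paper does, this is an imprecision in the sketch rather than a divergence of method, but the correct accounting of which product produces the exponential decay is the one nontrivial point of the lemma.
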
  

The following lemma proves that (\ref{P3}) is satisfied for an appropriate choice of $\mathbf{P}$. For the same reasons as before, see \cite[p.~88]{Bay} for a proof.

\begin{lem}\label{lemTcont}
 There exists a control sequence $(u_{n})_{n\in\Z_{+}}$ tending to infinity such that the following holds: if the sequence $\mathbf{P}$ is controlled by $(u_{n})_{n\in\Z_{+}}$ then $T$ is continuous on $c_{00}$ with respect to the topology of $X$.
\end{lem}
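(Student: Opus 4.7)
The idea is to exploit Lemma~\ref{lemmajor} so as to choose the control sequence $(u_n)$ growing slowly enough that the perturbation terms $(f_n)$ have summable $\ell^{1}$-norm; once that is achieved, $T$ will split into a uniformly weighted forward shift plus a small perturbation, each of which is continuous with respect to the $X$-topology.

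More precisely, I would fix a non-decreasing integer sequence $(u_n)$ tending to infinity but satisfying $u_n = o\bigl(\sqrt{b_{n-1}}\bigr)$, for instance $u_n := \lfloor \frac{c}{4\log 4}\sqrt{b_{n-1}}\rfloor$ for $n$ large enough, where $c$ is the constant appearing in Lemma~\ref{lemmajor}. Assume that $\mathbf{P}$ is controlled by $(u_n)$, so that $d_n, d_{n-1}, |P_n|_1, |P_{n-1}|_1 \leq u_n$. I argue by induction on $n$ that $\|f_n\|_1 \leq 2^{-n}$. Under this inductive hypothesis, Lemma~\ref{lemmajor}(b) gives
$$\|f_n\|_1 \leq 4^{u_n+1}\left(\frac{u_n}{2^{b_{n-1}}} + u_n\,e^{-c\sqrt{b_{n-1}}}\right).$$
Since $b_{n-1} = (2p+1)^{n-1}$ grows super-exponentially while $u_n$ grows only like $\sqrt{b_{n-1}}$, the exponential factor $e^{-c\sqrt{b_{n-1}}}$ dominates $4^{u_n+1}$, and the right-hand side is at most $2^{-n}$ for all $n$ large enough. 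After possibly adjusting $u_n$ on finitely many initial indices without changing the asymptotic, the induction closes and we obtain $\sum_n \|f_n\|_1 < +\infty$.

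With that summability at hand, I decompose $T = W + R$ on $c_{00}$, where $W e_i = w_{i+1}e_{i+1}$ for $i \notin \{b_n - 1 : n\in\N\}$, $W e_{b_n - 1} = \varepsilon_n e_{b_n}$, and $R e_{b_n-1} = f_n$ with $R e_i = 0$ elsewhere. Using Lemma~\ref{lemmajor}(a), every weight of $W$ is bounded by $4$, so $W$ factors as a bounded diagonal multiplier followed by the forward shift, both continuous on $X$ (the former because the normalised basis is unconditional, the latter by hypothesis). Similarly, unconditionality of the normalised basis produces a uniform bound $|x_{b_n-1}| \leq K \|x\|_X$ on the coordinate functionals; combined with $\|f_n\|_X \leq \|f_n\|_1$, this yields
$$\|Rx\|_X \leq \sum_n |x_{b_n-1}|\,\|f_n\|_X \leq K\|x\|_X \sum_n \|f_n\|_1,$$
so $R$ is continuous, and hence $T = W + R$ extends by density to a bounded operator on $X$.

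The main subtlety is the self-referential character of Lemma~\ref{lemmajor}(b): its estimate on $\|f_n\|_1$ relies on the inductive hypothesis $\|f_k\|_1 \leq 1$ for every $k < n$, so the choice of $(u_n)$ has to be made in advance and must remain compatible with the induction at every step. The super-exponential growth of $b_n$, inherited from the choice $b_n = (2p+1)^n$, is exactly what affords enough room to let $u_n\to+\infty$ while still overpowering the factor $4^{u_n+1}$ by $e^{-c\sqrt{b_{n-1}}}$.
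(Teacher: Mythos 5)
Your proof is correct and is essentially the one the paper intends: the paper defers to Bayart--Matheron \cite[p.~88]{Bay}, where the argument is exactly this --- choose $(u_n)\to\infty$ slowly enough that Lemma \ref{lemmajor} yields $\sum_n\Vert f_n\Vert_1<\infty$ by induction, then write $T$ as a forward shift with weights bounded by $4$ plus the perturbation $e_{b_n-1}\mapsto f_n$, both continuous by unconditionality of the basis and continuity of the shift. The only detail to watch is the start of the induction for small $n$, where the generic bound of Lemma \ref{lemmajor} need not yet fall below $2^{-n}$; as you observe, this concerns only finitely many terms and does not affect the summability or the conclusion.
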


\subsubsection{Construction of $\Phi_\delta$}
Since we have completed the construction of $T$ and proved that it is continuous, we need to focus on the functionals $\Phi_\delta$ satisfying (\ref{P4}) and (\ref{P5}).
We have to define $2p$ maps $\Phi_{\delta}$, $\delta\in\{0,\ldots,2p-1\}$ continuous for the product on $\K[T]e_0$.
The following lemma from \cite[p.~89]{Bay} permits to check the continuity of such functionals more easily:

\begin{lem}\label{lemphicont}
Let $\phi$ be a linear functional on $c_{00}$. Suppose that $\sum_{r,q}\vert\phi(e_{r}\cdot e_{q})\vert<\infty$.Then, the map $(x,y)\mapsto\phi(x\cdot y)$ is continuous on $c_{00}\times c_{00}$.
\end{lem}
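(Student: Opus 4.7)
The plan is to exploit bilinearity of the product on $\K[T]e_0 = c_{00}$ and the fact that for an unconditional normalised basis the coordinate functionals are uniformly bounded. Since the product $P(T)e_0 \cdot Q(T)e_0 = PQ(T)e_0$ is bilinear by construction, and each element of $c_{00}$ is a finite linear combination of the $e_i$, for $x=\sum_r x_r e_r$ and $y=\sum_q y_q e_q$ in $c_{00}$ (finite sums), I would first write
$$\phi(x\cdot y)=\sum_{r,q}x_r y_q\,\phi(e_r\cdot e_q),$$
where the sum has only finitely many nonzero terms.

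Next, I would invoke the standard fact that the coordinate (biorthogonal) functionals $e_r^*$ of a Schauder basis are continuous, and that for an unconditional normalised basis they are uniformly bounded: there exists a constant $K>0$, depending only on the unconditional constant of the basis, such that $|x_r|=|e_r^*(x)|\leq K\|x\|$ for every $x\in X$ and every $r\in\Z_+$, and similarly for $y$. Plugging this into the expansion and applying the hypothesis yields
$$|\phi(x\cdot y)|\leq\sum_{r,q}|x_r|\,|y_q|\,|\phi(e_r\cdot e_q)|\leq K^2\|x\|\,\|y\|\sum_{r,q}|\phi(e_r\cdot e_q)|=:C\,\|x\|\,\|y\|,$$
where $C<\infty$ by the summability assumption on $\phi(e_r\cdot e_q)$.

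This bound is exactly the continuity of the bilinear form $(x,y)\mapsto\phi(x\cdot y)$ with respect to the norm of $X$ on $c_{00}\times c_{00}$. There is no genuine obstacle here: the only point requiring some care is noting that for an unconditional normalised basis the biorthogonal functionals are uniformly bounded (rather than merely individually continuous), which is what allows the double sum to be controlled by $\|x\|\,\|y\|$ and not just by finite partial sums depending on the supports of $x$ and $y$.
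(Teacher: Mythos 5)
Your argument is correct and is exactly the standard one: the paper itself gives no proof of this lemma (it is quoted from Bayart--Matheron), and the intended justification is precisely the bilinear expansion $\phi(x\cdot y)=\sum_{r,q}x_ry_q\,\phi(e_r\cdot e_q)$ combined with the uniform bound $\vert x_r\vert\leq K\Vert x\Vert$ on the coordinate functionals of a normalised basis, yielding $\vert\phi(x\cdot y)\vert\leq C\Vert x\Vert\,\Vert y\Vert$. The only cosmetic remark is that unconditionality is not even needed for that uniform bound (a normalised Schauder basis already gives $\vert e_r^*(x)\vert\leq 2K_b\Vert x\Vert$ with $K_b$ the basis constant), and that the well-definedness and bilinearity of the product $\cdot$ follow from the linear independence of the vectors $T^ie_0$, which you use implicitly.
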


Let us construct the functionals $\Phi_{\delta}$.
In the following, a vector $x\in c_{00}$ is said to be supported on some set $I\subset \Z_{+}$ if $x\in \Span\{T^{i}e_{0},i\in I\}$.
We want to construct $\Phi_{\delta}$ satisfying Lemma \ref{lemphicont}. Thus, we have to be able to give an upper bound of $\vert\phi(e_{r}\cdot e_{q})\vert$.\\
Take $r\leq q$, and write $r=b_{k}+u$ and $q=b_{l}+v$ with $u\in\{0,\ldots,b_{k+1}-b_{k}-1\}$ and $v\in\{0,\ldots,b_{l+1}-b_{l}-1\}$.
By definition of $T$, we can re-express:
$$\begin{aligned}
e_{r}&=\frac{1}{w_{b_{k}+1}\cdots w_{b_{k}+u}}\left(T^{b_{k}}-P_{k}(T)\right)T^{u}(e_{0})&\\
e_{q}&=\frac{1}{w_{b_{l}+1}\cdots w_{b_{l}+u}}\left(T^{b_{l}}-P_{l}(T)\right)T^{v}(e_{0}).&\\
\end{aligned}$$
Hence, for any linear functional $\phi:c_{00}\to\K$, we have:
$$\vert\phi(e_{r}\cdot e_{q})\vert\leq\frac{1}{2^{u+v}}\vert\phi(y_{(k,u),(l,v)})\vert,$$
where $y_{(k,u),(l,v)}=(T^{b_{k}}-P_{k}(T))(T^{b_{l}}-P_{l}(T))T^{u+v}e_{0}$ because $w_{i}\geq2$ for every $i\in\Z_{+}$.\\
To ensure the convergence of the summation from Lemma \ref{lemphicont}, we set:
$$\Phi_{\delta}(T^{i}e_{0})=\begin{cases}
1\text{ if }i=\delta\\
0\text{ if }i\in\{0,b_{1}-1\}\setminus\{\delta\}\\
\Phi_{\delta}(P_{n}(T)T^{i-b_{n}}e_{0})\text{ if }i\in[b_{n},\frac{3}{2}b_{n}[\cup[2b_{n},\frac{5}{2}b_{n}[\\
0 \text{ otherwise.}
\end{cases}$$
Moreover, $\Phi_{\delta}$ is well defined on $c_{00}$ because $\deg(P_{n})+i-b_{n}<i$, hence $P_{n}(T)T^{i-b_{n}}e_{0}$ is supported in $\{0,\cdots,i-1\}$.

To ensure the continuity of $\Phi_{\delta}$, we need the following lemma which can be essentially found in \cite[p.~90]{Bay}.
\begin{lem}\label{lemphicont2}
Assume that $\deg(P_{n})<\frac{b_{n}}{3}$ for all $n\in\Z_{+}$. Then, the following properties hold whenever $0\leq k\leq l$.
\begin{subequations}
 \begin{gather}
\Phi_{\delta}(y_{(k,u),(l,v)})=0\text{ if }u+v<\frac{b_{l}}{6}\label{phicont2a}\\
\vert \Phi_{\delta}(y_{(k,u),(l,v)})\vert\leq M_{l}(\mathbf{P}):=\max_{0\leq j\leq l}(1+\vert P_{j}\vert_{1})^{2}\prod_{j=1}^{l+1}\max(1,\vert P_{j}\vert_{1})^{2}.\label{phicont2b}
 \end{gather}
\end{subequations}
\end{lem}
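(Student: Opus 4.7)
The heart of the proof is the observation that $\Phi_\delta$ is engineered to annihilate shifted perturbations. I would first establish the auxiliary claim that $\Phi_\delta(T^j e_{b_n})=0$ for every $j\in[0,b_n/2)\cup[b_n,3b_n/2)$. Indeed, (\ref{pert}) gives $e_{b_n}=T^{b_n}e_0-P_n(T)e_0$, so $T^j e_{b_n}=T^{b_n+j}e_0-P_n(T)T^j e_0$; in the stated ranges the index $b_n+j$ lies in $[b_n,3b_n/2)\cup[2b_n,5b_n/2)$, where the third clause in the definition of $\Phi_\delta$ yields exactly $\Phi_\delta(T^{b_n+j}e_0)=\Phi_\delta(P_n(T)T^j e_0)$, so the difference vanishes.

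For (a), rewrite $y_{(k,u),(l,v)}=(T^{b_k}-P_k(T))T^{u+v}e_{b_l}$ and expand further. The hypothesis $u+v<b_l/6$, combined with $b_k\leq b_{l-1}=b_l/(2p+1)\leq b_l/5$ (available since $p\geq 2$) and $\deg P_n<b_n/3$, forces every shifted index $j$ appearing in $T^{b_k+u+v}e_{b_l}$, $P_k(T)T^{u+v}e_{b_l}$ (when $k<l$) and additionally $T^{b_l+u+v}e_{b_l}$, $P_l(T)T^{u+v}e_{b_l}$ (when $k=l$) to fall inside $[0,b_l/2)\cup[b_l,3b_l/2)$. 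Applying the auxiliary claim gives $\Phi_\delta(y_{(k,u),(l,v)})=0$ in both cases.

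For (b), define $C_n:=\sup_{j<b_n}\vert\Phi_\delta(T^j e_0)\vert$ and show by induction that $C_n\leq\prod_{m=1}^{n-1}\max(1,\vert P_m\vert_1)^2$. The base $C_1\leq 1$ is immediate from the first two clauses of the definition. For the step, the third clause yields $\vert\Phi_\delta(T^j e_0)\vert\leq\vert P_n\vert_1 C_n$ on $[b_n,3b_n/2)$ (since all indices in $P_n(T)T^{j-b_n}e_0$ lie in $[0,5b_n/6)\subset[0,b_n)$), vanishes on $[3b_n/2,2b_n)\cup[5b_n/2,b_{n+1})$, and on $[2b_n,5b_n/2)$ the relevant indices lie in $[b_n,11b_n/6)\subset[b_n,2b_n)$ where the freshly obtained bound applies, giving $\vert P_n\vert_1^2 C_n$. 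Hence $C_{n+1}\leq\max(1,\vert P_n\vert_1)^2 C_n$. Expanding $y_{(k,u),(l,v)}$ into its four monomial-type terms, every appearing index is bounded by $b_k+b_l+u+v<b_{k+1}+b_{l+1}\leq 2b_{l+1}<b_{l+2}$, so each $\vert\Phi_\delta(\cdot)\vert\leq C_{l+2}$ times the obvious coefficient factor $1$, $\vert P_l\vert_1$, $\vert P_k\vert_1$, or $\vert P_k\vert_1\vert P_l\vert_1$; summing yields $(1+\vert P_k\vert_1)(1+\vert P_l\vert_1)C_{l+2}\leq M_l(\mathbf{P})$.

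The delicate step is the arithmetic bookkeeping in (a): after expanding both factors in $(T^{b_k}-P_k(T))T^{u+v}e_{b_l}$, every shifted index must land in the annihilation region $[0,b_l/2)\cup[b_l,3b_l/2)$. This relies on the full strength of $b_{l-1}/b_l\leq 1/5$ (valid exactly because $p\geq 2$) together with the hypothesis $\deg P_n<b_n/3$; weakening either bound would shrink the safe region below what is needed, and the rest is a clean recursive bound.
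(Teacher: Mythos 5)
Your argument is correct, and since the paper gives no proof of this lemma at all (it simply defers to \cite{Bay}), what you have written is a complete, self-contained version of the Bayart--Matheron argument rather than a divergence from the paper's method. The auxiliary identity $\Phi_\delta(T^je_{b_n})=\Phi_\delta(T^{b_n+j}e_0)-\Phi_\delta(P_n(T)T^je_0)=0$ for $j\in[0,b_n/2)\cup[b_n,3b_n/2)$ is exactly the cancellation the third clause of the definition of $\Phi_\delta$ was designed to produce; your index bookkeeping for (\ref{phicont2a}) (using $b_k\le b_{l-1}\le b_l/5$ when $k<l$, $\deg P_k<b_k/3$ and $u+v<b_l/6$, with the separate case $k=l$ landing in $[b_l,3b_l/2)$) checks out; and the recursion $C_{n+1}\le\max(1,\vert P_n\vert_1)^2C_n$ together with the coefficient count $(1+\vert P_k\vert_1)(1+\vert P_l\vert_1)$ and the bound $b_k+b_l+u+v<2b_{l+1}<b_{l+2}$ gives (\ref{phicont2b}). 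One caveat: your auxiliary claim, and in fact assertion (\ref{phicont2a}) as literally stated, fails in the degenerate case $n=l=0$, since with $b_0=1$ and $P_0=0$ one has $y_{(0,0),(0,0)}=T^2e_0$ and $\Phi_2(T^2e_0)=1$; the clauses defining $\Phi_\delta$ overlap on $[b_0,3b_0/2)\subset[0,b_1)$, where the first two clauses must take precedence. This is a defect of the statement rather than of your proof, and it is harmless for the intended application (only large $l$ matters for the summability required in Lemma \ref{lemphicont}, and (\ref{phicont2b}) still covers the finitely many small terms), but you should restrict the auxiliary claim to $n\ge1$ and (\ref{phicont2a}) to $l\ge1$.
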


The next proposition \cite[p.~91]{Bay} makes use of the two previous lemmas to ensure the continuity of $(x,y)\mapsto \Phi_{\delta}(x\cdot y)$ if $\mathbf{P}$ is suitably chosen:
\begin{prop}\label{propphicont}
There exists a control sequence $(v_{n})_{n\in\Z_{+}}$ such that the following holds: if the enumeration $\mathbf{P}$ is controlled by $(v_{n})_{n\in\Z_{+}}$, then the map $(x,y)\mapsto \Phi_{\delta}(x\cdot y)$ is continuous on $c_{00}\times c_{00}$.
\end{prop}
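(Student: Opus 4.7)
The plan is to invoke Lemma \ref{lemphicont}, which reduces the continuity of $(x,y)\mapsto\Phi_\delta(x\cdot y)$ to showing the convergence of the double series $\sum_{r,q}\vert \Phi_\delta(e_r\cdot e_q)\vert$. I would start from the bound already established in the text, namely
$$\vert\Phi_\delta(e_r\cdot e_q)\vert\leq\frac{1}{2^{u+v}}\vert\Phi_\delta(y_{(k,u),(l,v)})\vert,$$
where $r=b_k+u$ and $q=b_l+v$ with $0\leq u<b_{k+1}-b_k$, $0\leq v<b_{l+1}-b_l$, and, by symmetry, $k\leq l$. The two key inputs are then Lemma \ref{lemphicont2}: part (\ref{phicont2a}) allows me to discard all terms with $u+v<b_l/6$, and part (\ref{phicont2b}) gives a uniform bound $M_l(\mathbf{P})$ on the remaining values of $\vert\Phi_\delta(y_{(k,u),(l,v)})\vert$.

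Regrouping the series by the index $l$ and summing over $k\in\{0,\ldots,l\}$ and over $u+v\geq b_l/6$, the tail estimate
$$\sum_{u+v\geq N}2^{-(u+v)}=\sum_{s\geq N}(s+1)2^{-s}\leq C\,N\,2^{-N}$$
shows that the total contribution is dominated (up to a constant) by
$$\sum_{l\geq 0}(l+1)\,b_l\,M_l(\mathbf{P})\,2^{-b_l/6}.$$
So it remains to choose the control sequence $(v_n)_{n\in\Z_{+}}$ so that, assuming $\vert P_n\vert_1\leq v_n$ and $\deg(P_n)<v_n$ for all $n$, this last series converges. Using the explicit form of $M_l(\mathbf{P})$, one has an upper bound
$$M_l(\mathbf{P})\leq\prod_{j=0}^{l+1}(1+v_j)^4,$$
so it suffices to impose both $v_n<b_n/3$ (so that Lemma \ref{lemphicont2} applies) and $\prod_{j=0}^{l+1}(1+v_j)^4\leq 2^{b_l/12}$ for all $l$. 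Since $b_n=(2p+1)^n$ grows super-exponentially, this can be realised by any sufficiently slowly growing sequence; for instance one may take $v_n:=\lfloor\log_2(1+b_n)^{1/(n+3)}\rfloor$, or simply define $v_n$ recursively so that at each step all finitely many conditions are met while keeping $v_n\to\infty$.

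The key choice is finally to take the pointwise minimum of this sequence with the control sequence $(u_n)_{n\in\Z_{+}}$ provided by Lemma \ref{lemTcont}, yielding a single sequence $(v_n)_{n\in\Z_{+}}$ with $v_n\to\infty$ which simultaneously ensures: the existence of an admissible enumeration $\mathbf{P}$ controlled by $(v_n)_{n\in\Z_{+}}$, the continuity of $T$, the degree condition $\deg(P_n)<b_n/3$ needed in Lemma \ref{lemphicont2}, and the convergence of the series above. By Lemma \ref{lemphicont}, this yields the continuity of $(x,y)\mapsto\Phi_\delta(x\cdot y)$ on $c_{00}\times c_{00}$, as desired.

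The main technical obstacle is the balance in the choice of $(v_n)$: it must be loose enough that $\limsup v_n=+\infty$ (so that admissible sequences in the sense of the paper exist and the functionals $\Phi_\delta$ remain meaningful) and stringent enough that the product $\prod_{j\leq l+1}(1+v_j)^4$ stays far below the $2^{b_l/6}$ gain coming from the tail of the geometric series. The super-exponential growth of $b_l$ against a polynomial or sub-exponential growth permitted for $v_n$ gives enough room, but writing the estimates compactly requires care in keeping track of the dependence of $M_l(\mathbf{P})$ on the full profile of $\mathbf{P}$ up to index $l+1$.
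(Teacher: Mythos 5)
Your proof is correct and follows essentially the same route as the paper (which defers the details to Bayart--Matheron): reduce via Lemma \ref{lemphicont} to the summability of $\sum_{r,q}\vert\Phi_\delta(e_r\cdot e_q)\vert$, kill the terms with $u+v<b_l/6$ and bound the rest by $M_l(\mathbf{P})$ using Lemma \ref{lemphicont2}, and then pick the control sequence recursively so that $\prod_{j\leq l+1}(1+v_j)^4$ is beaten by $2^{b_l/12}$, intersecting with the sequence of Lemma \ref{lemTcont} and the constraint $\deg(P_n)<b_n/3$. The only blemish is your explicit candidate $v_n=\lfloor\log_2(1+b_n)^{1/(n+3)}\rfloor$, which stays bounded (since $\frac{1}{n+3}\log_2(1+b_n)\to\log_2(2p+1)$) rather than having $\limsup v_n=+\infty$ as admissibility requires, but the recursive construction you offer as the alternative is exactly right, so the argument stands.
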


Thus, by Lemmas \ref{lemTcont} and \ref{lemphicont2}, with a well-chosen control sequence, $T$ is continuous and $\Phi_{\delta}$, $\delta\in\{0,\ldots,2p-1\}$ are continuous for the product defined on $\K[T]e_{0}$ and therefore (\ref{P4}) is satisfied.
Then, a simple computation proves that $\Psi$ satisfies (\ref{P5}) because $\Phi_{\delta}(T^i e_0)=\begin{cases}
                                                                                                    1\text{ if }i=\delta\\
												    0\text{ if not}
                                                                                                   \end{cases}$ for every $0\leq i\leq b_1-1=2p$.

The combination of Lemma \ref{lemcritnfps}, Lemma \ref{lemTcont} and Proposition \ref{propphicont} completes the proof of Theorem \ref{theosupnonpsup}.

\subsection{A supercyclic operator which is not strongly $p$-supercyclic for any $p\geq2$}

The previous example of a supercyclic operator which is not strongly $h$-supercyclic for a fixed $h$ is answering the question of the existence of strongly $h$-supercyclic operators which are not strongly $(h+1)$-supercyclic in the particular case $h=1$. In fact, we can improve this result:
\begin{theo}\label{supnonnsup}
There exists a supercyclic operator which is not strongly $h$-supercyclic for any $h\geq2$.
\end{theo}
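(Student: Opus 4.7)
The plan is to adapt the construction of Theorem~\ref{theosupnonpsup} so that a single operator fails strong $h$-supercyclicity for every $h\ge 2$ simultaneously. I would keep the same framework: an almost weighted forward shift perturbed at indices $(b_n)_{n\in\Z_{+}}$ through an admissible sequence $\mathbf{P}=(P_n)_{n\in\Z_{+}}$ satisfying $T^{b_n}e_0=P_n(T)e_0+e_{b_n}$, with the weights $w_n=4\bigl(1-\frac{1}{2\sqrt{n}}\bigr)$ unchanged. The only modification to the operator itself is to free $(b_n)$ from its former dependence on a fixed $p$ by imposing a much faster growth, for example $b_0=1$ and $b_{n+1}=2^{b_n}$, so that $b_n\to+\infty$ arbitrarily fast. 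Continuity of $T$ on $c_{00}$ is then obtained through the same estimates as in Lemmas~\ref{lemmajor} and~\ref{lemTcont}, yielding a control sequence $(u_n)_{n\in\Z_{+}}$ tending to infinity.

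Next, for each $h\ge 2$ \emph{separately}, I would build a family of $2h$ functionals $\Phi^{(h)}_0,\ldots,\Phi^{(h)}_{2h-1}$ verifying the hypotheses of Lemma~\ref{lemcritnfps} for that value of $h$. Letting $n_h$ be minimal with $b_{n_h}\ge 2h$, define
\begin{equation*}
\Phi^{(h)}_{\delta}(T^{i}e_{0})=\begin{cases}
1 & \text{if } i=\delta,\\
0 & \text{if } i\in [0,b_{n_h})\setminus\{\delta\},\\
\Phi^{(h)}_{\delta}\bigl(P_{n}(T)T^{i-b_{n}}e_{0}\bigr) & \text{if } i\in [b_n,\tfrac{3}{2}b_n)\cup[2b_n,\tfrac{5}{2}b_n),\ n\ge n_h,\\
0 & \text{otherwise},
\end{cases}
\end{equation*}
which is the recipe of Theorem~\ref{theosupnonpsup} with $b_1$ replaced by the $h$-dependent threshold $b_{n_h}$. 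Condition~(P5) for $h$ is immediate since $2h\le b_{n_h}$, and the proofs of Lemma~\ref{lemphicont2} and Proposition~\ref{propphicont} adapt verbatim to produce a control sequence $(v^{(h)}_n)_{n\in\Z_{+}}$ such that $(x,y)\mapsto\Phi^{(h)}_\delta(xy)$ is continuous on $c_{00}\times c_{00}$ for every $\delta\in\{0,\ldots,2h-1\}$ whenever $\mathbf{P}$ is controlled by $(v^{(h)}_n)$.

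The main obstacle is to produce a single admissible $\mathbf{P}$ that is simultaneously controlled by $(u_n)$ and by every $(v^{(h)}_n)$, $h\ge 2$. This is handled by a diagonal argument: given any function $f:\Z_{+}\to\Z_{+}$ with $f(n)\to +\infty$, for each $h\ge 2$ choose $N_h$ with $v^{(h)}_n\ge f(n)$ for $n\ge N_h$, put $k(n)=\max\{h\ge 2 : \max(N_2,\ldots,N_h)\le n\}$, and set $c_n=\min\bigl(u_n,v^{(2)}_n,\ldots,v^{(k(n))}_n\bigr)$. Then $k(n)\to+\infty$, $c_n\to+\infty$, and any admissible $\mathbf{P}$ controlled by $(c_n)$ (which exists by the standard existence remark recalled in~\cite[Section~4.2]{Bay}) makes $T$ continuous while keeping every $\Phi^{(h)}_\delta$ continuous for the product. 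Supercyclicity of $T$ then follows exactly as in Theorem~\ref{theosupnonpsup}, since admissibility of $\mathbf{P}$ combined with $T^{b_n}e_0-e_{b_n}\in\K[T]e_0$ guarantees density of the projective orbit of $e_0$ regardless of the growth of $(b_n)$. Applying Lemma~\ref{lemcritnfps} for each $h\ge 2$ with the family $\Phi^{(h)}_0,\ldots,\Phi^{(h)}_{2h-1}$ then shows that $T$ is not strongly $h$-supercyclic. The delicate verification is that the estimates of Lemmas~\ref{lemmajor} and~\ref{lemphicont2} depend only on the tail of $\mathbf{P}$ from a suitable index onwards, ensuring that the diagonal sequence $(c_n)$ genuinely controls all required quantities.
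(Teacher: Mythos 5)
Your strategy is genuinely different from the paper's. The paper does not try to make one operator on $X$ defeat every $h$ at once: it builds, for each $p\geq 2$, an operator $T_p$ that is supercyclic but not strongly $h$-supercyclic for $2\leq h\leq p$ (using a modified enumeration $\mathbf{P}^p$ whose first $2p$ terms vanish so that the functionals can be defined despite $b_1<2p$), and then takes $T=\oplus_{\ell_2}T_p$. Non-strong $h$-supercyclicity of the sum is then free from Remark \ref{remsommdir} applied to the summand $T_h$, and all the real work goes into proving that the $\ell_2$-direct sum is still \emph{supercyclic} — which is delicate, since supercyclicity is not stable under direct sums; this is why the joint enumeration $(Q_n)$ is engineered to contain blocks $(\lambda_kP,\ldots,\lambda_kP,0,\ldots)$ with $\lambda_k$ and the block length growing. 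Your single-operator route inverts the difficulty: supercyclicity is immediate from admissibility of one sequence $\mathbf{P}$, but you must produce infinitely many families $\Phi^{(h)}_\delta$ on the same $\K[T]e_0$ and control $\mathbf{P}$ simultaneously for all of them. I believe this can be made to work, but two steps are not ``verbatim'' and deserve explicit treatment. First, with your $h$-dependent threshold the defining identity $\Phi^{(h)}_\delta(T^{b_n+j}e_0)=\Phi^{(h)}_\delta(P_n(T)T^je_0)$ is only imposed for $n\geq n_h$, whereas the proof of the vanishing statement (\ref{phicont2a}) uses that identity; you need to check that it is only invoked at the \emph{larger} index $l$ of the pair $(k,l)$, and that the finitely many pairs with $l<n_h$ contribute finitely many bounded terms to $\sum_{r,q}\vert\Phi^{(h)}_\delta(e_r\cdot e_q)\vert$. (The paper sidesteps exactly this point differently, by forcing $P_n=0$ for the small indices so the identity holds for all $n$.) Second, your diagonal sequence $(c_n)$ only makes $\mathbf{P}$ controlled by $(v^{(h)}_n)$ from some index $g(h)$ onward, so Proposition \ref{propphicont} must be restated in a ``tail'' form; this works because a finite head only multiplies $M_l(\mathbf{P})$ by a fixed constant, but it must be said, and your choice of an auxiliary $f$ with $v^{(h)}_n\geq f(n)$ eventually is itself only possible if $f$ is built \emph{after} the $v^{(h)}$ by a diagonal argument, not fixed in advance. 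Finally, the super-fast growth $b_{n+1}=2^{b_n}$ is harmless but unnecessary — any increasing $(b_n)$ with $b_n\to\infty$ gives you thresholds $n_h$. With those points repaired, your argument yields the theorem on the original space $X$ rather than on $\oplus_{\ell_2}X_p$, which is a small bonus over the paper's construction.
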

To achieve this construction, we take a direct sum of some previously constructed operators to ensure the non-strong $h$-supercyclicity of this operator.
The first part of this section is devoted to the construction of infinitely many operators $T_{p}$ which are supercyclic but not strongly $h$-supercyclic for $2\leq h\leq p$. Moreover, we want that these operators satisfy some more properties to be able to ``match`` them later. For that purpose we have to modify the parameter $(b_n)_{n\in\Z_{+}}$ and the admissible sequence of polynomials $\mathbf{P}$ that we used in the previous section. From now on, we take $b_n=5^n$ for every $n\in\N$ and $b_0=0$.

\subsubsection{Construction of many operators with different parameters}
Given infinitely many increasing control sequences $(u_{n}^{i})_{n\in\Z_{+}}$ satisfying $\lim_{n\to+\infty}\ u_{n}^{i}=+\infty$ for every $i\geq2$, then it is possible to consider an enumeration $(S_{n}^{i})_{n\in\Z_{+}}$ not necessarily bijective of $(\mathbf{Q}[X])^{i+1}\times \{0\}^{\Z_{+}}$ for every $i\in\Z_{+}$ with the following properties: for every $i,k,n\in\Z_{+}$, and $0\leq j\leq b_{i+1}$, $S_{j}^{i}(k)=0$, $\deg(S_{n}^{i}(k))\leq u_{n}^{k+2}$ and $\vert S_{n}^{i}(k)\vert_{1}\leq u_{n}^{k+2}$.
These enumerations will be useful to construct infinitely many admissible sequences $(\mathbf{P}^k)_{k\in\Z_{+}}$, providing a construction of the desired operator, with the procedure explained below.
\\For every $j\in\Z_{+}$ and every $n\in\left[\frac{j(j+1)}{2},\frac{(j+1)(j+2)}{2}\right[$, we define: $$Q_{n}=S_{\frac{j(j+3)}{2}-n}^{n-\frac{j(j+1)}{2}}\text{ and }Q_{0}=S_{0}^{0}.$$
We set also for every $k\geq2$ and every $n\in\Z_{+}$, $\mathbf{P}_{n}^{k}:=Q_{n}(k-2)$.
\\\includegraphics[trim = 0mm 35mm 00mm 13mm, clip,scale=0.75]{suiteadmissible1.pdf}
The construction of $(Q_n)_{n\in\Z_{+}}$ is made with two purposes in mind: on the one hand every element from $\mathbf{Q}[X]$ must appear once as the $k$-th component of some $Q_n$ where the other components are all zero and this has to be satisfied for any $k$. This property allows $T$ to be cyclic. On the other hand, to turn cyclicity into supercyclicity, for every element $P$ from $\mathbf{Q}[X]$, we need to be able to find infinitely many $Q_n$ containing repetitions of $\lambda P$ on their firsts components and zeros elsewhere where $\lambda$ and the number of repetitions grow with $n$.
\\Coming back to the previously defined sequences $\mathbf{P}^{k}$, we state that such sequences are admissible and controlled by $(u_{n}^{k})_{n\in\Z_{+}}$ for every $k\geq2$.
Indeed, for every $n\in\Z_{+}$ and $k\geq2$, $\mathbf{P}^{k}_{n}=Q_{n}(k-2)$ hence there exists $q,r\leq n$ such that $\mathbf{P}^{k}_{n}=S^{q}_{r}(k-2)$.
Then, $\deg(S^{q}_{r}(k-2))\leq u_{r}^{k}$ by definition and therefore $\deg(S^{q}_{r}(k-2))\leq u_{n}^{k}$ because $(u_{n}^{k})_{n\in\Z_{+}}$ is increasing. The same argument shows that $\vert\mathbf{P}_{n}^{k}\vert_{1}\leq u_{n}^{k}$.
\\For the admissibility, we first notice that $S_{0}^{0}(i)=0$ for all $i\in\Z_{+}$. Moreover for every $k\geq2$, $\mathbf{P}^k$ is an enumeration of $\mathbf{Q}[X]$ because $\mathbf{Q}[X]=\{S_{n}^{k-2}(k-2),n\in\Z_{+}\}\subseteq\{\mathbf{P}_{n}^{k},n\in\Z_{+}\}$. 

\claim
These sequences have the additional property that $\mathbf{P}_{j}^{k}=0$ for every $k\geq2$ and $0\leq j\leq 2k$.
\begin{proof}
Let $k\geq2$ and $0\leq j\leq 2k$, $\mathbf{P}_{j}^{k}=Q_{j}(k-2)=S_{r}^{q}(k-2)$ for some $q,r\in\Z_{+}$ with $q+r\leq j$ by definition of $Q$.
By definition, if $0\leq r\leq b_{q+1}$, $S_{r}^{q}(k-2)=0$.
\\Then, if $b_{q+1}<r$, we have $b_{q+1}+q<r+q\leq j\leq 2k$, giving $\frac{5^{q+1}+q-4}{2}\leq k-2$.
In addition, if $q\geq1$, an easy computation yields to $q+1<\frac{5^{q+1}+q-4}{2}$. Hence, we get $q+1<k-2$ and thus $S_{r}^{q}(k-2)=0$ because $S_{r}^{q}\in\left(\mathbf{Q}[X]\right)^{q+1}\times\{0\}^{\Z_{+}}$.
\\It remains to study the case with $q=0$ and $5=b_{1}<r$ but $S_{r}^{q}(k-2)=0$ if $k-2>1\Leftrightarrow k\neq2$.
Furthermore, if $k=2$, then $q\neq0$ because otherwise we would have the inequality $5=b_1<r\leq j\leq 2k=4$.
\\This finishes the proof of the claim.
\end{proof}

Assume that $X$ is a Banach space with an unconditional normalised basis $(e_{i})_{i\in\Z_{+}}$ for which the associated forward shift is continuous.
For every $p\geq2$, set $X_p:=X$, $(e_{i}^{p})_{i\in\Z_{+}}:=(e_{i})_{i\in\Z_{+}}$ the unconditional basis of $X_p$ and define an operator $T_p$ on $X_p$ in the same way we did it in the last section but with parameters $p$, $(b_n)_{n\in\Z_{+}}$ and with admissible sequence $\mathbf{P}^p$ constructed above.
The changes on some parameters do not interfere with conditions (\ref{P1}), (\ref{P2}) and (\ref{P3}) which are still satisfied, thus $T_p$ is well-defined and continuous on $X_p$. In addition, it appears from the proof of Lemma \ref{lemTcont} that $\Vert T_p\Vert\leq \sup(4C_{u}^{p}\Vert F_p\Vert,2C_{u}^{p})$ where $C_{u}^{p}$ is the unconditional constant of $(e_{i}^{p})_{i\in\Z_{+}}$ and $F_p$ is the forward shift on $X_p$.
\\The delicate part is the construction of the linear forms because we use the condition $b_1>2p$ to construct the functionals $\Phi_{\delta}$ and check (\ref{P5}).
Here we have chosen to take $b_n=5^n$ for every $n\in\N$, then we have changed the admissible sequence $\mathbf{P}^p$ to be able to construct the functionals $\Phi_{\delta}$. Indeed, the first components of $\mathbf{P}^p$ contains only zeros to compensate for the fact that $b_{1}<2p$.
We define for every $\delta\in\{0,\ldots,2p-1\}$:
$$\Phi_{\delta}(T^{i}e_{0})=\begin{cases}
1\text{ if }i=\delta\\
0\text{ if }i\in\{0,b_m-1\}\setminus\{\delta\}\\
\Phi_{\delta}(P_{n}(T)T^{i-b_{n}}e_{0})\text{ if }i\in[b_{n},\frac{3}{2}b_{n}[\cup[2b_{n},\frac{5}{2}b_{n}[\text{ for }n\geq m\\
0 \text{ otherwise.}
\end{cases}$$
where $m\in\N$ is such that $b_{m-1}<2p< b_m$.
\\Then, $\Phi_{\delta}$ is well-defined thanks to the claim and (\ref{P5}) and (\ref{P4}) are also satisfied.
As a consequence, despite some changes on the parameters $T_p$ is supercyclic and not strongly $h$-supercyclic for $2\leq h\leq p$ on $X_p$ for a suitable choice of increasing control sequences.

\subsubsection{Construction of the supercyclic operator being not strongly $h$-supercyclic, $h\geq2$}
The next step is to consider the direct sum of previously constructed operators to get rid of the strong $h$-supercyclicity.
Hence, define $T=\oplus_{\ell_{2}}T_p$ an operator on $B=\oplus_{\ell_{2}}X_p$ for $p\geq 2$.
Then, considering on the first part the weighted forward shift part $R$ of $T$ and then the perturbation part $K$, we get:
$$\Vert T\Vert\leq\Vert R\Vert+\Vert K\Vert\leq 4\sup_{p\geq2}(C_{u}^{p}\Vert F_p\Vert)+2\sup_{p\geq2}(C_{u}^{p})<+\infty$$
because for every $p\geq2$, $X_p=X$.
So, $T$ is continuous on $B$ and is not strongly $p$-supercyclic for $p\geq2$.
Thus, it suffices to prove that $T$ is supercyclic with supercyclic vector $\oplus_{\ell^{2}}\frac{e_{0}^{p}}{p}$.
For this purpose we are going to prove that $T$ satisfies the two following conditions:
\begin{subequations}
\begin{gather}
\left\{(0,\cdots,0,e_{i}^{p},0,\cdots),i\in\Z_{+},p\geq2\right\}\subset\overline{\Span\left\{T^{i}\left(\oplus_{\ell_2} \frac{e_{0}^{p}}{p}\right),i\in\Z_{+}\right\}}\label{P21}.\\ 
\Span\left\{T^{i}\left(\oplus_{\ell_2} \frac{e_{0}^{p}}{p}\right),i\in\Z_{+}\right\}\subseteq\overline{\left\{\lambda T^{i}\left(\oplus_{\ell_2} \frac{e_{0}^{p}}{p}\right),i\in\Z_{+},\lambda\in\K\right\}}\label{P22}.
\end{gather}
\end{subequations}
The condition (\ref{P22}) is satisfied with our construction of admissible sequences. Indeed, it suffices to prove (\ref{P22}) for $\Span_{\mathbf{Q}}\left\{T^{i}\left(\oplus \frac{e_{0}^{p}}{p}\right),i\in\Z_{+}\right\}$.
Let then $P\in\mathbf{Q}[X]$, there exists by definition of $Q_k$ three strictly increasing sequences of integers $(n_k)_{k\in\Z_{+}}$,$(m_k)_{k\in\Z_{+}}$ and $(\lambda_k)_{k\in\Z_{+}}$ such that for every $k$, $Q_{n_k}=\left(\underbrace{\lambda_k P,\ldots,\lambda_k P}_{m_k\text{ times}},0,\cdots\right)$.\\
Hence, $T^{b_{n_{k}}}\left(\oplus_{\ell_2}\frac{e_{0}^{p}}{p}\right)=\left(\lambda_k P(T)\left(\frac{e_{0}^{2}}{2}\right)+\frac{e_{b_{n_{k}}}^{2}}{2},\ldots,\lambda_kP(T)\left(\frac{e_{0}^{m_k+1}}{m_k+1}\right)+\frac{e_{b_{n_{k}}}^{m_k+1}}{m_k+1},\frac{e_{b_{n_{k}}}^{m_k+2}}{m_k+2},\cdots\right)$. \\Thus, 
\begin{align*}
&\left\Vert \frac{1}{\lambda_k}T^{b_{n_{k}}}\left(\oplus_{\ell_2}\frac{e_{0}^{p}}{p}\right)-P(T)\left(\oplus_{\ell_2} \frac{e_{0}^{p}}{p}\right)\right\Vert_{\ell_2}\\
=&\left\Vert\left(\frac{e_{b_{n_{k}}}^{2}}{2\lambda_k},\ldots,\frac{e_{b_{n_{k}}}^{m_k+1}}{(m_k+1)\lambda_k},\frac{e_{b_{n_{k}}}^{m_k+2}}{(m_k+2)\lambda_k}-P(T)\left(\frac{e_{0}^{m_k+2}}{m_k+2}\right),\ldots\right)\right\Vert_{\ell_2}\underset{k\to+\infty}{\longrightarrow}0.
\end{align*}
This proves (\ref{P22}).

We now focus on (\ref{P21}).
Let $i\in\Z_{+}$ and $q\geq2$. The definition of $Q_k$ and the supercyclicity of $T_p$ implies that there exists a strictly increasing sequence of integers $(n_k)_{k\in\Z_{+}}$ such that $Q_{n_{k}}=\left(0,\ldots,0,P_k,0,\ldots\right)$ for all $k\in\Z_{+}$ where $P_k$ is a polynomial such that: $P_k (T)e_{0}^{q}=\lambda_k e_{i}^{q}+\varepsilon_k$ where $(\lambda_k)_{k\in\Z_{+}}$ is a strictly increasing sequence of positive real numbers tending to $+\infty$ and $\Vert \varepsilon_k\Vert\underset{k\to+\infty}{\longrightarrow}0$.
Thus, 
\begin{align*}
&\left\Vert \frac{q}{\lambda_k}T^{b_{n_{k}}}\left(\oplus_{\ell_2}\frac{e_{0}^{p}}{p}\right)-\left(0,\ldots,0,e_{i}^{q},0,\ldots\right)\right\Vert_{\ell_2}\\
=&\left\Vert\left(\frac{q e_{b_{n_{k}}}^{2}}{ 2\lambda_k},\ldots,\frac{q e_{b_{n_{k}}}^{q-1}}{(q-1)\lambda_k},\frac{e_{b_{n_{k}}}^{q}}{\lambda_k}+\frac{1}{\lambda_k}P_k(T)\left(e_{0}^{q}\right)-e_{i}^{q},\frac{qe_{b_{n_{k}}}^{q+1}}{(q+1)\lambda_k},\ldots\right)\right\Vert_{\ell_2}&\\
=&\left\Vert\left(\frac{q e_{b_{n_{k}}}^{2}}{ 2\lambda_k},\ldots,\frac{q e_{b_{n_{k}}}^{q-1}}{(q-1)\lambda_k},\frac{e_{b_{n_{k}}}^{q}}{\lambda_k}+\varepsilon_k,\frac{qe_{b_{n_{k}}}^{q+1}}{(q+1)\lambda_k},\ldots\right)\right\Vert_{\ell_2}\underset{k\to+\infty}{\longrightarrow}0.& 
\end{align*}

This proves (\ref{P21}). So $T$ is supercyclic on $X$ without being strongly $h$-supercyclic for any $h\geq2$ proving Theorem \ref{supnonnsup}.

\begin{quest}
 Are strongly $n$-supercyclic operators also strongly $(n+1)$-supercyclic for $n\geq2$?
\end{quest}

\begin{quest}
 Does $T$ automatically satisfies the supercyclicity criterion if it is strongly $n$-supercyclic for any $n\geq1$? 
\end{quest}

 \scshape

\vglue0.3cm
\hglue0.02\linewidth\begin{minipage}{0.9\linewidth}
Romuald Ernst\\
{Laboratoire de~Math\'ematiques (UMR 6620)} de l'Universit\'e Blaise Pascal\\
Complexe universitaire des C\'ezeaux, 63171 Aubi\`ere Cedex, France \\
E-mail : \parbox[t]{0.5\linewidth}{\textit{Romuald.Ernst@math.univ-bpclermont.fr}}
\end{minipage}

\begin{thebibliography}{10}

\bibitem{Baymiso}
F.~Bayart.
\newblock m-isometries on Banach spaces.
\newblock {\em Math. Nachr.}, 284(17-18):2141--2147, 2011.

\bibitem{Bayhyp}
F.~Bayart and {\'E}.~Matheron.
\newblock Hyponormal operators, weighted shifts and weak forms of
  supercyclicity.
\newblock {\em Proc. Edinb. Math. Soc. (2)}, 49(1):1--15, 2006.

\bibitem{Bay}
F.~Bayart and {\'E}.~Matheron.
\newblock {\em Dynamics of linear operators}.
\newblock Cambridge tracts in mathematics. Cambridge University Press, 2009.

\bibitem{Ber}
T.~Berm{\'u}dez, I.~Marrero, and A.~Martin{\'o}n.
\newblock On the orbit of an {$m$}-isometry.
\newblock {\em Integral Equations Operator Theory}, 64(4):487--494, 2009.

\bibitem{Besper}
J.~B{\`e}s and A.~Peris.
\newblock Hereditarily hypercyclic operators.
\newblock {\em J. Funct. Anal.}, 167(1):94--112, 1999.

\bibitem{Bou}
P.~S. Bourdon, N.~S. Feldman, and J.~H. Shapiro.
\newblock Some properties of {$N$}-supercyclic operators.
\newblock {\em Studia Math.}, 165(2):135--157, 2004.

\bibitem{Delarosaread}
M.~De~la Rosa and C.~Read.
\newblock A hypercyclic operator whose direct sum {$T\oplus T$} is not
  hypercyclic.
\newblock {\em J. Operator Theory}, 61(2):369--380, 2009.

\bibitem{Ernstnsupstrongnsupop}
R.~Ernst.
\newblock {$n$}-supercyclic and strongly {$n$}-supercyclic operators in finite
  dimensions.
\newblock Studia Math. (to appear), arXiv: 1205.3575v3.

\bibitem{Felnsupop}
N.S. Feldman.
\newblock {$n$}-supercyclic operators.
\newblock {\em Studia Math.}, 151(2):141--159, 2002.

\bibitem{Grope}
K.-G. Grosse-Erdmann and A.Peris.
\newblock {\em Linear Chaos}.
\newblock Universitext Series. Springer, 2011.

\bibitem{Hilwal}
H.~M. Hilden and L.~J. Wallen.
\newblock Some cyclic and non-cyclic vectors of certain operators.
\newblock {\em Indiana Univ. Math. J.}, 23:557--565, 1973/74.

\bibitem{Kitai}
C.~Kitai.
\newblock {\em I{nvariant} {closed} {sets} {for} {linear} {operators}}.
\newblock ProQuest LLC, Ann Arbor, MI, 1982.
\newblock Thesis (Ph.D.)--University of Toronto (Canada).

\bibitem{Salas}
H.N. Salas.
\newblock Supercyclicity and weighted shifts.
\newblock {\em Studia Math.}, 135(1):55--74, 1999.

\bibitem{Shkauniv}
S.~Shkarin.
\newblock Universal elements for non-linear operators and their applications.
\newblock {\em J. Math. Anal. Appl.}, 348(1):193--210, 2008.

\bibitem{Shkachao}
S.~{Shkarin}.
\newblock {Chaotic Banach algebras}.
\newblock {\em ArXiv e-prints}, August 2010.

\bibitem{Singer}
I.~Singer.
\newblock {\em Bases in Banach spaces}.
\newblock Number vol.~1 in Grundlehren der mathematischen Wissenschaften in
  Einzeldarstellungen mit besonderer Ber{\"u}cksichtigung der
  Anwendungsgebiete. Springer-Verlag, 1970.
\end{thebibliography}
\end{document}